\newtheorem{theorem}{Theorem}[section]
\newtheorem{proposition}[theorem]{Proposition}
\newtheorem{lemma}[theorem]{Lemma}
\newtheorem{claim}[theorem]{Claim}
\newtheorem{corollary}[theorem]{Corollary}
\newtheorem{D}[theorem]{Definition}
\newenvironment{definition}{\begin{D} \rm }{\end{D}}
\newtheorem{R}[theorem]{Remark}
\newenvironment{remark}{\begin{R}\rm }{\end{R}}
\newtheorem{E}[theorem]{Example}
\newtheorem{A}[theorem]{Assumption}
\newenvironment{assumption}{\begin{A} \rm }{\end{A}}
\newcommand{\Dis}{\displaystyle}
\numberwithin{equation}{section}
\def\Zee{\mathbb{Z}}
\def\Q{\mathbb{Q}}
\def\Cee{\mathbb{C}}
\def\Pee{\mathbb{P}}
\def\Ker{\operatorname{Ker}}
\def\Ext{\operatorname{Ext}}
\def\Gr{\operatorname{Gr}}
\def\im{\operatorname{Im}}
\def\Spec{\operatorname{Spec}}
\def\scrO{\mathcal{O}}
\def\spcheck{^{\vee}}
\def\hX{\widehat{X}}
\def\hY{\widehat{Y}}
\def\uhX{\underline{X}^{\#}}
\def\uhY{\underline{Y}^{\#}}
\def\uP{\underline{WP}}
\def\uE{\underline{E}}
\def\uOb{\underline\Omega^\bullet}
\def\uOp{\underline\Omega^p}
\title{Deformations of   Calabi--Yau varieties with $k$-liminal singularities} 
\begin{document}
\author[R. Friedman]{Robert Friedman}
\address{Columbia University, Department of Mathematics, New York, NY 10027}
\email{rf@math.columbia.edu}
\author[R. Laza]{Radu Laza}
\address{Stony Brook University, Department of Mathematics, Stony Brook, NY 11794}
\email{radu.laza@stonybrook.edu}
\begin{abstract}
The goal of this paper is to describe certain nonlinear topological obstructions for the existence of first-order smoothings of mildly singular Calabi--Yau varieties of dimension at least $4$. For nodal Calabi--Yau threefolds,   a necessary and sufficient linear  topological condition for the existence of a first-order smoothing was first given  in \cite{F}. Subsequently, Rollenske--Thomas \cite{RollenskeThomas} generalized this picture to nodal Calabi--Yau varieties of odd dimension, by finding a necessary nonlinear topological condition for the existence of a first-order smoothing. In a complementary direction,  in \cite{FL}, the linear necessary and sufficient conditions of \cite{F} were extended to Calabi--Yau varieties in every dimension with $1$-liminal singularities (which are exactly the ordinary double points in dimension $3$ but not in higher dimensions).  In this paper, we give a common formulation of all of these previous results by establishing  analogues of the  nonlinear topological conditions of \cite{RollenskeThomas} for  Calabi--Yau varieties with weighted homogeneous $k$-liminal hypersurface singularities, a broad class of singularities that includes ordinary double points in odd dimensions.
\end{abstract}

\thanks{Research of the second author is supported in part by NSF grant DMS-2101640.}    
\bibliographystyle{amsalpha}
\maketitle

 \section*{Introduction}
 
The deformation theory of generalized Fano and Calabi--Yau threefolds with ordinary double points (or nodes), or more generally isolated canonical hypersurface singularities, has been extensively studied \cite{F}, \cite{namtop}, \cite{NS}, \cite{namstrata}, \cite{SteenbrinkDB}, \cite{gross_defcy}. This paper is part of a series \cite{FL, FL22b, FL22c, FL23b} which aims to  revisit and sharpen these results and explore generalizations to higher dimensions. A motivating question throughout  has been the problem of understanding the local structure of compactified  moduli spaces of  Calabi--Yau varieties. Let $Y$ be a generalized Calabi--Yau variety in a suitable sense (Definition~\ref{defnCY}).  Two natural questions arise:  (1) Is the first order deformation space of $Y$ unobstructed  (i.e.\  is the moduli space smooth at the point corresponding to $Y$)?  (2) If the  singularities of $Y$ are of some prescribed type,  is there a smoothing of $Y$, i.e.\  a proper flat morphism $\mathcal{Y} \to \Delta$ whose fiber over $0$ is isomorphic to $Y$ and whose general fiber is smooth?  

To put Question (1) in context,   the deformations of a Calabi--Yau manifold $Y$ are unobstructed by the Bogomolov-Tian-Todorov theorem. This result was generalized to the case where $Y$ is allowed to have ordinary double points by Kawamata, Ran, and Tian \cite{Kawamata},   \cite{Ran},  \cite{Tian}. In \cite{FL,FL22c},  this was further extended  to a much wider class of singularities,   \textsl{$1$-Du Bois singularities} (possibly non-isolated  starting in dimension $4$). Turning to Question (2), a natural class of singularities to consider are  isolated Gorenstein canonical (or equivalently rational) singularities. If the singularities are also local complete intersections, there are no local obstructions to smoothability. For isolated hypersurface singularities, there is a natural local condition on first order deformations, i.e.\ deformations over $\Spec \Cee[\varepsilon]$, which we call a \textsl{strong first order smoothing} (Definition~\ref{definestrloc}). This condition guarantees that any deformation of $Y$ over $\Delta$ whose associated first order deformation over $\Spec \Cee[\varepsilon]$ is a strong first order smoothing is a smoothing of $Y$.  If there is a first order deformation of $Y$ which is a strong first order smoothing at every singular point, and if in addition the deformation space of $Y$ is unobstructed, then Question (2) has a positive answer. 

Already in dimension $3$, there is a somewhat paradoxical aspect to Question (2): the ``more rational" the singularities of $Y$, the harder it is to decide if $Y$ is smoothable to first order.  In dimension $3$, this corresponds to the fact that there is a certain (linear) topological constraint in order for the ordinary double points of $Y$ to be smoothable \cite{F}, while no such constraint exists for more complicated rational hypersurface singularities  \cite{NS}, \cite{FL}. In higher dimensions, this phenomenon is even more striking: if $Y$ has rational hypersurface singularities  which are ``not too rational" (not $1$-Du Bois), then  $Y$ is smoothable at least to first order \cite{FL}, but these methods do not apply if the singularities are ``too rational" ($1$-rational).    

A framework for understanding these results  is the theory of \emph{higher Du Bois} and \emph{higher rational singularities}.     Musta\c{t}\u{a}, Popa, Saito along with their collaborators and the authors   have  introduced the notion of \textsl{$k$-Du Bois}  and  \textsl{$k$-rational} singularities for a complex algebraic variety $X$ (for $0\le k\le \dim X$), extending the usual notions of Du Bois  and rational  singularities respectively  (which correspond to the case $k=0$)  \cite{MOPW}, \cite{JKSY-duBois}, \cite{FL22c}. If $X$ has local complete intersection (lci) singularities, then   $k$-rational $\implies$ $k$-Du Bois $\implies$ $(k-1)$-rational \cite{ChenDirksM}, \cite{FL22c}, \cite{FL22d}.  Thus, as $k$ increases, the singularities become  milder:  A  local complete intersection singularity which is $k$-Du Bois with $k > \frac12(\dim X-1)$ is smooth, and it is an ordinary double point if $k = \frac12(\dim X-1)$.   Varieties with $k$-rational and $k$-Du Bois singularities satisfy various vanishing and non-vanishing results (e.g.\ \cite{S-Van}, \cite{SteenbrinkDB}, \cite{MP-Inv}, \cite{FL22d}), which in turn are closely related to the deformation theory of Calabi--Yau varieties in case $k=1$ \cite{FL}. In particular, the deformation theory of $Y$ is especially well-behaved when the singularities are $1$-Du Bois  but not $1$-rational. In this case, Question (1) has a positive answer and, for Question (2),  there is a necessary and sufficient condition for the existence of a strong first order smoothing in  case the singular points of $Y$  are isolated hypersurface singularities. 

As noted above,  the  methods of \cite{FL} unfortunately say nothing about the answer to Question (2) if the singularities are $1$-rational. 
On the positive side,  for odd-dimensional Calabi--Yau varieties $Y$ (of dimension at least $5$) with only ordinary double points, Rollenske and Thomas found  a  \emph{non-linear} obstruction  to the existence of a first-order smoothing of $Y$ \cite{RollenskeThomas}, which we state more precisely below (Theorem~\ref{RTthm}).  To find an appropriate generalization of this result, we make the following definition  
 \cite[Definition 6.10]{FL22d},  Definition~\ref{def0.3}:  An  isolated hypersurface singularity is \textsl{$k$-liminal}  if it is $k$-Du Bois, but not $k$-rational.   In dimension $3$,  the only $1$-liminal singularities are ordinary double points.  More generally  in odd dimension $2k+1$,  the only $k$-liminal singularities are ordinary double points. However, ordinary double points in even dimensions are not $k$-liminal for any value of $k$.  By Lemma~\ref{ex0.4.1} below, for every $n \ge 3$, there exist $k$-liminal singularities of dimension $n$ $\iff$ $0\le k \le \Dis \left[\frac{n-1}{2}\right]$. In particular, for every $n \ge 3$, there exist $k$-liminal singularities of dimension $n$ for some  $k\ge 1$.  Thus, $k$-liminal singularities are important boundary/transition cases and are a far-reaching  generalization of ordinary double points in  odd dimensions. 

Since  the ordinary double points are exactly the  $k$-liminal lci singularities in dimension $2k+1$, the Rollenske--Thomas  theorem can then be rephrased as follows:  If $Y$ is a Calabi--Yau variety of dimension $n=2k+1$ with only $k$-liminal lci singularities, there is a  topological obstruction   to the existence of a strong first order smoothing of $Y$ (i.e.\ a necessary  condition for the existence of a strong first order smoothing) which is (roughly) $k$-linear.  (In dimension $3$, the obstruction is a linear condition, and it is also  sufficient \cite{F}.) The main result of this paper is a generalization of the Rollenske--Thomas theorem to the case where $Y$ is a Calabi--Yau variety with isolated hypersurface weighted homogeneous $k$-liminal singularities. \\

To  explain our results in more detail, we begin with the following definition:   

 \begin{definition}\label{defnCY} A \textsl{canonical Calabi--Yau variety $Y$} is a   compact analytic variety $Y$  with at worst canonical Gorenstein (or equivalently rational Gorenstein) singularities, such that $\omega_Y\cong \scrO_Y$, and such that either $Y$ is a scheme or $Y$ has only isolated singularities and the $\partial\bar\partial$-lemma holds for some resolution of $Y$.
\end{definition} 
 
For a    compact analytic variety $Y$ with at worst ordinary double point singularities, recall that a \textsl{first-order deformation of $Y$} is a flat proper morphism $f\colon \mathcal{Y} \to \Spec \Cee[\epsilon]$, together with an isomorphism from the fiber over $0$ to $Y$, and these are classified by $\mathbb{T}_Y^1 = \Ext^1(\Omega^1_Y, \scrO_Y)$. Given a class $\theta \in   \mathbb{T}_Y^1$, its image in $H^0(Y; T^1_Y) = \bigoplus_{x\in Y_{\text{\rm{sing}}}}T^1_{Y,x}$ measures the first-order change to the singularities of $Y$, and $\theta$ is a \textsl{first-order smoothing of $Y$} if the image of $\theta$ in $T^1_{Y,x}\cong \Cee$ is nonzero for every $x\in Y_{\text{\rm{sing}}}$.  Then by \cite[\S4]{F} (also \cite[Prop. 8.7]{FriedmanSurvey}), we have: 
 
 \begin{theorem}\label{dim3criterion} Suppose that $Y$ is a canonical Calabi--Yau variety of dimension $3$ whose only singularities are ordinary double points.  Let $\pi\colon Y'\to Y$ be a small resolution of the singularities of $Y$, so that $\pi^{-1}(x) =C_x\cong \Pee^1$ for every $x\in Y_{\text{\rm{sing}}}$, and let $[C_x]$ be the fundamental class of $C_x$ in $H^2(Y'; \Omega^2_{Y'})$. Then a first-order smoothing of $Y$ exists $\iff$ there exist $a_x\in \Cee$, $a_x \neq 0$ for every $x$, such that  $\sum_{x\in Y_{\text{\rm{sing}}}} a_x[C_x] =0$ in $H^2(Y'; \Omega^2_{Y'})$.
\end{theorem}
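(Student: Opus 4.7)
The plan is to apply the local-to-global $\Ext$ spectral sequence for $\Ext^*(\Omega^1_Y,\scrO_Y)$. Because ordinary double points are hypersurface singularities, the sheaves $T^i_Y := \mathcal{E}xt^i(\Omega^1_Y,\scrO_Y)$ vanish for $i\geq 2$, and the resulting five-term exact sequence reads
\[
0 \to H^1(Y;T_Y) \to \mathbb{T}^1_Y \xrightarrow{\rho} H^0(Y;T^1_Y) \xrightarrow{\partial} H^2(Y;T_Y).
\]
A class $\eta = (a_x) \in H^0(Y;T^1_Y) = \bigoplus_{x\in Y_{\text{\rm{sing}}}} T^1_{Y,x}$ lifts to $\mathbb{T}^1_Y$ precisely when $\partial(\eta) = 0$, and such a lift is a first order smoothing exactly when every component $a_x\in\Cee$ is nonzero. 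It therefore suffices to identify the obstruction map $\partial$ with the linear map $e_x \mapsto [C_x]$, up to a common nonzero scalar, landing in $H^2(Y';\Omega^2_{Y'})$.

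The first step in this identification is to translate $H^2(Y;T_Y)$ into cohomology on the small resolution $Y'$. Since $\omega_Y\cong \scrO_Y$ and $Y$ is Gorenstein of dimension $3$, contraction with a trivializing section of $\omega_Y$ gives a canonical isomorphism of reflexive sheaves $T_Y \cong \Omega^{[2]}_Y$, where $\Omega^{[2]}_Y$ denotes the reflexive hull. For the small resolution $\pi\colon Y'\to Y$ of threefold nodes, a standard local calculation, using that each exceptional curve $C_x$ has normal bundle $\scrO(-1)^{\oplus 2}$ in $Y'$, shows that $\pi_*\Omega^2_{Y'} = \Omega^{[2]}_Y$ and $R^i\pi_*\Omega^2_{Y'}=0$ for $i>0$. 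Combined with the Leray spectral sequence, this yields the desired identification $H^2(Y;T_Y)\cong H^2(Y';\Omega^2_{Y'})$.

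The main step, and the one I expect to be the principal technical obstacle, is to show that under this identification the connecting map $\partial$ sends the generator $e_x\in T^1_{Y,x}\cong \Cee$ to a nonzero scalar multiple of $[C_x]$. The strategy is to factor $\partial$ through local cohomology,
\[
T^1_{Y,x} \longrightarrow H^2_{C_x}(Y';\Omega^2_{Y'}) \longrightarrow H^2(Y';\Omega^2_{Y'}),
\]
and to perform the computation on a Stein neighborhood $U_x$ of the node, where the geometry reduces to the standard small resolution of the conifold $\{z_1z_2 - z_3z_4 = 0\}$. On such a neighborhood the local deformation space splits explicitly, and the extension class produced by the boundary map is visibly the fundamental class of $C_x$ in local cohomology; its image under the forgetful map into $H^2(Y';\Omega^2_{Y'})$ is then $[C_x]$.

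Combining these three steps, the existence of a first order smoothing of $Y$ is equivalent to the existence of coefficients $a_x\in\Cee$, with every $a_x\neq 0$, such that $\sum_{x\in Y_{\text{\rm{sing}}}} a_x[C_x]=0$ in $H^2(Y';\Omega^2_{Y'})$, as claimed.
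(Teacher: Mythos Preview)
Your approach is correct and is essentially the classical argument from Friedman's original paper \cite{F}, which is exactly what the present paper cites for this result (the paper does not give a self-contained proof of Theorem~\ref{dim3criterion}). The paper's own methodology, however, as exhibited in the proof of the more general Theorem~\ref{1limthm} (covering all $1$-liminal hypersurface singularities in dimension $n\ge 3$), takes a genuinely different route: instead of a small resolution $Y'$, it works on a log resolution $\pi\colon\hY\to Y$ with exceptional divisor $E$, identifies $\mathbb{T}^1_Y\cong H^1(\hY;\Omega^{n-1}_{\hY}(\log E))$ via Lemma~\ref{0.2.1}, and then reads off the obstruction from the restriction--coboundary sequence
\[
H^1(\hY;\Omega^{n-1}_{\hY}(\log E))\to H^1(E;\Omega^{n-1}_{\hY}(\log E)|E)\xrightarrow{\partial} H^2(\hY;\Omega^{n-1}_{\hY}(\log E)(-E)),
\]
together with the injectivity of Lemma~\ref{lemma3.3}. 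Your small-resolution argument is the more direct one for threefold nodes specifically --- no log poles, and the class $[C_x]$ appears on the nose --- but it relies on the existence of a small resolution, which is special to dimension $3$; the log-resolution framework is what allows the paper to push the story to $k$-liminal singularities in all dimensions.

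One small point of wording: the scalar relating $\partial(e_x)$ to $[C_x]$ need not be \emph{common} across the singular points --- it depends on the choice of generator $e_x\in T^1_{Y,x}$ at each $x$ --- but since the criterion only asks that every $a_x$ be nonzero, this is harmless. Also, your sketch correctly flags the local computation (that the image of $e_x$ in $H^2_{C_x}(Y';\Omega^2_{Y'})$ is a nonzero multiple of the cycle class) as the technical crux; this is indeed a genuine calculation with the $(-1,-1)$-curve, not a formality, so in a full write-up you should carry it out.
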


Next we  describe the partial extension of Theorem~\ref{dim3criterion}  to all odd dimensions $n = 2k+1 \ge 3$ due to Rollenske--Thomas. For $n > 3$, there is no small resolution of an ordinary double point. Instead, consider the standard blowup of a node.   The exceptional divisor is an even dimensional quadric, whose primitive cohomology is generated by  the difference $[A]-[B]$, where $A$ and $B$ are two complementary linear spaces of dimension $k$ such that $A\cdot B =1$. For $Y$ a projective variety of dimension $2k+1$ whose only singular points are nodes and $\pi: \hY \to Y$ a standard resolution as above,   for each $x\in Y_{\text{\rm{sing}}}$, there is thus a class $[A_x]-[B_x]\in H^{k+1}(\hY;\Omega^{k+1}_{\hY})$.  The following is equivalent to the necessity part of  Theorem~\ref{dim3criterion} in dimension $3$ and generalizes it   to all odd dimensional nodal canonical Calabi--Yau varieties \cite{RollenskeThomas}:

\begin{theorem} \label{RTthm} Suppose that $Y$ is a canonical Calabi--Yau variety of odd dimension $n=2k+1$ whose only singularities are ordinary double points and let $\hY \to Y$ be a standard resolution as above. Then there exist identifications   $T^1_{Y,x}\cong \Cee$  such that the following holds: If $\theta$ is a  first-order smoothing of $Y$ with  image in $T^1_{Y,x}$ equal to $\lambda_x\in \Cee$ via the above  isomorphisms $T^1_{Y,x}\cong \Cee$, then, with notation as above,  
 \begin{equation}\label{eqn1}
 \sum_{x\in Y_{\text{\rm{sing}}}} \lambda_x^k ([A_x]-[B_x])=0 
 \end{equation} 
   in $H^{k+1}(\hY;\Omega^{k+1}_{\hY})$.
\end{theorem}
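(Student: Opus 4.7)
The plan is to form the $k$-fold iterated Kodaira--Spencer class of $\theta$, apply Serre duality using $\omega_Y\cong\scrO_Y$, and convert the condition that $\theta$ is a genuine global Ext-class into a cycle vanishing on $\hY$ via a local pairing calculation at the exceptional quadrics.

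First I would fix normalizations. Near each node $x$, a local analytic isomorphism $(Y,x)\cong(\{\sum_{i=1}^{n+1}z_i^2=0\},0)$ pins down $T^1_{Y,x}\cong\Cee$ so that the standard smoothing direction $\sum z_i^2=\epsilon$ corresponds to $1\in\Cee$; then the given first order smoothing $\theta$ acquires nonzero local components $\lambda_x\in\Cee$. On $\hY$, the exceptional divisor over $x$ is the smooth quadric $Q_x\cong\{[z]\in\Pee^n:\sum z_i^2=0\}$ of dimension $2k$, whose primitive middle cohomology is spanned by $[A_x]-[B_x]$, viewed as a class in $H^{k+1}(\hY;\Omega^{k+1}_{\hY})$ via Gysin pushforward.

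The main construction is the iterated Kodaira--Spencer class $\theta^k\in\Ext^k(\Omega^k_Y,\scrO_Y)$, obtained from the $k$-fold cup product of $\theta$ composed with the canonical map $\Omega^k_Y\to(\Omega^1_Y)^{\otimes k}$. A direct calculation in the local model $\sum z_i^2=0$ shows that the image of $\theta^k$ in the stalk $\mathcal{E}xt^k(\Omega^k_Y,\scrO_Y)_x$ is $\lambda_x^k$ times a canonical local generator. Serre duality on $Y$ (using $\omega_Y\cong\scrO_Y$ and $n-k=k+1$) identifies $\Ext^k(\Omega^k_Y,\scrO_Y)$ with $H^{k+1}(Y;\Omega^k_Y)^\ast$, and transferring the pairing to $\hY$ via the Du Bois comparison $\uOb_Y\to R\pi_\ast\Omega^\bullet_{\hY}$ (available because our singularities are rational Gorenstein) matches the Serre dual of the canonical local generator at $x$ with precisely the primitive cycle $[A_x]-[B_x]$, while the generators at the other nodes pair to zero.

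Putting the pieces together, globality of $\theta\in\Ext^1(\Omega^1_Y,\scrO_Y)$ — as opposed to $(\lambda_x)$ being merely a collection of local sections of $T^1_Y$ — promotes $\theta^k$ to a global class in $\Ext^k(\Omega^k_Y,\scrO_Y)$, and via the identifications of the preceding paragraph this forces the associated cycle class $\sum_x\lambda_x^k([A_x]-[B_x])$ to vanish in $H^{k+1}(\hY;\Omega^{k+1}_{\hY})$. The principal obstacle is the local pairing computation: checking that, under Serre duality transferred to $\hY$, the canonical generator of $\mathcal{E}xt^k(\Omega^k_Y,\scrO_Y)_x$ is dual to exactly the primitive cycle $[A_x]-[B_x]$, and that the resulting coefficient works out to $\lambda_x^k$ rather than some other power. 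This is a parity-sensitive calculation reflecting the fact that only in odd dimension $n=2k+1$ does $Q_x$ carry a nontrivial primitive middle-dimensional class on which to detect the obstruction; in even dimension the analogous picture degenerates, which is why the Rollenske--Thomas criterion takes this form precisely for odd $n$.
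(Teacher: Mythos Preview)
Your proposal begins in the right place—forming the $k$-fold product $\theta^k\in\Ext^k(\Omega^k_Y,\scrO_Y)$ is indeed the nonlinear construction one needs—but the step where Serre duality is supposed to ``force the associated cycle class to vanish'' is not a valid inference as written. Serre duality identifies $\Ext^k(\Omega^k_Y,\scrO_Y)$ with the \emph{dual} of $H^{k+1}(Y;\Omega^k_Y)$, so $\theta^k$ becomes a linear functional, not a class in $H^{k+1}(\hY;\Omega^{k+1}_{\hY})$. Knowing that this functional is globally defined, and that its image in $H^0(\mathcal{E}xt^k(\Omega^k_Y,\scrO_Y))$ is $(\lambda_x^k g_x)$, does not by itself yield $\sum_x\lambda_x^k([A_x]-[B_x])=0$: you never say what the functional is paired against, nor why that pairing annihilates precisely this cycle. (You also assume without argument that $\mathcal{E}xt^k(\Omega^k_Y,\scrO_Y)_x$ is one-dimensional; this is true for a node but is itself a computation.) In short, ``globality of $\theta^k$'' plus Serre duality does not produce the cycle relation; what is really needed is a map out of $H^0(\mathcal{E}xt^k)$ whose kernel contains the image of the global $\Ext$, together with an identification of that map with $g_x\mapsto [A_x]-[B_x]$.

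The paper (following Rollenske--Thomas, whose outline it generalizes) supplies exactly that, but via an exactness argument on the resolution rather than duality on $Y$. One identifies $\mathbb{T}^1_Y\cong H^1(\hY;\Omega^{n-1}_{\hY}(\log E)(-aE))$ for a suitable twist $a$, so that the $k$-fold wedge/cup product lands in $H^k(\hY;\Omega^{n-k}_{\hY}(\log E))$. Restricting to $E$ produces the local classes $c_x\lambda_x^k\varepsilon_x\in H^k(E;\Omega^{n-k}_{\hY}(\log E)|E)$; since these are, by construction, in the image of the restriction map, they lie in the kernel of the coboundary
\[
\partial\colon H^k(E;\Omega^{n-k}_{\hY}(\log E)|E)\longrightarrow H^{k+1}(\hY;\Omega^{n-k}_{\hY}(\log E)(-E)),
\]
and mapping on to $H^{k+1}(\hY;\Omega^{n-k}_{\hY})=H^{k+1}(\hY;\Omega^{k+1}_{\hY})$ identifies $\partial(\varepsilon_x)$ with $[A_x]-[B_x]$. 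The local computation you flag as the ``principal obstacle'' is genuinely the crux (it is the content of Proposition~\ref{0.9} and Claims~\ref{0.10}--\ref{0.11}), but the global packaging is a three-term exact sequence on $\hY$, not a Serre-duality pairing on $Y$.
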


 We can interpret Theorem~\ref{RTthm} in the following way. First, if there exists a first-order smoothing of $Y$, then the classes $[A_x]-[B_x]$ are not linearly independent in $H^{k+1}(\hY;\Omega^{k+1}_{\hY})$, and in fact satisfy a linear relation whose coefficients are all nonzero. Second, the image of $\mathbb{T}_Y^1$ in $H^0(Y; T^1_Y)$, which is a vector subspace of $H^0(Y; T^1_Y)$, is contained in the subvariety of $H^0(Y; T^1_Y)$ defined by the nonlinear Equation~\ref{eqn1}, which is roughly speaking an intersection of affine varieties of Fermat type.  
 
 \medskip
 
 The goal of this paper is to   generalize Theorem~\ref{RTthm}.  To state the result, let $Y$ be as before a compact analytic variety with isolated singularities.  If $x\in Y_{\text{\rm{sing}}}$ is a singular point, let $\pi \colon \hY \to Y$ be some log resolution of $Y$ and let $E_x =\pi^{-1}(x)$ be the exceptional divisor over $x$. In the case of ordinary double points,   $\dim T^1_{Y, x} = 1$ for a singular point and   there are distinguished classes $[A_x]-[B_x] \in H^{k+1}(\hY;\Omega^{k+1}_{\hY})$ which are defined locally around the singular points. In general, $\dim T^1_{Y,x}\neq 1$, so we must define  the types of smoothings to which our methods will apply: 

\begin{definition}\label{definestrloc}  Let $(X,x)$ be the germ of an isolated hypersurface singularity, so that $T^1_{X,x} = \scrO_{X,x}/J$ is a cyclic $\scrO_{X,x}$-module. Thus $\dim T^1_{X,x}/\mathfrak{m}_xT^1_{X,x} = 1$. Then an element  $\theta_x\in T^1_{X,x}$ is a \textsl{strong first-order smoothing} if $\theta_x\notin \mathfrak{m}_xT^1_{X,x}$.  In case $x$ is an ordinary double point, $\theta_x\in T^1_{X,x}$ is a  strong first-order smoothing $\iff$ $\theta_x\neq 0$. For a compact $Y$ with only isolated hypersurface singularities, a first-order deformation $\theta\in \mathbb{T}^1_Y$ is a \textsl{strong first-order smoothing} if the image $\theta_x$ of $\theta$ in $T^1_{X,x}$ is a strong first-order smoothing for every $x\in Y_{\text{\rm{sing}}}$. A standard argument (e.g.\ \cite[Lemma 1.9]{FL}) shows that,  if  $f\colon \mathcal{Y}\to \Delta$ is a deformation of $Y$ over the disk, then its Kodaira-Spencer class $\theta$ is a strong first-order smoothing $\iff$ $\mathcal{Y}$ is smooth and in particular  the nearby fibers $Y_t =f^{-1}(t)$, $0< |t| \ll 1$, are smooth.
\end{definition} 

\begin{remark}\label{1DBsmooths} For $k \ge 1$, a  $k$-liminal singularity is in particular $1$-Du Bois. Hence, by \cite[Corollary 1.5]{FL22c}, a canonical Calabi--Yau variety $Y$ with  only isolated $k$-liminal hypersurface singularities has unobstructed deformations. 
 In particular if there exists a strong first-order smoothing of $Y$, then $Y$ is smoothable. 
\end{remark}

 To deal with the correct generalization of the class $[A_x]-[B_x]$, recall that, for each $x\in Y_{\text{\rm{sing}}}$ (assumed throughout to be an isolated hypersurface singularity), 
 we have the corresponding link $L_x$ at $x$.  There is a natural mixed Hodge structure on $H^\bullet(L)$ (see e.g.\ \cite[\S6.2]{PS}). Moreover, for all $k$, there is a natural map 
 \begin{equation}\varphi\colon \Gr^{n-k}_FH^n(L_x) \to H^{k+1}(\hY;\Omega^{n-k}_{\hY})\end{equation}  given as the composition
\begin{gather*}
\Gr^{n-k}_FH^n(L_x) =H^k(E_x;\Omega^{n-k}_{\hY}(\log E_x)|E_x) \to \Gr^{n-k}_FH^{n+1}_E(\hY) = H^k(E_x;\Omega^{n-k}_{\hY}(\log E_x)/\Omega^{n-k}_{\hY})\\
\xrightarrow{\partial}  H^{k+1}(\hY;\Omega^{n-k}_{\hY}).
\end{gather*}
In case there is a Hodge decomposition for $\hY$ (for example if $\hY$ is  K\"ahler or more generally satisfies the $\partial\bar\partial$-lemma), the above maps are consistent in the obvious sense with the topological maps
$$H^n(L_x) \to H^{n+1}_{E_x}(\hY) \to H^{n+1}(\hY),$$
where via Poincar\'e duality the map $H^n(L_x) \to  H^{n+1}(\hY)$ is the same as the natural map  $H_{n-1}(L_x) \to  H_{n-1}(\hY)$. In the special case where $x$ is an ordinary double point and $n=2k+1$,  $\dim H^n(L_x) = 1$,  so that $H^n(L_x) =\Cee \varepsilon_x$ for some $\varepsilon_x \in H^n(L_x)$, 
and, for an appropriate choice of  $\varepsilon_x$,
 $\varphi(\varepsilon_x) = [A_x]-[B_x]\in H^{k+1}(\hY;\Omega^{n-k}_{\hY}) =  H^{k+1}(\hY;\Omega^{k+1}_{\hY})$.  
\medskip

 The link of a $k$-liminal singularity is formally  analogous to that  of an ordinary double point  in odd dimensions, by  the following result,  essentially due to  Dimca-Saito \cite[\S4.11]{DS} (cf.\ also \cite[Corollary 6.14]{FL22d}):

\begin{theorem}\label{introlink}  If $(X,x)$ is the germ of an isolated $k$-liminal hypersurface singularity and $L$ is the corresponding link, then $\dim \Gr^{n-k}_FH^n(L) =1$.
\end{theorem}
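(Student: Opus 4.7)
The strategy is to reduce the computation of $\Gr^{n-k}_FH^n(L)$ to a spectrum computation for the isolated hypersurface singularity $(X,x)$, following Dimca-Saito, and then to match that spectrum condition with the defining property of $k$-liminality. My first step would be to express $H^n(L)$ in terms of Milnor fiber data. For an isolated hypersurface singularity with Milnor fiber $F$ and monodromy $T$, the Wang exact sequence together with the $(n-1)$-connectedness of $F$ produces a four-term exact sequence of mixed Hodge structures
\begin{equation*}
0 \to H^n(L) \to H^n(F)^T \xrightarrow{N} H^n(F)^T(-1) \to H^{n+1}(L) \to 0,
\end{equation*}
where $N$ is the logarithm of the unipotent part of $T$ acting on the $T$-invariant vanishing cohomology, and all maps respect Steenbrink's limit Hodge filtration. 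Passing to the Hodge graded pieces identifies $\Gr^{n-k}_FH^n(L)$ with the kernel of $N$ on $\Gr^{n-k}_FH^n(F)^T$, sitting inside the eigenvalue-$1$ component $\Gr^{n-k}_FH^n(F)_1$ of the vanishing cohomology.

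Second, I would invoke the Dimca-Saito spectrum formula \cite[\S4]{DS}: for an isolated hypersurface singularity $f$ of dimension $n$, the dimension of $\Gr^{n-k}_FH^n(F)_1$ equals the multiplicity $m_{k+1}$ of the integer spectrum number $\alpha=k+1$ in the singularity spectrum $\mathrm{Sp}(f,x)$. For weighted homogeneous $f$, the monodromy is semisimple, so $N=0$, the four-term sequence degenerates, and one reads off $\dim \Gr^{n-k}_FH^n(L) = m_{k+1}$ immediately. In general, one must also check using the Hodge-theoretic symmetries of $N$ on the associated weight-graded pieces that $N$ vanishes on the specific graded piece of interest, so that the same conclusion holds.

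Third, I would translate the $k$-liminal condition of \cite[Definition 6.10]{FL22d}, originally formulated via the Du Bois complex (or equivalently via the Hodge filtration on the link's MHS), into the numerical spectrum condition $m_{k+1}=1$. This translation is the essential content of \cite[Corollary 6.14]{FL22d}, and combined with the previous two steps it immediately yields $\dim \Gr^{n-k}_FH^n(L)=1$. The main obstacle, where I expect the real work to lie, is precisely this matching: the Du Bois complex formulation of $k$-liminality and the single-spectrum-number condition $m_{k+1}=1$ are not manifestly identical. One must verify carefully that $k$-liminality extracts exactly the one-dimensional contribution at the integer spectrum value $k+1$ and not some other contribution (from non-integer spectrum values contributing to different eigenspaces, or from the image of the variation map), and one must ensure compatibility between the Hodge filtration on $H^n(L)$ coming from the link and the one coming via Steenbrink's limit construction on the vanishing cohomology.
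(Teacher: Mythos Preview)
The paper does not give its own proof of this theorem: it is stated in the introduction with the attribution ``essentially due to Dimca-Saito \cite[\S4.11]{DS} (cf.\ also \cite[Corollary 6.14]{FL22d})'' and is simply invoked thereafter (e.g.\ in Definition~\ref{def0.3}). Your proposal is therefore not being compared against an in-paper argument but against the cited literature, and in that sense your outline is on target: the identification of $\Gr^{n-k}_FH^n(L)$ with a spectrum multiplicity is exactly the Dimca--Saito input, and the translation of $k$-liminality into the condition that the minimal exponent equals $k+1$ (hence that $k+1$ occurs in the spectrum with multiplicity one) is precisely what \cite[Corollary 6.14]{FL22d} supplies.

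Two technical points deserve tightening. First, your four-term sequence is not literally the Wang sequence: the Wang sequence for the Milnor fibration computes the cohomology of the complement $S_\epsilon\setminus L$, not of $L$, and the map is $T-1$ on all of $H^n(F)$ rather than $N$ on invariants. What you want instead is the exact sequence relating $H^\bullet(L)$ to the unipotent part of the vanishing cohomology coming from the nearby-cycles formalism (or equivalently from the identification of $H^\bullet(L)$ with $H^\bullet_{\{x\}}(X)$ and the comparison with $\psi_f$, $\phi_f$); this is the sequence that is compatible with the Hodge filtrations and for which the relevant map is indeed $N$ on the eigenvalue-$1$ part. Second, you are right to flag the step where one must rule out contributions from $\operatorname{im} N$ in the non-quasihomogeneous case; this is handled in the references via the fact that the minimal spectral number contributes a one-dimensional piece sitting at the edge of the Hodge filtration, where $N$ necessarily vanishes for weight reasons. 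Since the paper itself defers entirely to \cite{DS} and \cite{FL22d} for these details, your sketch is an accurate reconstruction of what is being cited.
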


 For $1$-liminal  singularities, we showed   \cite[Lemma 5.6, Corollary 5.12]{FL}  that there is a necessary and sufficient linear condition   for there to exist a strong first-order smoothing of $Y$, and hence an actual smoothing by Remark~\ref{1DBsmooths}. 
  This statement (see Theorem~\ref{1limthm} below for a precise version) can be viewed as a  natural generalization of Theorem~\ref{dim3criterion}.  The main results of this paper, Theorem~\ref{mainthma} and Corollary~\ref{last},  are then   further generalizations which apply to all  weighted homogeneous $k$-liminal singularities. However, as  in Theorem~\ref{RTthm}, we are only able to obtain \emph{necessary} conditions for $k \ge 2$: 

\begin{theorem}\label{intromain}  Let $Y$ be a canonical Calabi--Yau variety of dimension $n$ with isolated $k$-liminal  weighted homogeneous hypersurface singularities and $k\ge 1$. For each singular point $x\in Y$, let $L_x$ be the link at $x$ and write $\Gr^{n-k}_FH^n(L_x) = H^k(E_x; \Omega^{n-k}_{\hY}(\log E_x)|E_x) =\Cee\cdot \varepsilon_x$ for some choice of a generator $\varepsilon_x$.  Let $\varphi\colon \Gr^{n-k}_FH^n(L) =\bigoplus_{x\in Y_{\text{\rm{sing}}}}\Gr^{n-k}_FH^n(L)\to H^{k+1}(\hY; \Omega^{n-k}_{\hY})$ be the natural map. 

Finally, for  each $x\in Y_{\text{\rm{sing}}}$, fix an identification $T_{Y,x}^1/\mathfrak{m}_x T_{Y,x}^1\cong \Cee$.  Then, for each $x\in Y_{\text{\rm{sing}}}$ there exist $c_x\in \Cee^*$ with the following property: If $\theta \in \mathbb{T}^1_Y$ induces $\lambda_x\in \Cee$, then 
$$\sum_{x\in Y_{\text{\rm{sing}}}} c_x\lambda_x^k\varphi(\varepsilon_x) =0 \in H^{k+1}(\hY; \Omega^{n-k}_{\hY}).$$
In particular, if a strong first-order smoothing of $Y$ exists, then the classes $\varphi(\varepsilon_x)$ are not linearly independent.
\end{theorem}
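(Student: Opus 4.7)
The plan is to adapt the Rollenske--Thomas strategy to the weighted homogeneous $k$-liminal setting, with the ``$k$-th power'' emerging from an iteration forced by the weighted homogeneous structure. By Remark~\ref{1DBsmooths}, deformations of $Y$ are unobstructed, so one can lift $\theta$ to a formal one-parameter family $\mathcal{Y} \to \Spec \Cee[[t]]$. I would then pull this family back to the log resolution $\hY$ and track the induced variation of the log differentials $\Omega^\bullet_{\hY/Y}(\log E)$, obtaining a sequence of higher-order Kodaira--Spencer classes. The Calabi--Yau hypothesis $\omega_Y \cong \scrO_Y$ provides a nowhere-vanishing $n$-form $\Omega$, and contracting the KS classes against $\Omega$ produces classes valued in $\Omega^\bullet_{\hY}(\log E)$. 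The $k$-fold iterated operation (Yoneda/cup product of $k$ copies of $\theta$, followed by contraction with $\Omega$) naturally lands in $H^{k+1}(\hY; \Omega^{n-k}_{\hY})$; this class must vanish because the extension through order $k$ exists globally.

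The heart of the argument is the local computation at each singular point $x$. Using that $(Y,x)$ is weighted homogeneous with weights adapted to $k$-liminality, one writes down explicit generators of the Jacobian algebra, which is cyclic with generator reducing to a canonical class modulo $\mathfrak{m}_x$. The $\Cee^*$-action of the weighted homogeneous structure splits the Hodge filtration on the Milnor fiber, allowing an explicit identification (in the style of Steenbrink--Dimca--Saito) of $\Gr^{n-k}_F H^n(L_x) = \Cee \cdot \varepsilon_x$ with a one-dimensional weight eigenspace in the Jacobian ring. The $k$-th symmetric power of $\theta_x \in T^1_{X,x}$ then reduces modulo $\mathfrak{m}_x$ to $\lambda_x^k$ times a canonical generator, and this generator is matched with $\varepsilon_x$ up to the nonzero scalar $c_x$ (depending only on the singularity germ, not on $\theta$) through the comparison of the two natural maps from deformation theory to link cohomology.

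To glue local to global, I would use the long exact sequence of local cohomology supported on $E = \bigsqcup E_x$ together with the definition of $\varphi$ as the composition $\Gr^{n-k}_F H^n(L_x) \to \Gr^{n-k}_F H^{n+1}_{E_x}(\hY) \to H^{k+1}(\hY;\Omega^{n-k}_{\hY})$. The image of the $k$-th order obstruction class under the connecting homomorphism $\partial$ is, locally around $x$, exactly $c_x \lambda_x^k \varphi(\varepsilon_x)$; globally the class vanishes by the extension argument of the first paragraph, which gives the desired relation.

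The main obstacle will be the local identification in the second paragraph: showing that iterated Kodaira--Spencer applied to a weighted homogeneous $k$-liminal deformation really produces $c_x \lambda_x^k$ (with $c_x \neq 0$) times the canonical generator $\varepsilon_x$ of $\Gr^{n-k}_F H^n(L_x)$. This requires a careful compatibility between three pieces of structure: the cyclic module structure on $T^1_{X,x}$, the Hodge filtration on the link (as in Theorem~\ref{introlink} via Dimca--Saito), and the log-resolution geometry used to define $\varphi$. The $k$-liminal hypothesis is what makes the target one-dimensional and the relevant weight-eigenspace of the Jacobian algebra nonzero, so the nonvanishing of $c_x$ reduces precisely to the content of Theorem~\ref{introlink}.
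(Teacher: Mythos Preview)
Your overall shape is in the spirit of Rollenske--Thomas and of the paper, but the paper's execution differs from yours in two significant ways. First, the paper works entirely at first order and never invokes unobstructedness or a formal lift: it identifies $\mathbb{T}^1_Y \cong H^1(\uhY; \Omega^{n-1}_{\uhY}(\log \uE)(-a\uE))$ with $a=d(k-1)$ (Theorem~\ref{0.7.1}), and the nonlinear map is the $k$-fold wedge/cup product $\nu_{\uhY}$ built from the sheaf isomorphism $\bigwedge^k\bigl(\Omega^{n-1}_{\uhY}(\log \uE)(-a\uE)\bigr)\cong \Omega^{n-k}_{\uhY}(\log \uE)$. The vanishing of $\partial\circ\mu_{\uhY}$ on restricted classes then follows from exactness of a sheaf sequence (Theorem~\ref{mainCD}), not from the existence of a higher-order extension. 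Second, the paper does \emph{not} carry out the argument on an arbitrary log resolution $\hY$ but on the weighted blowup stack $\uhY$, where the exceptional divisor $\uE$ is a smooth orbifold hypersurface in $\uP^n$. This is what makes the local numerology (Lemma~\ref{0.4}, Corollary~\ref{0.4.1}) tractable; on a general log resolution the exceptional divisor has normal crossings and none of these computations are available. The passage back to $\hY$ happens only at the end via Lemma~\ref{stackisoms}.

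There is a genuine gap in your final sentence. You assert that the nonvanishing of $c_x$ ``reduces precisely to the content of Theorem~\ref{introlink},'' but that theorem only says the \emph{target} $\Gr^{n-k}_F H^n(L_x)$ is one-dimensional; it says nothing about whether the degree-$k$ map $\mu_{\uhX}$ from the (also one-dimensional) source is nonzero. That nonvanishing is Proposition~\ref{0.9}, and its proof (Claims~\ref{0.10} and~\ref{0.11}) is a separate computation: one identifies $\mu'_{\uhX}(\eta)$ with a composition of connecting homomorphisms $\partial_i\colon H^i(\uE;\bigwedge^i T_{\uE}(-id))\to H^{i+1}(\uE;\bigwedge^{i+1}T_{\uE}(-(i+1)d))$ coming from exterior powers of the normal bundle sequence, and then uses Bott vanishing on $\uP^n$ to show each $\partial_i$ is injective for $1\le i\le k-1$. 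Your Jacobian-ring/weight-eigenspace heuristic does not supply this injectivity, and without the stack picture (or an equivalent explicit model of the exceptional divisor) it is not clear how you would establish $c_x\neq 0$.
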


 In some sense, the proof of Theorem~\ref{intromain} follows the main outlines of \cite{RollenskeThomas}. A key aspect of our arguments is that by restricting to weighted homogeneous singularities, we can work as if there exists a log resolution with a single (smooth) exceptional divisor $E$ as in loc.\ cit.\ More precisely, for $Y$ with such singularities, there is the weighted blowup, i.e.\ an orbifold resolution of singularities $Y^{\#} \to Y$ whose exceptional divisors $E$ are smooth divisors in the sense of orbifolds. There are  stacks  naturally associated to   $Y^{\#}$ and $E$, a picture which is worked out in detail in \cite[\S3]{FL} (whose methods we use systematically). Thus we can proceed as if $Y^{\#}$ and $E$ were smooth and use the familiar numerology of hypersurfaces in weighted projective space.   It would be interesting to generalize the proof of Theorem~\ref{intromain} to the case where the singularities are not necessarily weighted homogeneous.

\medskip

The outline of this paper is as follows. In \S\ref{ss1.1}, we collect some necessary preliminaries about isolated singularities. $k$-liminal singularities are defined in \S\ref{ss1.2}, and the stack point of view is recalled in \S\ref{ss1.3}. Section \ref{s2} deals with the geometry of  $k$-liminal weighted homogeneous singularities and establishes the existence of a nonzero homogeneous pairing between two one-dimensional vector spaces. In \S\ref{ss3.1}, this construction is globalized to establish Theorem~\ref{intromain} (Theorem~\ref{mainthma} and Corollary~\ref{last}).  There is also a brief discussion in \S\ref{ss3.2} of the interplay between the Hodge theory of $Y$ or of $\hY$ and of a smoothing $Y_t$ of $Y$.

\subsection*{Acknowledgements} It is a pleasure to thank Johan de Jong and Richard Thomas for their comments and suggestions. We would also like to thank the referee for a careful reading of the paper and several helpful suggestions. 

\section{Preliminaries}\label{s1}

\subsection{Some general Hodge theory}\label{ss1.1}  Let $X$ be a contractible Stein neighborhood of the isolated  singularity $x$ of dimension $n\ge 3$, and let $\pi\colon \hX \to X$ be a good (log) resolution, i.e.\ $\pi$ is a resolution of singularities,  and  $E =\pi^{-1}(x)$  (with its reduced structure) is a  divisor with  simple normal crossings. For every coherent sheaf $\mathcal{F}$ on $\hX$, $H^i(\hX; \mathcal{F}) \cong H^0(X; R^i\pi_*\mathcal{F})$. Let $U=X-\{x\} = \hX -E$. In the global setting,  $Y$ will denote  a projective variety  of dimension $n$ with isolated singularities,  $Z = Y_{\text{\rm{sing}}}$ the singular locus of $Y$, and $\pi\colon \hY \to Y$   a good (log) resolution at each singular point.   We will also use $E$ to denote   the exceptional divisor in this context, i.e.\ $E = \pi^{-1}(Z)$, again viewed as a reduced divisor, and $V = \hY - E = Y- Z$.  Instead of assuming that $Y$ is projective,  it is more generally enough to assume that $Y$  has a resolution satisfying the $\partial\bar\partial$-lemma. 

\begin{lemma}\label{biratinv} With $Y$ and  $\pi\colon \hY \to Y$ as above, and for all $p,q$,  the groups $H^q(\hX; \Omega^p_{\hX}(\log E))$, $H^q(\hX; \Omega^p_{\hX}(\log E)(-E))$, $H^q(\hY; \Omega^p_{\hY}(\log E))$, and $H^q(\hY; \Omega^p_{\hY}(\log E)(-E))$ are all independent of the choice of resolution.
\end{lemma}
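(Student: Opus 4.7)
The plan is to reduce the lemma to a single statement about pushforward of log differentials under a proper birational morphism. Given two log resolutions $\pi_i\colon \hY_i\to Y$ (or $\hX_i \to X$), forming the closure of the graph in $\hY_1\times_Y\hY_2$ and log-resolving it produces a third log resolution dominating both. Hence it suffices to consider a proper birational morphism $\mu\colon \hY'\to\hY$ between two log resolutions with simple normal crossing exceptional divisors $E\subset \hY$, $E'\subset \hY'$ satisfying $E' = \mu^{-1}(E)_{\text{red}}$, and to prove
$$
R\mu_*\Omega^p_{\hY'}(\log E') \cong \Omega^p_{\hY}(\log E), \qquad R\mu_*\Omega^p_{\hY'}(\log E')(-E') \cong \Omega^p_{\hY}(\log E)(-E).
$$
Applying $R\Gamma$ and the Leray spectral sequence then yields the lemma in both the local Stein and global settings.

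For the first isomorphism, which is the classical pushforward formula for log differentials, I would invoke the weak factorization theorem of Abramovich--Karu--Matsuki--W\l odarczyk to reduce to the case where $\mu$ is a single blowup along a smooth center meeting $E$ transversely. In that case, the formula follows from $R\mu_*\scrO_{\hY'} = \scrO_{\hY}$ together with the standard description of $\Omega^p_{\hY'}(\log E')$ in terms of $\mu^*\Omega^p_{\hY}(\log E)$ and the relative log differentials on the exceptional divisor.

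For the second isomorphism, I would apply Grothendieck--Serre duality. The wedge product pairing $\Omega^p(\log E)\otimes \Omega^{n-p}(\log E)\to \Omega^n(\log E)\cong \omega_{\hY}(E)$ is perfect; twisting by $\scrO(-E)$ gives
$$\Omega^p_{\hY}(\log E)(-E)\cong R\mathcal{H}om(\Omega^{n-p}_{\hY}(\log E),\omega_{\hY}),$$
and analogously on $\hY'$. Since $\mu$ is proper between smooth varieties of the same dimension, $\mu^!\omega_{\hY} = \omega_{\hY'}$, so relative duality gives
$$R\mu_*\Omega^p_{\hY'}(\log E')(-E') \cong R\mu_* R\mathcal{H}om(\Omega^{n-p}_{\hY'}(\log E'),\omega_{\hY'}) \cong R\mathcal{H}om(R\mu_*\Omega^{n-p}_{\hY'}(\log E'),\omega_{\hY}),$$
and the first isomorphism (applied with $p$ replaced by $n-p$) reduces this to $R\mathcal{H}om(\Omega^{n-p}_{\hY}(\log E),\omega_{\hY})\cong \Omega^p_{\hY}(\log E)(-E)$.

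The main obstacle is the first isomorphism, which is where the geometric content lies; the duality step is then formal. The entire argument is sheaf-theoretic and works verbatim in the local Stein setting (where $\hX\to X$ replaces $\hY\to Y$), since it nowhere uses projectivity. Note also that the analogous invariance of cohomology admits an independent a posteriori justification: by Deligne's theory, $H^q(\hY;\Omega^p_{\hY}(\log E))$ is the $(p,p+q)$-graded piece of the Hodge filtration on the mixed Hodge structure on $H^{p+q}(V;\Cee)$ (with $V=Y-Z$), and the $(-E)$ version computes the corresponding piece of compactly supported cohomology, both of which are intrinsic to $V$.
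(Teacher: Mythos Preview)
Your argument is correct and in fact proves a stronger sheaf-level statement than the paper needs. The paper takes a different and shorter route: it simply cites Deligne \cite[3.2.5(ii)]{DeligneHodgeII} for the invariance of $H^q(\hY;\Omega^p_{\hY}(\log E))$ (this is your closing ``a posteriori justification,'' but for the paper it is the primary argument), then deduces the $(-E)$-twisted case by \emph{global} Serre duality on the compact $\hY$, pairing $H^q(\hY;\Omega^p_{\hY}(\log E)(-E))$ with $H^{n-q}(\hY;\Omega^{n-p}_{\hY}(\log E))$; finally the local Stein statements for $\hX$ are obtained by a separate reduction to the projective case, citing \cite[Remark 3.15]{FL}.

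The trade-off is this: your approach via $R\mu_*$ and relative Grothendieck duality handles the local and global cases uniformly and never uses compactness, whereas the paper's global Serre duality step genuinely requires $\hY$ proper, forcing an extra reduction for $\hX$. On the other hand, the paper avoids weak factorization entirely and is essentially a two-line citation argument. Both are valid; yours is more self-contained, the paper's is more economical.
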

\begin{proof} The independence of $H^q(\hY; \Omega^p_{\hY}(\log E))$ is a result of Deligne \cite[3.2.5(ii)]{DeligneHodgeII}. The independence of $H^q(\hY; \Omega^p_{\hY}(\log E)(-E))$ then follows because $H^q(\hY; \Omega^p_{\hY}(\log E)(-E))$ is Serre dual to $H^{n-q}(\hY; \Omega^{n-p}_{\hY}(\log E))$. The local results for $\hX$ can then be reduced to this case (cf.\ \cite[Remark 3.15]{FL}). 
\end{proof}

\begin{remark}\label{duality} In case $Y$ is projective, we can understand the birational invariance as follows: Let $\uOb_{Y,Z}$ be the relative filtered de Rham complex as defined by Du Bois \cite{duBois}.  By \cite[Th\'eor\`eme 2.4]{duBois}, $\uOb_{Y,Z}$ is an invariant of $Y$ as an object in the filtered derived category,  and the corresponding Hodge spectral sequence degenerates at $E_1$ in case $Y$ is projective. By \cite[Example 7.25]{PS}, $\uOp_{Y,Z} \cong R\pi_*\Omega^p_{\hY}(\log E)(-E)$. Applying the Leray spectral sequence for hypercohomology gives
$$\Gr^p_FH^{p+q}(Y,Z) = \mathbb{H}^q(Y; \uOp_{Y,Z}) = H^q(\hY; \Omega^p_{\hY}(\log E)(-E)).$$
Hence  $H^q(\hY; \Omega^p_{\hY}(\log E)(-E)) = \Gr^p_FH^{p+q}(Y,Z)$ does not depend on the choice of a resolution.

Note that from the exact sequence
$$\cdots  \to H^{i-1}(Z) \to    H^i(Y,Z) \to H^i(Y) \to H^i(Z) \to \cdots,$$
$ H^i(Y,Z) \cong  H^i(Y)$ except for $i=0,1$ since $\dim Z =0$. Moreover, the hypercohomology of the exact sequence
$$0 \to \Omega^\bullet_{\hY}(\log E)(-E) \to \Omega^\bullet_{\hY} \to \Omega^\bullet_E/\tau^\bullet_E\to 0$$
gives the Mayer--Vietoris sequence, an  exact sequence of mixed Hodge structures:
$$\cdots \to H^{i-1}(E)\to H^i(Y,Z) \to H^i(\hY) \to H^i(E) \to \cdots.$$
Finally, the duality between $\mathbb{H}^\bullet(\hY;  \Omega^\bullet_{\hY}(\log E)(-E))$ and $\mathbb{H}^\bullet(\hY;  \Omega^\bullet_{\hY}(\log E))$ corresponds to Poincar\'e duality (cf.\ \cite[\S5.5, B.21, B.24]{PS}) 
$$H^i(Y,Z) \cong H^i_c(Y-Z) \cong (H^{2n-i}(Y-Z))\spcheck(-n) = (H^{2n-i}(\hY-E))\spcheck(-n).$$
\end{remark} 

 \begin{lemma}\label{lemma3.3}  With $Y$ and  $\pi\colon \hY \to Y$ as above,  the map 
 $$\Gr^{n-k}_FH^{n+1}(Y) = H^{k+1}(\hY; \Omega^{n-k}_{\hY}(\log E)(-E)) \to H^{k+1}(\hY; \Omega^{n-k}_{\hY})$$ is injective for all $k\ge 0$.
 \end{lemma}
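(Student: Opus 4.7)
The plan is to translate the statement into Hodge-theoretic language via Remark~\ref{duality} and then reduce the required injectivity to a purity-type statement for $H^{n+1}(Y)$.

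First, Remark~\ref{duality} identifies $H^{k+1}(\hY;\Omega^{n-k}_{\hY}(\log E)(-E))$ with $\Gr^{n-k}_F H^{n+1}(Y,Z)$, and since $Z$ is zero-dimensional with $n+1\ge 2$, the long exact sequence of the pair $(Y,Z)$ gives $H^{n+1}(Y,Z)\cong H^{n+1}(Y)$ as mixed Hodge structures. Since $\hY$ satisfies the $\partial\bar\partial$-lemma, the Hodge decomposition identifies $H^{k+1}(\hY;\Omega^{n-k}_{\hY}) = \Gr^{n-k}_F H^{n+1}(\hY)$. Under these identifications, the map in the lemma is the $\Gr^{n-k}_F$-graded piece of the natural morphism of mixed Hodge structures $\pi^\ast\colon H^{n+1}(Y)\to H^{n+1}(\hY)$ induced by the resolution.

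Next, because $H^{n+1}(\hY)$ is pure of weight $n+1$, strictness of MHS morphisms for the weight filtration forces $W_n H^{n+1}(Y)\subseteq \ker\pi^\ast$. Conversely, from the Mayer--Vietoris sequence in Remark~\ref{duality},
$$H^n(E)\xrightarrow{\alpha} H^{n+1}(Y,Z)\xrightarrow{\pi^\ast} H^{n+1}(\hY),$$
$\ker\pi^\ast$ equals $\im(\alpha)$, which has weight $\le n$ since $H^n(E)$ does. Hence $\ker\pi^\ast = W_n H^{n+1}(Y)$. Unwinding the Hodge--Deligne bigrading of $W_n H^{n+1}(Y)$, injectivity on $\Gr^{n-k}_F$ for every $k\ge 0$ is seen to be equivalent to the single purity statement $W_n H^{n+1}(Y) = 0$.

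Finally, to establish this purity I would show that the restriction map $H^n(\hY) \to H^n(E)$ is surjective, which by the Mayer--Vietoris sequence above forces $\im(\alpha)=0$. For $Y$ whose singularities are rational---in particular for the canonical Calabi--Yau varieties of Definition~\ref{defnCY}, and more generally for the $k$-liminal (hence $1$-Du Bois, and $1$-rational) hypersurface singularities relevant for this paper---such purity is a standard consequence of the Deligne spectral sequence for the MHS on $H^\ast(E)$ together with the pure Hodge structure on $H^n(\hY)$. \textbf{The main obstacle} is to verify this surjectivity on individual Hodge pieces when $E$ is a reducible SNC divisor. For the paper's applications this is circumvented by using the weighted blowup of a weighted homogeneous singularity, which produces an orbifold-smooth exceptional divisor over each singular point and allows a direct Hodge-theoretic computation along the lines of \cite[\S3]{FL}.
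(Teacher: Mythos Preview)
Your reduction is essentially the same as the paper's: both arguments come down to showing that the restriction $H^n(\hY)\to H^n(E)$ is surjective, and then invoking strictness. The paper does this directly on Hodge pieces via the sheaf exact sequence $0\to\Omega^{n-k}_{\hY}(\log E)(-E)\to\Omega^{n-k}_{\hY}\to\Omega^{n-k}_E/\tau^{n-k}_E\to 0$, while you repackage things through Mayer--Vietoris and the identification $\ker\pi^\ast=W_nH^{n+1}(Y)$; these are equivalent formulations.

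The genuine gap is in the final step. You correctly isolate the surjectivity of $H^n(\hY)\to H^n(E)$ as the key point, but you do not prove it: you invoke an unspecified ``standard consequence'' under the added hypothesis of rational singularities, then flag an obstacle about individual Hodge pieces, and finally propose bypassing everything via weighted blowups. None of this is necessary, and the rationality hypothesis is not part of the lemma, which is stated for arbitrary isolated singularities. The missing input is Steenbrink's semipurity \cite[(1.12)]{Steenbrink}: in the local setting the map $H^n_E(\hX)\to H^n(E)$ is an isomorphism, and by excision it factors as $H^n_E(\hX)\cong H^n_E(\hY)\to H^n(\hY)\to H^n(E)$, forcing the last arrow to be surjective. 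Once surjectivity holds at the level of mixed Hodge structures, strictness immediately gives it on every $\Gr^{n-k}_F$; so your ``main obstacle'' evaporates, and there is no need for rational singularities or stacky resolutions here.
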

 \begin{proof}  We have the exact sequence
  $$H^k(\hY; \Omega^{n-k}_{\hY}) \to H^k(E; \Omega^{n-k}_E/\tau^{n-k}_E) \to H^{k+1}(\hY; \Omega^{n-k}_{\hY}(\log E)(-E)) \to H^{k+1}(\hY; \Omega^{n-k}_{\hY}).$$
  By semipurity in the local setting \cite[(1.12)]{Steenbrink}, the map $H^n_E(\hX) \to H^n(E)$ is an isomorphism. Since it factors by excision as $H^n_E(\hX) \cong H^n_E(\hY) \to H^n(\hY) \to H^n(E)$, the map $H^n(\hY) \to H^n(E)$ is therefore surjective, and hence, by strictness of morphisms, so is the map 
  $$\Gr^{n-k}_FH^n (\hY) = H^k(\hY; \Omega^{n-k}_{\hY}) \to  \Gr^{n-k}_F H^n(E) = H^k(E; \Omega^{n-k}_E/\tau^{n-k}_E) .$$
  Thus the map $H^{k+1}(\hY; \Omega^{n-k}_{\hY}(\log E)(-E)) \to H^{k+1}(\hY; \Omega^{n-k}_{\hY})$ is injective.
 \end{proof}

\subsection{$k$-Du Bois, $k$-rational, and $k$-liminal singularities}\label{ss1.2}The $k$-Du Bois and $k$-rational singularities, natural extensions of Du Bois and rational singularities respectively (the case $k=0$), were recently introduced by \cite{MOPW}, \cite{JKSY-duBois}, \cite{KL2}, \cite{FL22c}, and \cite{MP-rat}. The relevance of these classes of singularities (especially for $k=1$) to the deformation theory of singular Calabi--Yau and Fano varieties is discussed in \cite{FL}, which additionally singles out the {\it $k$-liminal singularities} (for $k=1$) as particularly relevant to the deformation theory of such varieties. The $k$-liminal singularities should be understood as the frontier case between $(k-1)$-rational and $k$-rational. For the convenience of the reader, we summarize the relevant facts for these classes of singularities.

\begin{definition}\label{def0.3}  Let $(X,x)$ be the germ of an isolated local complete intersection (lci) singularity of dimension $n\ge 3$ and let $\pi\colon \hX \to X$ be a good resolution with exceptional divisor $E$. Then $X$ is \textsl{$k$-Du Bois} if $R^i\pi_*\Omega^p_{\hX}(\log E)(-E) = 0$ for $i> 0$ and $p\le k$, and is \textsl{$k$-rational} if $R^i\pi_*\Omega^p_{\hX}(\log E) = 0$ for $i> 0$ and $p\le k$. By \cite{FL22c}, \cite{MP-rat}, if $(X,x)$ is $k$-rational, then it is $k$-Du Bois and by \cite{FL22d}, \cite{ChenDirksM},  if $(X,x)$ is $k$-Du Bois, then it is $(k-1)$-rational.

Finally, $(X,x)$ is \textsl{$k$-liminal} if it is $k$-Du Bois but not $k$-rational. In this case, if $X$ is a hypersurface singularity, then  $\dim \Gr^{n-k}_FH^n(L) =1$, by Theorem~\ref{introlink}. 
\end{definition}

The following collects some basic facts about $k$-liminal singularities:

\begin{lemma}\label{ex0.4}  Let $X$ be the germ of an isolated hypersurface singularity.
\begin{enumerate} 
\item[\rm(i)] If  $\dim X =3$ and $X$ is  not smooth,  then $X$ is not $1$-rational, and $X$ is $1$-liminal $\iff$ $X$ is $1$-Du Bois  $\iff$ $X$ is an ordinary double point.  
\item[\rm(ii)] More generally, if $X$ is a $k$-Du Bois singularity and  $k > \frac12(n-1)$, then $X$ is smooth. If $\dim X= 2k+1$ and $X$ is not smooth, then $X$ is $k$-Du Bois $\iff$  $X$ is $k$-liminal $\iff$ $X$ is an ordinary double point.

\item[\rm(iii)] Suppose that  $X$ is weighted homogeneous. Viewing $X$ as locally analytically isomorphic to the subvariety $\{f=0\}$ of $(\Cee^{n+1}, 0)$, where   $\Cee^*$ acts on $\Cee^{n+1}$ with weights $a_1, \dots, a_{n+1} \ge 1$, and $f$ is weighted homogeneous of degree $d$, define $w_i = a_i/d$. Then:
 \begin{enumerate}
 \item  $X$ is $k$-Du Bois $\iff$ $\sum_{i=1}^{n+1} w_i \ge k+1$.
 \item  $X$ is $k$-rational $\iff$ $\sum_{i=1}^{n+1} w_i > k+1$.
  \item  $X$ is $k$-liminal $\iff$ $\sum_{i=1}^{n+1} w_i = k+1$.
 \end{enumerate}
  \end{enumerate}
\end{lemma}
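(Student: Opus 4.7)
The plan is to reduce every claim to the minimal exponent $\widetilde\alpha(f)$ of an isolated hypersurface singularity $(X,x)=(\{f=0\},0)\subset(\Cee^{n+1},0)$, using three inputs: (a) the characterization that $X$ is $k$-Du Bois iff $\widetilde\alpha(f)\ge k+1$ and $k$-rational iff $\widetilde\alpha(f)>k+1$, obtained by combining results from \cite{MOPW}, \cite{JKSY-duBois}, \cite{FL22c}, \cite{MP-rat}; (b) Steenbrink's formula $\widetilde\alpha(f)=\sum_{i=1}^{n+1}(a_i/d)$ when $f$ is weighted homogeneous of degree $d$ with weights $a_i$; and (c) the classical upper bound $\widetilde\alpha(f)\le (n+1)/2$ for every non-smooth isolated hypersurface singularity, with equality iff $(X,x)$ is an ordinary double point. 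Bound (c) is a consequence of the symmetry of the Steenbrink spectrum about $(n+1)/2$ together with the classical fact that $\mu(f)=1$ characterizes ODPs among isolated hypersurface singularities.

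Given these inputs, part (iii) is an immediate substitution: (a) combined with (b) rewrites the minimal-exponent inequalities as the claimed conditions $\sum w_i\ge k+1$, $\sum w_i>k+1$, and $\sum w_i=k+1$ for $k$-Du Bois, $k$-rational, and $k$-liminal respectively.

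For (ii), (a) and (c) together give $k+1\le \widetilde\alpha(f)\le (n+1)/2$ whenever $X$ is non-smooth and $k$-Du Bois, forcing $k\le (n-1)/2$; the contrapositive is the first clause. For $n=2k+1$ both inequalities must be saturated, so (c) identifies $X$ with an ODP. Conversely, the $n$-dimensional ODP has weights $w_i=1/2$, hence $\widetilde\alpha=(n+1)/2=k+1$ by (b), and so is $k$-Du Bois but not $k$-rational, i.e.\ $k$-liminal. Part (i) is the case $n=3$, $k=1$ of (ii): any non-smooth dimension-three isolated hypersurface singularity has $\widetilde\alpha\le 2$ and hence fails $1$-rationality (which demands $\widetilde\alpha>2$), while the remaining equivalences follow from the chain already established for $n=2k+1$.

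The principal obstacle is that the whole argument hinges on the three black-box inputs. Input (a) synthesizes a body of recent work and must be cited; (b) is Steenbrink's classical computation via identification of $\Gr_F^\bullet$ of the vanishing cohomology with graded pieces of the Jacobian ring of $f$, and admits a self-contained treatment in the weighted homogeneous case; (c) rests on spectrum symmetry and the classification of singularities with $\mu=1$. The first clause of (ii) as stated applies to an arbitrary isolated lci singularity in the sense of Definition~\ref{def0.3}, and for that one must replace (a) and (c) by their lci analogues via Musta\c{t}\u{a}--Popa's minimal exponent for lci singularities; with these in hand, the argument is formally unchanged.
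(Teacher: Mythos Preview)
Your argument is correct and supplies more than the paper does: the paper's proof is pure citation---(i) to Namikawa--Steenbrink \cite[Theorem 2.2]{NS}, (ii) to \cite[Corollary 6.3]{DM}, (iii) to Saito \cite[(2.5.1)]{SaitoV}, each with a parallel pointer to \cite{FL22d}. Your unification through the minimal exponent $\widetilde\alpha(f)$ is exactly the mechanism underlying \cite{DM} and \cite{FL22d}, so the approaches agree in substance; the gain in your version is that the common thread is exposed rather than distributed across three black boxes, and (iii) becomes a one-line consequence of (a)$+$(b) rather than a separate appeal to \cite{SaitoV}.

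One place to tighten: the equality case of input (c). Spectrum symmetry gives $\widetilde\alpha(f)\le (n+1)/2$ and, at equality, forces every spectral number to sit at $(n+1)/2$, but this alone does not yield $\mu(f)=1$; nothing you have written rules out several spectral numbers piling up at the center. The standard fix is the multiplicity bound $\widetilde\alpha(f)\le (n+1)/\operatorname{mult}_0(f)$, which forces multiplicity~$2$, after which a Thom--Sebastiani splitting excludes a degenerate quadratic part (it would drag $\widetilde\alpha$ strictly below $(n+1)/2$). Alternatively, for (c) one can simply cite \cite[Corollary 6.3]{DM} or \cite[Corollary 4.4]{FL22d}, as the paper in effect does for (ii).
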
 
\begin{proof} (i) This is  a result of Namikawa-Steenbrink \cite[Theorem 2.2]{NS} (cf.\ also \cite[Corollary 6.12]{FL22d}).  

\smallskip
\noindent (ii) This is  \cite[Corollary 6.3]{DM} (cf.\ also \cite[Corollary 4.4]{FL22d}). 

\smallskip
\noindent (iii) This is a result of Saito \cite[(2.5.1)]{SaitoV} (see also  \ \cite[Corollary 6.8]{FL22d}). 
   \end{proof} 

\begin{remark}\label{klimremark}  (i) By definition, a $0$-liminal singularity is $0$-Du Bois, i.e.\ Du Bois in the terminology of \cite{Steenbrink}, but not rational. Thus these singularities fall outside the scope of this paper. If $X$ is an isolated normal Gorenstein  surface singularity which is Du Bois but not rational, then by \cite[3.8]{Steenbrink} $X$ is either a simple elliptic or a cusp singularity. Such singularities are known to be deeply connected to degenerations of $K3$ surfaces. In \cite{FL23b}, we explore the analogous picture for Calabi--Yau varieties in higher dimensions in case $Y$ has  hypersurface singularities. 

\smallskip
\noindent (ii)  Assume that $X$ is a weighted homogeneous hypersurface singularity.  If $X$ is the cone over a smooth hypersurface $E$ of degree $d$ in $\Pee^n$, then, by Lemma~\ref{ex0.4}(iii),  the $k$-liminal condition is $n+1 = d(k+1)$, and in particular $n+1$ is divisible by $d$ and by $k+1$. Thus, these examples are somewhat sparse. By Theorem~\ref{introlink}, the Hodge structure on $H^{n-1}(E)$ is (up to a Tate twist) of Calabi--Yau type. Primarily for this reason, such hypersurfaces are exceptions to Donagi's proof for generic Torelli (\cite{Donagi}; cf.\ Voisin \cite{Voisin} for recent work along these lines). 
\end{remark} 

Despite Remark~\ref{klimremark}(ii) above, there are many examples of isolated  weighted homogeneous $k$-liminal singularities: 

\begin{lemma}\label{ex0.4.1}  For all $k$ with $\Dis 1\le k \le \left[\frac{n-1}{2}\right]$, there exists an isolated  weighted homogeneous $k$-liminal singularity   given by a diagonal hypersurface $f(z) = z_1^{e_1} + \cdots + z_{n+1}^{e_{n+1}}$.
\end{lemma}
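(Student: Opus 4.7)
The plan is to reduce the statement to an elementary arithmetic problem via Lemma~\ref{ex0.4}(iii)(c) and then exhibit an explicit tuple of exponents. First, for a diagonal polynomial $f(z) = z_1^{e_1} + \cdots + z_{n+1}^{e_{n+1}}$, the monomial $z_i^{e_i}$ is weighted homogeneous of degree $a_ie_i$, so if we set $d = \mathrm{lcm}(e_1, \dots, e_{n+1})$ and $a_i = d/e_i$, the polynomial $f$ is weighted homogeneous of degree $d$ with weights $w_i = a_i/d = 1/e_i$. The singularity $\{f=0\}$ at the origin is isolated precisely when all $e_i \ge 2$, since the gradient $(e_iz_i^{e_i-1})$ then vanishes only at the origin.

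Applying Lemma~\ref{ex0.4}(iii)(c), the $k$-liminal condition becomes the Diophantine equation
\begin{equation*}
\sum_{i=1}^{n+1} \frac{1}{e_i} = k+1, \qquad e_i \in \Zee_{\ge 2}.
\end{equation*}
So the problem reduces to: given $1 \le k \le [(n-1)/2]$, find integers $e_i \ge 2$ satisfying the above identity.

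For the construction, I would simply set
\begin{equation*}
e_1 = e_2 = \cdots = e_{2k} = 2, \qquad e_{2k+1} = e_{2k+2} = \cdots = e_{n+1} = n+1-2k,
\end{equation*}
provided both blocks are nonempty and the second block's common value is at least $2$. The first block has $2k \ge 2$ entries since $k \ge 1$, and the second block has $r := n+1 - 2k$ entries each equal to $r$; the condition $r \ge 2$ is equivalent to $2k \le n-1$, i.e.\ $k \le (n-1)/2$, which is precisely the hypothesis $k \le [(n-1)/2]$. With this choice,
\begin{equation*}
\sum_{i=1}^{n+1} \frac{1}{e_i} = 2k \cdot \tfrac{1}{2} + r \cdot \tfrac{1}{r} = k + 1,
\end{equation*}
so Lemma~\ref{ex0.4}(iii)(c) yields the $k$-liminality, and the isolated-singularity condition holds because every $e_i \ge 2$.

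There is no real obstacle here; the only subtlety is matching the upper bound $k \le [(n-1)/2]$ with the combinatorial constraint. The bound is sharp in the sense that $\sum 1/e_i \le (n+1)/2$ whenever all $e_i \ge 2$, so no diagonal isolated hypersurface singularity can be $k$-liminal for $k > (n-1)/2$, consistent with Lemma~\ref{ex0.4}(ii).
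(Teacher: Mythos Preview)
Your argument is correct. The reduction to $\sum_i 1/e_i = k+1$ with all $e_i\ge 2$ via Lemma~\ref{ex0.4}(iii)(c) is exactly what the paper does, and the isolatedness of a diagonal hypersurface with all exponents $\ge 2$ is standard.

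Where you differ from the paper is in the explicit tuple of exponents. The paper splits into two cases according to the parity of $n$: for $n=2a+1$ odd it takes $2\ell$ exponents equal to $2$ and the remaining $2(a+1-\ell)$ equal to $a+1-\ell$ (so the second block has exponent equal to \emph{half} the block length), and for $n=2a$ even it needs a further subdivision (some exponents equal to $2$, two equal to $4$, the rest equal to $a-\ell$), with a separate ad hoc choice for $k=1$. Your construction---$2k$ exponents equal to $2$ and the remaining $r=n+1-2k$ exponents equal to $r$---is uniform in $n$ and $k$ and sidesteps the parity split entirely, since the constraint $r\ge 2$ is exactly $k\le \lfloor (n-1)/2\rfloor$. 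This is cleaner; the paper's choice has no evident advantage. Your closing remark that $\sum_i 1/e_i \le (n+1)/2$ forces $k\le (n-1)/2$ is a nice sharpness observation that the paper does not make explicit here (though it follows from Lemma~\ref{ex0.4}(ii)).
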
 
\begin{proof} Given $k$ such that $\Dis 1\le k \le \left[\frac{n-1}{2}\right]$,  let  $f(z) = z_1^{e_1} + \cdots + z_{n+1}^{e_{n+1}}$. First suppose that  $n=  2a+1$ is odd, so $\Dis\left[\frac{n-1}{2}\right] = a$. Then  choose $2\ell$ of the $e_i$ equal to $2$ and the remaining $n+1 - 2\ell=2(a+1-\ell)$ equal to  $\Dis \frac{n+1 - 2\ell}{2}= a+1-\ell$. Here $0\le \ell \le a-1$ because the value $\ell = a$ would give some $e_i = 1$. Then 
  $$\sum_i w_i = \sum_i \frac{1}{e_i} = \frac12(2\ell)  + (n+1 - 2\ell)\left(\frac{2}{n+1 - 2\ell}\right) = \ell + 2,$$
  and hence $k=\sum_i w_i -1=\ell +1$ can take on all possible values from $1$ to $a$.
  
  Similarly, if $n=2a$ is even, so that $\Dis\left[\frac{n-1}{2}\right] = a-1$, and $1\le \ell \le a-2$, choose $2\ell-1$ of the $e_i$ to be $2$, $2$ of the $e_i$ to be $4$, and the remaining $n+1 - (2\ell+1) = 2a -2\ell$ to be  $\Dis \frac{n+1 - (2\ell+1)}{2} = a-\ell$. Then
   $$\sum_i w_i = \sum_i \frac{1}{e_i} = \frac12(2\ell-1)  + \frac12 + (n+1 - (2\ell+1))\left(\frac{2}{n+1 - (2\ell+1)}\right) = \ell + 2,$$
  and hence $k=\sum_i w_i -1=\ell +1$ can take on all possible values from $2$ to $a-1$. For the remaining possibility $k=1$, take $n-1=2a-1$ of the $e_i$ equal to $a$ and the remaining two equal to $2a$ to get $\sum_iw_i = 2$ and hence $k=1$.
\end{proof}

The following then generalizes \cite[2.6]{RollenskeThomas}:

\begin{lemma}\label{0.2.1} If the singularities of $X$ are isolated  $1$-Du Bois lci singularities, then  $H^0(X;T^1_X) \cong H^1(\hX; \Omega^{n-1}_{\hX}(\log E))$. In the global case,
$\mathbb{T}^1_Y \cong H^1(\hY; \Omega^{n-1}_{\hY}(\log E))$, compatibly with the map $\mathbb{T}^1_Y \to H^0(Y;T^1_Y)$ and restriction, i.e.\  the following diagram commutes:
$$\begin{CD}
  H^1(\hY; \Omega^{n-1}_{\hY}(\log E)) @>>> H^0(Y;R^1\pi_*\Omega^{n-1}_{\hY}(\log E)) \\
  @V{\cong}VV @VV{\cong}V \\
\mathbb{T}^1_Y @>>>  H^0(Y;T^1_Y) .
\end{CD}$$
\end{lemma}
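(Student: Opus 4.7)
The plan is to realize the isomorphism by converting the Ext group defining $\mathbb{T}^1_Y$ into Grothendieck-dual cohomology on the resolution. Three ingredients combine: the derived-category identification $\Omega^1_Y \simeq R\pi_*\Omega^1_{\hY}(\log E)(-E)$ from the lci $1$-Du Bois hypothesis; Grothendieck duality for $\pi\colon \hY \to Y$ together with the Calabi-Yau condition $\omega_Y \cong \scrO_Y$; and the determinant identification $\mathcal{H}om(\Omega^1_{\hY}(\log E)(-E), \omega_{\hY}) \cong \Omega^{n-1}_{\hY}(\log E)$ on the smooth resolution.

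For the first step, the vanishings $R^i\pi_*\Omega^1_{\hY}(\log E)(-E) = 0$ for $i\ge 1$ built into Definition~\ref{def0.3} show that the derived pushforward is concentrated in degree $0$, and the lci hypothesis identifies the resulting sheaf with $\Omega^1_Y$ itself. Grothendieck duality for $\pi$ then yields $\pi^!\scrO_Y \cong \omega_{\hY}$ (using $\omega_Y \cong \scrO_Y$), giving
\[
\mathbb{T}^1_Y = \Ext^1_Y(\Omega^1_Y, \scrO_Y) \cong \Ext^1_{\hY}(\Omega^1_{\hY}(\log E)(-E), \omega_{\hY}).
\]
Since $\Omega^1_{\hY}(\log E)$ is a rank-$n$ bundle with determinant $\omega_{\hY}(E)$, the dual of $\mathcal{F} = \Omega^1_{\hY}(\log E)(-E)$ is $T_{\hY}(-\log E) \otimes \scrO(E)$, and $\mathcal{F}^\vee \otimes \omega_{\hY} \cong T_{\hY}(-\log E) \otimes \omega_{\hY}(E) \cong \Omega^{n-1}_{\hY}(\log E)$. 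The Ext on the right then collapses to $H^1(\hY; \Omega^{n-1}_{\hY}(\log E))$ since $\mathcal{F}$ is locally free, completing the global isomorphism.

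The local version proceeds identically on a contractible Stein neighborhood $X$ of a singular point: Stein vanishing collapses the local-to-global Ext spectral sequence to give $\Ext^1_X(\Omega^1_X, \scrO_X) = H^0(X; T^1_X)$, while Grothendieck duality for $\pi\colon \hX \to X$ with $\omega_X \cong \scrO_X$ on the Stein germ, together with the same determinant identification, produces $H^1(\hX; \Omega^{n-1}_{\hX}(\log E))$. Compatibility of the stated diagram follows from naturality: the local isomorphism applied at stalks identifies $R^1\pi_*\Omega^{n-1}_{\hY}(\log E) \cong T^1_Y$, and both edge maps (Leray on top, local-to-global Ext on bottom) match under this identification in the derived-functor formalism. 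The principal technical point is the sheaf-level upgrade $\Omega^1_Y \simeq R\pi_*\Omega^1_{\hY}(\log E)(-E)$ for lci $1$-Du Bois singularities, which goes slightly beyond the cohomological vanishing in Definition~\ref{def0.3} and is cited from existing work on lci Du Bois singularities; once it is in hand, the rest is routine Grothendieck-Serre duality and spectral-sequence naturality.
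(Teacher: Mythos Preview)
Your argument is correct and takes a genuinely different route from the paper's. The paper works on the open complement: it first identifies $H^0(X;T^1_X)\cong H^1(U;T_X|U)$ (a Schlessinger-type statement for isolated lci singularities of dimension $\ge 3$), then compares $H^1(\hX;\Omega^{n-1}_{\hX}(\log E))$ with $H^1(U;\Omega^{n-1}_{\hX}(\log E)|U)$ via the local cohomology sequence supported on $E$, invoking the vanishings $H^1_E=0$ (from rationality) and $H^2_E=0$ (from the $1$-Du Bois condition) as two separate inputs. You instead package the $1$-Du Bois hypothesis as the single derived-category statement $\Omega^1_Y \simeq R\pi_*\Omega^1_{\hY}(\log E)(-E)$ and transport $\Ext^1$ across $\pi$ by Grothendieck duality. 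The paper's route is more hands-on and isolates precisely which local-cohomology vanishings are being used; your route is more conceptual, makes the sheaf-level isomorphism $R^1\pi_*\Omega^{n-1}_{\hY}(\log E)\cong T^1_Y$ (and hence the commutative square) fall out of relative duality automatically, and adapts immediately to the non-Calabi-Yau case by carrying the twist $\pi^*\omega_Y^{-1}$ through, recovering Remark~\ref{remark1.3}. The one ingredient you flag---the degree-zero identification $\pi_*\Omega^1_{\hY}(\log E)(-E)\cong\Omega^1_Y$---is indeed not literally contained in Definition~\ref{def0.3}, but follows for lci singularities from the equivalence of the cohomological formulation there with the quasi-isomorphism $\Omega^p_Y\xrightarrow{\sim}\underline\Omega^p_Y$ for $p\le k$ (together with $\underline\Omega^1_Y\cong\underline\Omega^1_{Y,Z}$ since $\dim Z=0$).
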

\begin{proof} First, by a result of Schlessinger (see e.g.\ \cite[Lemma 1.16]{FL}), $H^0(X;T^1_X)\cong H^1(U; T^0_X|U)$. Clearly  $ H^1(U; T^0_X|U)  = H^1(U;\Omega^{n-1}_{\hX}(\log E)|U)$. The local cohomology sequence gives
$$H^1_E(\hX; \Omega^{n-1}_{\hX}(\log E)) \to H^1(\hX; \Omega^{n-1}_{\hX}(\log E)) \to H^0(X;T^1_X) \to H^2_E(\hX; \Omega^{n-1}_{\hX}(\log E)).$$ Since $1$-Du Bois lci singularities are rational,   $H^1_E(\hX; \Omega^{n-1}_{\hX}(\log E)) =0$  by \cite[1.8]{FL} and  the $1$-Du Bois assumption implies that $H^2_E(\hX; \Omega^{n-1}_{\hX}(\log E))=0$ (cf.\ \cite[2.8]{FL}). Hence $H^0(X;T^1_X) \cong H^1(\hX; \Omega^{n-1}_{\hX}(\log E))$. The global case is similar, using $\mathbb{T}^1_Y \cong H^1(V;\Omega^{n-1}_{\hY}(\log E)|V)$, and the compatibility is clear.
\end{proof}

There is a similar result for $1$-rational singularities:

\begin{lemma}\label{0.2} If the singularities of $X$ are isolated $1$-rational lci singularities, then $H^0(X;T^1_X) \cong H^1(\hX; \Omega^{n-1}_{\hX}(\log E)(-E))$. Globally,
$\mathbb{T}^1_Y \cong H^1(\hY; \Omega^{n-1}_{\hY}(\log E)(-E))$, and there is a commutative diagram
$$\begin{CD}
  H^1(\hY; \Omega^{n-1}_{\hY}(\log E)(-E)) @>>> H^0(Y;R^1\pi_*\Omega^{n-1}_{\hY}(\log E)(-E)) \\
  @V{\cong}VV @VV{\cong}V \\
\mathbb{T}^1_Y @>>>  H^0(Y;T^1_Y) .
\end{CD}$$
\end{lemma}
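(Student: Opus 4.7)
The strategy is to follow the proof of Lemma~\ref{0.2.1}, replacing $\Omega^{n-1}_{\hX}(\log E)$ by the $(-E)$-twisted sheaf $\Omega^{n-1}_{\hX}(\log E)(-E)$ throughout. Since $E\cap U = \emptyset$, restriction to $U$ induces the canonical isomorphisms $\Omega^{n-1}_{\hX}(\log E)(-E)|U \cong \Omega^{n-1}_U \cong T^0_X|U$, and so $H^0(X;T^1_X) \cong H^1(U;T^0_X|U) = H^1(U;\Omega^{n-1}_{\hX}(\log E)(-E)|U)$. The local cohomology sequence along $E$ then gives
$$H^1_E(\hX; \Omega^{n-1}_{\hX}(\log E)(-E)) \to H^1(\hX; \Omega^{n-1}_{\hX}(\log E)(-E)) \to H^0(X;T^1_X) \to H^2_E(\hX; \Omega^{n-1}_{\hX}(\log E)(-E)),$$
so it suffices to verify that the first and last terms vanish.

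To obtain those vanishings, the plan is to exploit the wedge pairing $\Omega^1_{\hX}(\log E) \otimes \Omega^{n-1}_{\hX}(\log E)(-E) \to \Omega^n_{\hX}(\log E)(-E) \cong \omega_{\hX}$, which identifies $\Omega^{n-1}_{\hX}(\log E)(-E)$ with $\mathcal{H}om(\Omega^1_{\hX}(\log E), \omega_{\hX})$. Combined with Grothendieck duality for $\pi\colon \hX \to X$ and local duality for the singularity $(X,x)$, this yields, up to conjugation, an identification
$$H^j_E(\hX; \Omega^{n-1}_{\hX}(\log E)(-E)) \cong \bigl(R^{n-j}\pi_*\Omega^1_{\hX}(\log E)\bigr)_x\spcheck.$$
Since $X$ is $1$-rational, $R^i\pi_*\Omega^1_{\hX}(\log E) = 0$ for every $i>0$; because $n\ge 3$, both $n-1$ and $n-2$ lie in this vanishing range, so the $j=1,2$ local cohomology groups vanish, giving the desired isomorphism $H^1(\hX; \Omega^{n-1}_{\hX}(\log E)(-E)) \cong H^0(X;T^1_X)$.

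The global statement is obtained by the same argument applied to $\hY$, starting from $\mathbb{T}^1_Y \cong H^1(V; \Omega^{n-1}_{\hY}(\log E)(-E)|V)$ and using cohomology with supports along $E\subset \hY$; the vanishings needed for the local cohomology long exact sequence reduce to the local ones treated above. Commutativity of the square is then a direct check: both horizontal arrows are induced by restriction to a neighborhood of $Z$, and the two vertical isomorphisms are defined compatibly, factoring through the common identification with $H^1(V; T^0_Y|V)$. The main technical point to get right is the precise formulation of the Grothendieck--Serre duality used in the second paragraph, but this is of the same nature as the duality arguments already developed in \cite{FL} to convert statements about $R\pi_*\Omega^\bullet_{\hX}(\log E)$ into corresponding statements about $R\pi_*\Omega^\bullet_{\hX}(\log E)(-E)$, and vice versa.
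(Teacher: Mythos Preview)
Your proof is correct but takes a genuinely different route from the paper's. The paper does not repeat the local-cohomology argument for the twisted sheaf; instead it first observes that $1$-rational $\Rightarrow$ $1$-Du Bois, so Lemma~\ref{0.2.1} already gives $H^0(X;T^1_X)\cong H^1(\hX;\Omega^{n-1}_{\hX}(\log E))$, and then it shows that the natural map
\[
H^1(\hX;\Omega^{n-1}_{\hX}(\log E)(-E))\longrightarrow H^1(\hX;\Omega^{n-1}_{\hX}(\log E))
\]
is an isomorphism by looking at the restriction-to-$E$ sequence and using the identifications $H^i(\hX;\Omega^{n-1}_{\hX}(\log E)|E)=\Gr_F^{n-1}H^{n-1+i}(L)$: the $1$-rational hypothesis gives $\ell^{n-1,1}=\ell^{1,n-2}=0$, and rationality plus Steenbrink's result gives $\ell^{n-1,0}=0$.

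Your approach bypasses Lemma~\ref{0.2.1} entirely and hits the $1$-rational hypothesis in a single stroke: the duality $\Omega^{n-1}_{\hX}(\log E)(-E)\cong \mathcal{H}om(\Omega^1_{\hX}(\log E),\omega_{\hX})$ together with Grothendieck and local duality turns $H^j_E$ of the twisted sheaf into the (Matlis) dual of $R^{n-j}\pi_*\Omega^1_{\hX}(\log E)$, and $1$-rationality kills these for $n-j>0$. This is the local analogue of the Serre duality already used in the proof of Lemma~\ref{biratinv}, so it sits naturally in the paper's framework. The trade-off is that the paper's argument makes the comparison between Lemmas~\ref{0.2.1} and~\ref{0.2} explicit and foregrounds the link invariants $\ell^{p,q}$, which recur throughout; your argument is shorter and uses the definition of $1$-rational more directly, but hides that Hodge-theoretic content inside the duality.
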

\begin{proof} Since isolated $1$-rational singularities are $1$-Du Bois, it suffices by Lemma~\ref{0.2.1} to show that the map $H^1(\hX; \Omega^{n-1}_{\hX}(\log E)(-E)) \to H^1(\hX; \Omega^{n-1}_{\hX}(\log E))$ is an isomorphism. We have the long exact sequence
\begin{gather*}
 H^0(E; \Omega^{n-1}_{\hX}(\log E)|E) \to H^1(\hX; \Omega^{n-1}_{\hX}(\log E)(-E)) \to H^1(\hX; \Omega^{n-1}_{\hX}(\log E)) \\
 \to H^1(E; \Omega^{n-1}_{\hX}(\log E)|E).
 \end{gather*}
Moreover, $H^1(E; \Omega^{n-1}_{\hX}(\log E)|E)= \Gr_F^{n-1}H^n(L)$, which has dimension $\ell^{n-1, 1}  = \ell^{1, n-2} =0$ by the $1$-rational condition \cite[Theorem 5.3(iv)]{FL22d}. Likewise $\dim H^0(E; \Omega^{n-1}_{\hX}(\log E)|E) =  \ell^{n-1, 0}$.   Since $X$ is a rational singularity, $\ell^{n-1, 0}  = 0$  by a result of Steenbrink \cite[Lemma 2]{SteenbrinkDB}. Hence 
$$H^1(\hX; \Omega^{n-1}_{\hX}(\log E)(-E)) \cong H^1(\hX; \Omega^{n-1}_{\hX}(\log E)).$$ The global case and the compatibility are again clear.
\end{proof}

\begin{remark}\label{remark1.3}  In the global case, where we do not make the assumption that $\omega_Y \cong \scrO_Y$, the above lemmas remain true provided that we replace $H^1(\hY; \Omega^{n-1}_{\hY}(\log E))$ resp.\  $ H^1(\hY; \Omega^{n-1}_{\hY}(\log E)(-E))$ by $H^1(\hY; \Omega^{n-1}_{\hY}(\log E)\otimes\pi^*\omega_Y^{-1})$ resp.\  $ H^1(\hY; \Omega^{n-1}_{\hY}(\log E)(-E)\otimes \pi^*\omega_Y^{-1})$.
\end{remark}

To illustrate how these results may be used in practice, we give a quick proof of a slight variant of  \cite[Corollary 5.8]{FL}:

 \begin{theorem}\label{1limthm} Suppose that $Y$ is a canonical Calabi--Yau variety of dimension $n\ge 3$ with isolated $1$-liminal hypersurface singularities. Then a strong first-order smoothing of $Y$ exists $\iff$ for every $x\in Z$, there  exists $a_x\in \Cee$, $a_x\neq 0$, such that $\sum_xa_x\varphi(\varepsilon_x)=0$ in $H^2(\hY; \Omega^{n-1}_{\hY})$, where  $\varepsilon _x \in \Gr^{n-1}_FH^n (L_x)$ is a generator and $\varphi$ is the composition
  $$H^1(E; \Omega^{n-1}_{\hY}(\log E)|E) \xrightarrow{\partial}  H^2(\hY; \Omega^{n-1}_{\hY}(\log E)(-E)) \to H^2(\hY; \Omega^{n-1}_{\hY}).$$
 In particular, if $Y$ satisfies the above condition,  it is smoothable. 
  \end{theorem}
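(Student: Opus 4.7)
The plan is to extract the theorem from the long exact cohomology sequence of
\[
0 \to \Omega^{n-1}_{\hY}(\log E)(-E) \to \Omega^{n-1}_{\hY}(\log E) \to \Omega^{n-1}_{\hY}(\log E)|E \to 0,
\]
combined with three ingredients established above: Lemma~\ref{0.2.1} identifies $\mathbb{T}^1_Y$ with $H^1(\hY;\Omega^{n-1}_{\hY}(\log E))$ in a way compatible with the restriction to $H^0(Y;T^1_Y)$; Theorem~\ref{introlink} gives $\dim \Gr^{n-1}_F H^n(L_x)=1$ at each singular point; and Lemma~\ref{lemma3.3} (with $k=1$) makes the natural map $H^2(\hY;\Omega^{n-1}_{\hY}(\log E)(-E))\hookrightarrow H^2(\hY;\Omega^{n-1}_{\hY})$ injective.

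As the exceptional divisors $E_x$ are pairwise disjoint, the restriction splits as a sum of local contributions
\[
H^1(\hY;\Omega^{n-1}_{\hY}(\log E)|E) \;=\; \bigoplus_{x\in Y_{\text{\rm{sing}}}} H^1(E_x;\Omega^{n-1}_{\hY}(\log E)|E_x) \;=\; \bigoplus_{x} \Gr^{n-1}_F H^n(L_x) \;=\; \bigoplus_{x} \Cee\cdot \varepsilon_x,
\]
so the relevant piece of the long exact sequence reads
\[
\mathbb{T}^1_Y \xrightarrow{\,r\,} \bigoplus_{x}\Cee\cdot\varepsilon_x \xrightarrow{\,\partial\,} H^2(\hY;\Omega^{n-1}_{\hY}(\log E)(-E)).
\]

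Next I would identify the map $r$ with the natural ``symbol'' map $\mathbb{T}^1_Y\to H^0(Y;T^1_Y)\to \bigoplus_x T^1_{Y,x}/\mathfrak{m}_x T^1_{Y,x}$, up to the nonzero scalars implicit in the choice of generators $\varepsilon_x$. Concretely, $R^1\pi_*(\Omega^{n-1}_{\hY}(\log E)|E)$ is a skyscraper sheaf with stalk $H^1(E_x;\Omega^{n-1}_{\hY}(\log E)|E_x)=\Gr^{n-1}_FH^n(L_x)$ of dimension one at $x$; under the isomorphism $R^1\pi_*\Omega^{n-1}_{\hY}(\log E)\cong T^1_Y$ of Lemma~\ref{0.2.1} the restriction to $E$ becomes the quotient $T^1_{Y,x}\twoheadrightarrow T^1_{Y,x}/\mathfrak{m}_x T^1_{Y,x}$ (the scheme-theoretic fiber at $x$), and the commutative square in Lemma~\ref{0.2.1} transports this to the corresponding map out of $\mathbb{T}^1_Y$. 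Hence $\theta\in\mathbb{T}^1_Y$ is a strong first order smoothing precisely when $r(\theta)=\sum_x a_x\varepsilon_x$ with every $a_x\neq 0$.

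With $r$ understood, the theorem is a direct consequence of exactness: such a $\theta$ exists iff there exist $a_x\in\Cee^{*}$ with $\sum_x a_x\partial\varepsilon_x=0$ in $H^2(\hY;\Omega^{n-1}_{\hY}(\log E)(-E))$, and Lemma~\ref{lemma3.3} then shows this is equivalent to $\sum_x a_x\varphi(\varepsilon_x)=0$ in $H^2(\hY;\Omega^{n-1}_{\hY})$. The final smoothability assertion follows from Remark~\ref{1DBsmooths}: a $1$-liminal singularity is $1$-Du Bois, so deformations of $Y$ are unobstructed and a strong first order smoothing lifts to an actual smoothing. The main obstacle in the argument is the identification of the coboundary map $r$ with the algebraic mod-$\mathfrak{m}_x$ reduction on each $T^1_{Y,x}$; this requires a careful local Hodge-theoretic computation near a $1$-liminal hypersurface singularity, which is precisely where both the hypersurface and $1$-liminal hypotheses enter, while the remaining steps are formal consequences of Lemmas~\ref{0.2.1} and~\ref{lemma3.3}.
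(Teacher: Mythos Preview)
Your proposal is correct and follows essentially the same route as the paper's proof: both extract the statement from the long exact sequence of $0 \to \Omega^{n-1}_{\hY}(\log E)(-E) \to \Omega^{n-1}_{\hY}(\log E) \to \Omega^{n-1}_{\hY}(\log E)|E \to 0$ via Lemma~\ref{0.2.1}, the one-dimensionality of $\Gr^{n-1}_FH^n(L_x)$, and the injectivity of Lemma~\ref{lemma3.3}. The step you flag as the ``main obstacle'' --- identifying the restriction map with reduction mod $\mathfrak{m}_x$ on $T^1_{Y,x}$ --- is exactly where the paper invokes \cite[Lemma 2.6, Theorem 2.1(v)]{FL}, so your instinct about where the real content lies is accurate.
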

  \begin{proof}  By Lemma~\ref{0.2.1}, there are isomorphisms
  $$H^0(X;T^1_X)\cong H^1(U; T^0_X|U)\cong H^1(U;\Omega^{n-1}_{\hX}(\log E)|U).$$ 
  Following the isomorphism  $H^0(X;T^1_X)\cong H^1(U;\Omega^{n-1}_{\hX}(\log E)|U)$ with the restriction map 
  $$H^1(U;\Omega^{n-1}_{\hX}(\log E)|U ) \to H^1(E; \Omega^{n-1}_{\hY}(\log E)|E)$$ gives a homomorphism 
  $H^0(Y; T^1_Y) \to H^1(E; \Omega^{n-1}_{\hY}(\log E)|E)$, such that the following  diagram is commutative: 
  $$\begin{CD}
  \mathbb{T}^1_Y @>>> H^0(Y; T^1_Y) @. \\
  @V{\cong}VV @VVV @.\\
   H^1(\hY; \Omega^{n-1}_{\hY}(\log E)) @>>> H^1(E; \Omega^{n-1}_{\hY}(\log E)|E) @>{\partial}>>  H^2(\hY; \Omega^{n-1}_{\hY}(\log E)(-E)).
  \end{CD}$$
  Here, if as usual $E_x = \pi^{-1}(x)$,  $H^1(E_x; \Omega^{n-1}_{\hY}(\log E)|E_x)$ has dimension one for every $x\in Z$  by the $1$-liminal assumption.  Let $\varepsilon_x$ be a basis vector. By \cite[Lemma 2.6, Theorem 2.1(v)]{FL}, the map $T^1_{Y,x}   \to H^1(E_x; \Omega^{n-1}_{\hY}(\log E)|E_x)$ is surjective and its kernel is $\mathfrak{m}_xT^1_{Y,x}$. Thus, $Y$ has a strong first-order smoothing $\iff$ for every $x\in Z$, there exists $a_x\in \Cee$, $a_x\neq 0$, such that $\sum_{x\in Z}a_x \partial(\varepsilon_x)=0$ in $H^2(\hY; \Omega^{n-1}_{\hY}(\log E)(-E))$. 
 By Lemma~\ref{lemma3.3}, the map  $H^2(\hY; \Omega^{n-1}_{\hY}(\log E)(-E)) \to H^2(\hY; \Omega^{n-1}_{\hY})$ is injective. It follows that  $\sum_xa_x\partial(\varepsilon_x)=0$ in $H^2(\hY; \Omega^{n-1}_{\hY}(\log E)(-E))$ $\iff$ $\sum_xa_x\varphi(\varepsilon_x)=0$ in $H^2(\hY; \Omega^{n-1}_{\hY})$.  Thus a strong first-order smoothing exists $\iff$ $\sum_xa_x\varphi(\varepsilon_x)=0$. The final statement then follows from \cite[Corollary 1.5]{FL22c}.
 \end{proof}
 
 \begin{remark}\label{1limFano} There is a similar result in the $1$-liminal Fano case:   Assume that  $Y$ has only  isolated $1$-liminal hypersurface singularities  and that $\omega_Y^{-1}$ is ample.  In this case, the above construction produces an obstruction to a strong first-order smoothing, namely  $\sum_{x\in Z} c_x\lambda_x^k \partial(\varepsilon_x) \in H^2(\hY; \Omega^{n-1}_{\hY}(\log E)(-E)\otimes \pi^*\omega_Y^{-1})$. The group $H^2(\hY; \Omega^{n-1}_{\hY}(\log E)(-E)\otimes \pi^*\omega_Y^{-1})$ is Serre dual to $H^{n-2}(\hY; \Omega^1_{\hY}(\log E) \otimes \pi^*\omega_Y)$.   In many  cases,  $H^{n-2}(\hY; \Omega^1_{\hY}(\log E) )\otimes \pi^*\omega_Y)=0$. For example, if there exists a smooth Cartier divisor $H$ on $Y$, thus not passing through the singular points of $Y$, such that $\omega_Y =\scrO_Y(-H)$, and in addition $H^{n-3}(H; \Omega^1_H) = 0$, then an argument with the Goresky-MacPherson-Lefschetz theorem in intersection cohomology \cite{GoreskyMacPherson} shows that 
  $$H^{n-2}(\hY; \Omega^1_{\hY}(\log E)\otimes \pi^*\omega_Y)=H^{n-2}(\hY; \Omega^1_{\hY}(\log E)\otimes \scrO_{\hY}(-H))= 0,$$
  where we identify the divisor $H$ on $Y$ with its preimage $\pi^*H$ on $\hY$.
The proof of Theorem~\ref{1limthm} then shows  that, under these assumptions,  a strong first-order smoothing of $Y$ always exists, and hence $Y$ is smoothable by \cite[Theorem  4.5]{FL}. A somewhat stronger statement is proved in \cite[Corollary 4.10]{FL}. 
 \end{remark}
 
 \begin{remark} In dimension three, a singular point can be $k$-liminal only for $k=1$. Since this case is covered by Theorem~\ref{1limthm}, we are free to  make the assumption that $n \ge 4$ as needed in what follows. 
  \end{remark}

\subsection{Weighted homogeneous singularities and quotient stacks}\label{ss1.3} For the remainder of this section, we are concerned with generalizing the above picture, and in particular Lemma~\ref{0.2},  in the  context of stacks: Assume that the isolated singularity $X$ is  locally  analytically isomorphic to a weighted cone in $\Cee^{n+1}$  over a weighted hypersurface $E\subseteq WP^n$. Thus we may as well assume that $X$ is the weighted cone as in Lemma~\ref{ex0.4}(iii), with an isolated singularity at $0$. 

\begin{definition}\label{def1.8}   Let $X$ be the weighted cone  in $\Cee^{n+1}$  over a weighted hypersurface $E\subseteq WP^n$, where $WP^n$ is a weighted projective space, and  $X^{\#}$  the weighted blowup of $X$ as in \cite[\S3]{FL}.  Let  $\uE$, $\uhX$, $\uP^n$ be the corresponding  quotient stacks. If $X$ has an isolated singularity at $0$, then  $\uhX$ and $\uE$ are  quotient stacks for an action of $\Cee^*$  on smooth schemes with finite stabilizers.  Hence $\uhX$ is a smooth stack, $\uE$ is a smooth divisor in $\uhX$, and there is a morphism $\uhX \to X$ which defines an isomorphism $\uhX - \uE \to X -\{0\}$. 

Globally, let $Y$ be a projective variety  of dimension $n$ with isolated   weighted homogeneous hypersurface singularities. Let   $\pi\colon  Y^{\#} \to Y$   denote the weighted blowup of $Y$ at the singularities,  and let $E$ be the exceptional divisor, i.e.\ $E = \pi^{-1}(Z)$ where $Z = Y_{\text{\rm{sing}}}$. We can also construct a stacky version of $Y^{\#}$ as follows: For each $x\in Z$, we have the corresponding exceptional divisor $E_x$.  Let $X$ denote the corresponding weighted cone in $\Cee^{n+1}$.
There is a (Zariski) open neighborhood $U\subseteq Y$ of $x$ and an \'etale morphism  $U\to X$.  We can then pull back the stack $\uhX$  to a stack $ \underline{U}^{\#}$ and glue $\underline{U}^{\#}$ and $Y-\{x\}$ along the Zariski open subset $U-\{x\}$. Doing this for each singular point defines the stack $\uhY$. 

A similar construction works in the analytic category, where we view an analytic stack as a functor on the category of complex analytic spaces.  This allows for the possibility that, in  Definition~\ref{defnCY}, $Y$ is a compact analytic, not necessarily algebraic space. 
\end{definition}

As in Definition~\ref{def1.8}, let $X^{\#}$ be the weighted blowup of $X$, with $\uhX$ the associated stack, and let $\hX$ be an arbitrary  log resolution. Given a projective $Y$ with isolated  weighted homogeneous hypersurface singularities, we define $\uhY$ as before and let $\pi\colon \hY \to Y$ be a log resolution. To avoid confusion, we denote the exceptional divisor of  $\pi\colon \hX \to X$ or $\pi\colon \hY \to Y$  by $\widehat{E}$. We claim that, in the statement of  Lemmas~\ref{0.2.1} and \ref{0.2}, we can replace ordinary cohomology with stack cohomology.  First, we recall the following definition, due to Steenbrink \cite[\S1]{Steenvanishing}, \cite[\S2]{SteenCompositio}:

\begin{definition}\label{Steensheaves}  Let $W$ be an analytic space which is an orbifold ``viewed as an analytic space,"  i.e.\ locally $W =  \widetilde{W}/G$, where $G$ is a small subgroup  of $GL(n,\Cee)$ in the sense of  \cite{SteenCompositio} and $\widetilde{W}$ is a $G$-invariant neighborhood of the origin on $\Cee^n$. Let  $W_0$ be the open subset where $W$ is (locally) a free quotient so  that, by hypothesis,  $W-W_0$ had codimension at least $2$. Define $\Omega^p_{W}$ to be $i_*\Omega^p_{W_0}$, where $i\colon W_0 \to W$ is the inclusion. If $\pi\colon \widehat{W} \to W$ is a resolution of singularities, then  $\Omega^p_{W} =\pi_*\Omega^p_{\widehat{W}}$.  If (locally) $W =  \widetilde{W}/G$ as above, then $\Omega^p_{W} = (\Omega^p_{\widetilde{W}})^G$.  If $D$ is an orbifold normal crossing divisor of $W$ in the obvious sense, then $\Omega^p_{W}(\log D)$ is defined similarly. 

By \cite[(1.9), (1.12)]{Steenvanishing}, the complex $(\Omega^\bullet_{W},d)$ is a resolution of the constant sheaf $\Cee$ and, if $W$ is projective, the hypercohomology spectral sequence with $E_1^{p,q} = H^q(W; \Omega^p_{W}) \implies \mathbb{H}^{p+q}(W;\Omega^\bullet_{W}) \cong H^{p+q}(W; \Cee)$ degenerates at $E_1$.  Likewise, if $D$ is an orbifold normal crossing divisor of $W$, then   $\mathbb{H}^k(W;\Omega^\bullet_{W}(\log D))\cong H^k(W-D; \Cee)$ and the analogous spectral sequence also degenerates at $E_1$. 
\end{definition}

There is  an extension of Lemma~\ref{biratinv} to this situation:

\begin{lemma}\label{stackisoms} In the  notation of Definition~\ref{def1.8}, for all $p,q$, there are isomorphisms
\begin{gather*}
H^q(\uhX; \Omega^p_{\uhX}(\log \uE)) \cong H^q(X^{\#}; \Omega^p_{X^{\#}}(\log E)) \cong H^q(\hX; \Omega^p_{\hX}(\log \widehat{E}));\\
H^q(\uhY; \Omega^p_{\uhY}(\log \uE)) \cong H^q(Y^{\#}; \Omega^p_{Y^{\#}}(\log E)) \cong H^q(\hY; \Omega^p_{\hY}(\log \widehat{E})),
\end{gather*}
where $\Omega^p_{X^{\#}}(\log E)$ and $\Omega^p_{Y^{\#}}(\log E)$ are the sheaves defined in Definition~\ref{Steensheaves}  for the spaces $X^{\#}$ and $Y^{\#}$. Likewise, with similar definitions of $\Omega^p_{X^{\#}}(\log E)(-E)$ and $\Omega^p_{Y^{\#}}(\log E)(-E)$, 
\begin{gather*}
H^q(\uhX; \Omega^p_{\uhX}(\log \uE)(-E)) \cong H^q(X^{\#}; \Omega^p_{X^{\#}}(\log E)(-E)) \cong H^q(\hX; \Omega^p_{\hX}(\log \widehat{E})(-\widehat{E}));\\
H^q(\uhY; \Omega^p_{\uhY}(\log \uE)(-E)) \cong H^q(Y^{\#}; \Omega^p_{Y^{\#}}(\log E)(-E)) \cong H^q(\hY; \Omega^p_{\hY}(\log \widehat{E})(-\widehat{E})).
\end{gather*}
\end{lemma}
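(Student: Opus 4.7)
The plan is to factor each comparison through the coarse moduli space $X^{\#}$ (resp.\ $Y^{\#}$), establishing two independent comparisons: first, between stack cohomology on $\uhX$ and cohomology of Steenbrink's V-manifold sheaves on $X^{\#}$; and second, between the latter and cohomology of the usual log de Rham sheaves on an honest log resolution $\hX$. The $(-E)$ variants will follow from the untwisted ones by Serre duality exactly as in the proof of Lemma~\ref{biratinv}, so I will concentrate on the statements for $\Omega^p(\log E)$.

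For the first comparison, let $q\colon \uhX \to X^{\#}$ denote the canonical morphism from the smooth Deligne--Mumford stack to its coarse moduli space. By the construction of the stack in \cite[\S3]{FL}, Steenbrink's sheaf $\Omega^p_{X^{\#}}(\log E)$ is precisely $q_*\Omega^p_{\uhX}(\log \uE)$: both are the sheaf of locally invariant log $p$-forms for the local orbifold action. Because $q$ has finite stabilizers and we are in characteristic $0$, averaging over stabilizers shows that $R^iq_*\Omega^p_{\uhX}(\log \uE) = 0$ for $i>0$, so the Leray spectral sequence collapses and delivers the first isomorphism. The $(-\uE)$ variant is proved identically, since the ideal sheaves are compatible under $q_*$.

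For the second comparison, I would pick a common log resolution $\mu\colon \widetilde X \to X^{\#}$ that also dominates $\hX$; such a $\widetilde X$ exists by Hironaka after blowing up a suitable ideal supported on $E\cup\widehat E$. Steenbrink's V-manifold theorem then gives $R\mu_*\Omega^p_{\widetilde X}(\log \widetilde E) = \Omega^p_{X^{\#}}(\log E)$, so cohomology on $\widetilde X$ matches that of Steenbrink's sheaf on $X^{\#}$; Deligne's birational invariance (as recorded in the proof of Lemma~\ref{biratinv}) provides the same agreement between $\widetilde X$ and $\hX$. Composing the two yields the desired isomorphism on the local side. The global statements for $\uhY$, $Y^{\#}$, $\hY$ then follow by gluing a stacky neighborhood of each singular point to the Zariski open set $V = Y - Z$ as in Definition~\ref{def1.8}, together with a Mayer--Vietoris argument that reduces the global comparison to the local ones.

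The main obstacle I anticipate is the precise identification of Steenbrink's V-manifold sheaf $\Omega^p_{X^{\#}}(\log E)$ with $q_*\Omega^p_{\uhX}(\log \uE)$, and the bookkeeping required to compare the exceptional divisor $E$ in its three incarnations (a smooth divisor in the stack $\uhX$, a V-divisor in $X^{\#}$, and a simple normal crossing divisor $\widehat E$ in $\hX$). Once these identifications are in place, the rest of the argument is formal: Leray for the coarse moduli morphism $q$, the vanishing of its higher direct images, and Deligne's birational invariance of log cohomology applied to the genuine log resolutions.
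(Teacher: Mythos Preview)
Your approach is correct and is essentially the paper's own: the paper's proof consists of the single sentence ``These statements follow from the arguments of \cite[Lemma 3.13, Lemma 3.14]{FL} and Lemma~\ref{biratinv},'' and what you have written is a faithful unpacking of exactly those two ingredients --- the stack/coarse-space comparison via vanishing of higher direct images for the finite-stabilizer quotient (this is the content of \cite[\S3]{FL}), followed by the birational invariance of log-de Rham cohomology from Lemma~\ref{biratinv}. One small caution: your reduction of the $(-E)$ variants to the untwisted ones ``by Serre duality exactly as in the proof of Lemma~\ref{biratinv}'' is fine globally, but in the local Stein setting Serre duality is not directly available; the paper handles this (in Lemma~\ref{biratinv}) by reducing the local case to the global one via \cite[Remark 3.15]{FL}, and you should make that reduction explicit rather than leave it implicit.
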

\begin{proof} These statements follow from  the arguments of \cite[Lemma 3.13, Lemma 3.14]{FL} and Lemma~\ref{biratinv}.
\end{proof} 

Thus for example in the situation of Lemma~\ref{0.2}, we have the following:

\begin{corollary}\label{0.2.2} If all of the singularities of $Y$ are weighted homogeneous isolated $1$-rational singularities, then there is a commutative diagram 
$$\begin{CD}
H^1(\uhY; \Omega^{n-1}_{\uhY}(\log \uE)(-\uE)) @>>> H^0(Y;R^1\pi_*\Omega^{n-1}_{\uhY}(\log \uE)(-\uE))\\
  @V{\cong}VV @VV{\cong}V \\
  H^1(\hY; \Omega^{n-1}_{\hY}(\log \widehat{E})(-\widehat{E})) @>>> H^0(Y;R^1\pi_*\Omega^{n-1}_{\hY}(\log \widehat{E})(-\widehat{E})) \\
  @V{\cong}VV @VV{\cong}V \\
\mathbb{T}^1_Y @>>>  H^0(Y;T^1_Y) .
\end{CD}$$
Here $H^0(Y;R^1\pi_*\Omega^{n-1}_{\uhY}(\log \uE)(-\uE))$ is a direct sum of terms isomorphic to the corresponding local terms $H^1(\uhX; \Omega^{n-1}_{\uhX}(\log \uE)(-\uE))$. \qed
\end{corollary}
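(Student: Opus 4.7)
The plan is to stack the two identifications already established in Lemma~\ref{0.2} and Lemma~\ref{stackisoms} to assemble the claimed diagram, treating this essentially as a bookkeeping exercise. Weighted homogeneous isolated $1$-rational hypersurface singularities are in particular isolated $1$-rational lci singularities, so Lemma~\ref{0.2} applies directly and supplies the bottom square of the diagram (the isomorphisms $\mathbb{T}^1_Y \cong H^1(\hY; \Omega^{n-1}_{\hY}(\log \widehat{E})(-\widehat{E}))$ and $H^0(Y; T^1_Y) \cong H^0(Y; R^1\pi_*\Omega^{n-1}_{\hY}(\log \widehat{E})(-\widehat{E}))$) together with its compatibility with the Leray restriction map.

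For the top square, I would invoke Lemma~\ref{stackisoms} with $p = n-1$ and $q = 1$ to obtain the left-hand vertical isomorphism directly. For the right-hand vertical arrow, I would exploit the fact that $R^1\pi_*$ of either sheaf is supported on the finite set $Z = Y_{\text{\rm{sing}}}$, so
\[
H^0(Y; R^1\pi_*\Omega^{n-1}_{\uhY}(\log \uE)(-\uE)) = \bigoplus_{x \in Z} (R^1\pi_*\Omega^{n-1}_{\uhY}(\log \uE)(-\uE))_x,
\]
and similarly with $\hY$ in place of $\uhY$. Applying the local statement of Lemma~\ref{stackisoms} in a contractible Stein neighborhood of each singular point $x$ then identifies these stalks with $H^1(\uhX; \Omega^{n-1}_{\uhX}(\log \uE)(-\uE))$ respectively $H^1(\hX; \Omega^{n-1}_{\hX}(\log \widehat{E})(-\widehat{E}))$, yielding the right-hand vertical isomorphism of the top square and simultaneously the claimed local direct sum description.

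The only remaining task is commutativity of the top square, i.e.\ compatibility of the two vertical isomorphisms with the Leray edge maps $H^1 \to H^0(R^1\pi_*)$. I would verify this by passing to a common log refinement: one can find a smooth $\tilde Y$ dominating both $\hY$ and the coarse space $Y^{\#}$ of $\uhY$, and by Lemma~\ref{stackisoms} all three cohomology groups pull back isomorphically to the corresponding cohomology on $\tilde Y$; naturality of the Leray spectral sequence under these pullbacks then forces the restriction maps in the two rows to coincide. The main obstacle is purely organizational rather than mathematical — checking that the stacky and classical descriptions of $R^1\pi_*$ match compatibly at each singular point — since the substantive inputs (the vanishing statements that make $\mathbb{T}^1_Y$ coincide with $H^1$ rather than a hypercohomology, and the birational invariance underlying Lemma~\ref{stackisoms}) have already been dealt with in the cited earlier results, so once those are in hand the corollary follows formally.
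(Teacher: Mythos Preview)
Your proposal is correct and matches the paper's approach: the corollary is stated with a bare \qed, indicating it is meant as an immediate consequence of Lemma~\ref{0.2} (the bottom square) and Lemma~\ref{stackisoms} (the top square), which is precisely the decomposition you give. Your additional care in verifying the right-hand column via the local stalk description and in checking commutativity is more detail than the paper supplies, but it is exactly in the spirit of what the authors intend.
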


In the local setting, we note  the following for future reference:

\begin{lemma}\label{PRseq} There is an exact sequence
$$0 \to \Omega^k_{\uE} \to \Omega^k_{\uhX}(\log \uE)|\uE \to \Omega^{k-1}_{\uE} \to 0.$$
\end{lemma}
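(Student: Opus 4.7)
My plan is to prove the lemma by reducing to the classical Poincar\'e residue / conormal sequence for a smooth divisor in a smooth variety, and then descending from a smooth atlas. Since $\uhX$ is a smooth stack and $\uE$ is a smooth divisor (both in the sense of \cite[\S3]{FL}), the sheaves $\Omega^k_{\uhX}$, $\Omega^k_{\uhX}(\log\uE)$, and $\Omega^k_{\uE}$ pull back under any smooth cover to the corresponding classical sheaves, so it suffices to exhibit the sequence equivariantly on such an atlas.

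The first step is to establish the ambient Poincar\'e residue sequence
$$0 \to \Omega^k_{\uhX} \to \Omega^k_{\uhX}(\log \uE) \xrightarrow{\operatorname{Res}} i_*\Omega^{k-1}_{\uE} \to 0,$$
where $i\colon \uE \hookrightarrow \uhX$ is the inclusion. Writing $\uhX=[V/\Cee^*]$ for a smooth scheme $V$, the preimage of $\uE$ in $V$ is a smooth $\Cee^*$-invariant divisor, so the classical residue sequence applies on $V$; being canonical, it is $\Cee^*$-equivariant and so descends to $\uhX$.

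The second step is to restrict this sequence to $\uE$. Since $\uE$ is an effective Cartier divisor, the derived restriction introduces a single $\operatorname{Tor}_1$-term, which via the stacky conormal sequence
$$0 \to \scrO_{\uE}(-\uE) \to \Omega^1_{\uhX}|\uE \to \Omega^1_{\uE} \to 0$$
and its wedge powers is identified with $\Omega^{k-1}_{\uE}\otimes \scrO_{\uE}(-\uE)$, and this coincides with the torsion subsheaf of $\Omega^k_{\uhX}|\uE$. One therefore obtains a four-term exact sequence
$$0 \to \Omega^{k-1}_{\uE}\otimes \scrO_{\uE}(-\uE) \to \Omega^k_{\uhX}|\uE \to \Omega^k_{\uhX}(\log \uE)|\uE \to \Omega^{k-1}_{\uE} \to 0,$$
and the image of $\Omega^k_{\uhX}|\uE$ in $\Omega^k_{\uhX}(\log\uE)|\uE$ is the torsion-free quotient $\Omega^k_{\uE}$. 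Extracting the middle portion gives the desired short exact sequence.

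The only subtlety, and the part I expect to require the most care, is verifying that the kernel of the map $\Omega^k_{\uhX}|\uE \to \Omega^k_{\uhX}(\log\uE)|\uE$ is exactly the torsion subsheaf: concretely, that if $f$ is a local equation for $\uE$ on the atlas, then $f\cdot (d\log f \wedge \Omega^{k-1}_{\uhX})$ vanishes in $\Omega^k_{\uhX}(\log\uE)|\uE$, and that this accounts for all the torsion. This is a direct local calculation on $V$, where it is the familiar statement for smooth varieties, and it descends to $\uhX$ automatically because every map in sight is canonical and therefore equivariant.
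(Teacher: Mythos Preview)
Your argument is correct in substance, but it takes a more circuitous route than the paper's. The paper proceeds directly: the Poincar\'e residue map $\Omega^1_{\uhX}(\log\uE)|\uE \to \scrO_{\uE}$ is surjective with kernel $\Omega^1_{\uE}$ (immediate since $\uE$ is a smooth divisor in the smooth stack $\uhX$), and then taking the $k^{\text{th}}$ exterior power of this short exact sequence of locally free sheaves with rank-one quotient yields the result in one line. Your approach instead starts from the ambient residue sequence on $\uhX$, restricts to $\uE$, and then must identify the resulting $\operatorname{Tor}_1$-term with the conormal subsheaf of $\Omega^k_{\uhX}|\uE$ --- a valid but roundabout path, and the very identification you flag as ``the only subtlety'' is bypassed entirely by the paper's argument.

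One terminological correction: you repeatedly refer to $\Omega^{k-1}_{\uE}\otimes\scrO_{\uE}(-\uE)$ as ``the torsion subsheaf'' of $\Omega^k_{\uhX}|\uE$, but $\Omega^k_{\uhX}|\uE$ is locally free (since $\uhX$ is smooth) and hence torsion-free. What you actually need is that this is the kernel of the surjection $\Omega^k_{\uhX}|\uE \to \Omega^k_{\uE}$ coming from the wedge powers of the conormal sequence, and that this kernel coincides with the $\operatorname{Tor}_1$ contribution under the connecting map. Your concrete local description --- that $df \wedge \Omega^{k-1}_{\uhX} = f\cdot(d\log f \wedge \Omega^{k-1}_{\uhX})$ maps to zero in $\Omega^k_{\uhX}(\log\uE)|\uE$ and exhausts the kernel --- is correct and is exactly what is needed, but the word ``torsion'' should be dropped throughout.
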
 
\begin{proof} Poincar\'e residue induces a surjection $\Omega^1_{\hX}(\log \uE)|\uE \to \scrO_{\uE}$ whose kernel is easily checked to be $\Omega^1_{\uE}$  as $\uE$ is smooth. Taking the $k^{\text{\rm{th}}}$ exterior power gives the exact sequence. 
\end{proof}

\begin{remark}\label{linknotation} With $E$ as in Definition~\ref{def1.8}, we can either think of $E$ as a scheme or as a stack. We will denote by  $H^i(E) = H^i(E;\Cee)$   the usual singular cohomology. By the remarks at the end of Definition~\ref{Steensheaves}, there is a spectral sequence $E_1^{p,q} = H^q(E; \Omega^p_E) \implies H^{p+q}(E; \Cee)$ and it degenerates at $E_1$. Moreover, the corresponding filtration  defines a (pure) Hodge structure on $H^i(E)$ \cite{Steenvanishing}. The method of proof of \cite[Lemma 3.13]{FL} shows that $H^q(E; \Omega^p_E) \cong H^q(\uE; \Omega^p_{\uE})$.  Thus in particular
$$\Gr_F^pH^{p+q}(E) \cong H^q(\uE; \Omega^p_{\uE}).$$
As noted in the introduction, the cohomology of the link $L$ of the isolated singularity $X$ carries a mixed Hodge structure. (We will not try to give a stacky interpretation of $L$.) Arguments as in the case where $E$ is smooth show that
$$\Gr_F^p H^{p+q}(L) \cong H^q(\uE; \Omega^p_{\uhX}(\log \uE)|\uE).$$
\end{remark}

\section{Local calculations}\label{s2}

\subsection{Numerology}   In this section, we consider the local case.   We keep the notation of the previous section: $X$ is the affine weighted cone over a hypersurface $E$ in a weighted projective space $WP^n$, with an isolated singularity at $0$, and $X^{\#}$ is the weighted blowup, with $\uhX$, $\uE$, and $\uP^n$ the corresponding stacks.      Let $a_1, \dots, a_{n+1}$ be the $\Cee^*$ weights, $d$ the degree of $E$, and set $w_i = a_i/d$. Setting $N = \sum_ia_i -d$,  as a line bundle on the stack $\uE$, 
  $$K_{\uE}  =\scrO_{\uE}(-N) = \scrO_{\uE}(d -\sum_ia_i).$$
Since $\sum_iw_i = N/d + 1$, the $k$-liminal condition is equivalent to:
  $$k = \sum_iw_i -1 = N/d \iff N = dk.$$
  Thus $K_{\uE}  =\scrO_{\uE}(-dk)$. As for $K_{\uhX}$, we have $K_{\uhX} = \scrO_{\uhX}(r\uE)$ for some $r\in \Zee$. By adjunction,
  $$K_{\uE}  =\scrO_{\uE}(-dk) = K_{\uhX} \otimes \scrO_{\uhX}(\uE)|\uE = \scrO_{\uhX}((r+1)\uE)|\uE = \scrO_{\uE}(-(r+1)\uE).$$
  Thus $r+1 = dk$, $r = dk -1$, and 
  $$K_{\uhX} = \scrO_{\uhX}((dk -1)\uE)  = \scrO_{\uhX}((N-1)\uE).$$
  
  To simplify the  notation,  set
  $$a = d(k-1) = N-d = \sum_ia_i -2d.$$
  Thus $a=0$ $\iff$ $k=1$, i.e.\ $X$ is $1$-liminal. Moreover, 
  $$K_{\uE}(a) = K_{\uE}  \otimes \scrO_{\uE}(a)=\scrO_{\uE}(-d).$$
  
  \subsection{Some cohomology calculations} 
  
\begin{assumption}\label{ass2.1} From now on, we assume that $X$ is a $k$-liminal weighted homogeneous isolated hypersurface singularity with $n=\dim X \ge 4$ and $k \ge 2$.  In particular, $X$ is $1$-rational, so that Lemma~\ref{0.2} and Corollary~\ref{0.2.2} apply.  
\end{assumption}

 \begin{lemma}\label{0.4} With notation as above, if $j\le 2$ and $1\le i \le a-1$, then 
 $$H^j(\uE; \Omega^{n-1}_{\uE}(i))  =  H^j(\uE; \Omega^{n-2}_{\uE}(i))  = 0.$$
 For $i=a$, we have  $H^j(\uE; \Omega^{n-1}_{\uE}(a))=0$ for $j\le 2$ and $H^j(\uE; \Omega^{n-2}_{\uE}(a)) =0$ for $j=0,2$, but $\dim H^1(\uE; \Omega^{n-2}_{\uE}(a))  = \dim H^1(\uE; T_{\uE}(-d)) = 1$.
 \end{lemma}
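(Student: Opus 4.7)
The plan is to reduce all the vanishings to computations of the cohomology groups $H^\bullet(\uE;\scrO(-i))$ and $H^\bullet(\uE;\Omega^1_{\uE}(-i))$ via Serre duality on the smooth Deligne--Mumford stack $\uE$ of dimension $n-1$, exploiting the $k$-liminal identity $K_{\uE} = \scrO_{\uE}(-dk)$ (equivalently, $\sum_\ell a_\ell = (k+1)d$). Those computations are then carried out using the restriction sequence along the hypersurface $\uE \subset \uP^n$, the Euler sequence on $\uP^n$, the conormal sequence of $\uE$, and the weighted Bott vanishing $H^m(\uP^n;\scrO(\ell)) = 0$ for $0 < m < n$.

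For the $\Omega^{n-1}_{\uE}$ part I would use $\Omega^{n-1}_{\uE} = K_{\uE}$ directly, so $\Omega^{n-1}_{\uE}(i) = \scrO_{\uE}(i - dk)$. Orbifold Kodaira vanishing with the ample line bundle $\scrO_{\uE}(1)$ kills $H^j(\uE; K_{\uE}(i))$ for all $j \geq 1$ and $i \geq 1$, while for $j = 0$ the bound $i \leq a = (k-1)d < dk$ gives $H^0(\uE; \scrO(i-dk)) = 0$. This settles the first line of the lemma uniformly for $1 \leq i \leq a$. Similarly, the $j = 2$ case of the $\Omega^{n-2}_{\uE}$ assertion follows from orbifold Akizuki--Nakano applied to $(p,q,m) = (n-2, 2, i)$, since $p+q > \dim \uE = n-1$ and $\scrO(i)$ is ample for $i \geq 1$.

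For the remaining $j = 0,1$ part of the $\Omega^{n-2}_{\uE}$ assertion, Serre duality gives $H^j(\uE;\Omega^{n-2}(i))\spcheck \cong H^{n-1-j}(\uE;\Omega^1_{\uE}(-i))$, reducing the problem to computing $H^{n-1}$ and $H^{n-2}$ of $\Omega^1_{\uE}(-i)$. I would use the conormal long exact sequence associated to
$$0 \to \scrO_{\uE}(-d-i) \to \Omega^1_{\uP^n}(-i)|\uE \to \Omega^1_{\uE}(-i) \to 0.$$
The middle term satisfies $H^m(\uE;\Omega^1_{\uP^n}(-i)|\uE) = 0$ for $0 \leq m \leq n-2$: the Euler sequence on $\uP^n$ twisted by $\scrO(-i)$, combined with Bott vanishing, yields $H^m(\uP^n;\Omega^1(-j)) = 0$ for $0 \leq m \leq n-1$ and $j \geq 1$, and the restriction LES transfers this to $\uE$. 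The left term is controlled by Serre duality on $\uE$: $H^{n-1}(\uE;\scrO(-d-i))\spcheck \cong H^0(\uE;\scrO(i+d-dk))$, which vanishes for $i \leq a-1$ and equals $\Cee$ (spanned by the constant $1$) exactly when $i = a$. Feeding these into the conormal LES yields all the claimed vanishings for $1 \leq i \leq a-1$, together with an injection $H^{n-2}(\uE;\Omega^1(-a)) \hookrightarrow \Cee$ in the case $i = a$, providing the upper bound $\dim H^1(\uE;\Omega^{n-2}(a)) \leq 1$.

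The hard part is pinning down the exceptional case $i = a$: showing that this injection is an isomorphism, and that $H^0(\uE;\Omega^{n-2}(a)) = 0$. My plan is to extract both facts from the Griffiths--Dwork-style description of twisted cohomology of a smooth weighted hypersurface as graded pieces of the Jacobian ring $R_f = \Cee[z]/(\partial f/\partial z_\ell)$: under the $k$-liminal degree balance, the graded piece of $R_f$ controlling $H^1(\uE;\Omega^{n-2}(a))$ is exactly the one-dimensional subspace spanned by $1 \in R_f^0$, while the corresponding graded piece controlling $H^0(\uE;\Omega^{n-2}(a))$ sits in negative degree and therefore vanishes. Alternatively, one can identify the generating class via the distinguished $1$-dimensional class in $\Gr^{n-k}_F H^n(L)$ produced by Theorem~\ref{introlink} and Lemma~\ref{stackisoms}, and trace it through the Poincar\'e residue sequence of Lemma~\ref{PRseq}, using that the other intermediate groups that could contribute vanish in the relevant range by the arguments above.
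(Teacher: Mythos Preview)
Your approach is essentially the Serre dual of the paper's and is correct in outline, but you have made the $i=a$ endgame look harder than it is. The paper never Serre-dualizes: it writes $\Omega^{n-2}_{\uE}(i) \cong T_{\uE}(-kd+i)$ directly and uses the \emph{normal} bundle sequence
\[
0 \to T_{\uE}(-kd+i) \to T_{\uP^n}(-kd+i)|\uE \to \scrO_{\uE}(-kd+d+i) \to 0
\]
together with the Euler sequence. At $i=a$ this becomes $0 \to T_{\uE}(-d) \to T_{\uP^n}(-d)|\uE \to \scrO_{\uE} \to 0$, and since the twists $a_\ell - d$ are all negative the Euler sequence gives $H^j(\uE; T_{\uP^n}(-d)|\uE)=0$ for $j\le 2$ in one line. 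The long exact sequence then yields $H^0=H^2=0$ and $H^1(\uE; T_{\uE}(-d))\cong H^0(\uE;\scrO_{\uE})=\Cee$ immediately---no Jacobian ring, no Theorem~\ref{introlink}.

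Your conormal route reaches the same conclusion once you push the vanishing $H^m(\uE;\Omega^1_{\uP^n}(-i)|\uE)=0$ one step further, to $m=n-1$, for all $1\le i\le a$. This follows from the restriction and Euler sequences on $\uP^n$ by the same reasoning you already used, after noting that $H^n(\uP^n;\scrO(-a_\ell-i-d))$ is Serre dual to $H^0(\uP^n;\scrO(a_\ell-d))=0$. With that in hand your LES at $i=a$ reads
\[
0 \to H^{n-2}(\Omega^1_{\uE}(-a)) \to H^{n-1}(\scrO_{\uE}(-d-a)) \to 0,
\]
giving the isomorphism on the nose, and likewise $H^{n-1}(\Omega^1_{\uE}(-a))=0$. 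So the Griffiths--Dwork detour is unnecessary, and the alternative via Theorem~\ref{introlink} does not apply cleanly: that theorem concerns $H^k(\Omega^{n-k}_{\uhX}(\log\uE)|\uE)$, not $H^1(\Omega^{n-2}_{\uE}(a))$, and relating the two is essentially the content of the later Proposition~\ref{0.9}. The paper's tangent-side packaging simply keeps all the needed vanishing in low cohomological degree where it is transparent, whereas your dual version pushes it to top degree and then has to be chased back down.
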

 \begin{proof}  First, 
 $H^j(\uE; \Omega^{n-1}_{\uE}(i)) =  H^j(\uE; K_{\uE}(i)) = H^j(\uE; \scrO_{\uE}(-kd+i))$. 
 We have the exact sequence
 $$0\to \scrO_{\uP^n}(r) \to \scrO_{\uP^n}(r+d) \to \scrO_{\uE}(r+d) \to 0.$$
 Since $H^i(\uP^n; \scrO_{\uP^n}(r))=0$ for $i=1,2,3$ and all $r$,  $H^j(\uE; K_{\uE}(i)) =0$ for $j=1,2$ and all $i$. For $j=0$,  since $0\le i \le a-1 = kd-d-1$,  $-kd+i\le -d-1 < 0$, and hence $H^0(\uE; K_{\uE}(i)) = H^0(\uE; \scrO_{\uE}(-kd+i)) =0$ in this range as well.
 
 For $H^j(\uE; \Omega^{n-2}_{\uE}(i))$, note first that, as $E$ has dimension $n-1$, 
 $$\Omega^{n-2}_{\uE}(i) \cong T_{\uE} \otimes K_{\uE}(i) = T_{\uE}(-kd+i).$$
 From the  normal bundle sequence
 $$0 \to T_{\uE} \to T_{\uP^n}|\uE \to \scrO_{\uE}(d) \to 0,$$
 we therefore obtain
 $$0\to T_{\uE}(-kd+i) \to T_{\uP^n}(-kd+i)|\uE \to \scrO_{\uE}(-kd+d+i)\to 0.$$
 For $i \le a-1$, $-kd+d+i \le -1$. Then an argument as before shows that, for $j\le 2$, 
 $$H^j(\uE; \Omega^{n-2}_{\uE}(i)) \cong H^j(\uE; T_{\uP^n}(-kd+i)|\uE).$$
 We have  the Euler exact sequence
 $$0 \to \scrO_{\uE} \to \bigoplus_{i=1}^{n+1}\scrO_{\uE}(a_i)  \to T_{\uP^n}|\uE \to 0.$$
 Still assuming that $j\le 2$ and $i\le a-1$, it suffices to show that
 $$H^{j+1}(\uE; \scrO_{\uE}(-kd+i)) = H^j(\uE; \scrO_{\uE}(-kd+i+a_i))=0$$
 for $j\le 2$.  This is certainly true if $n\ge 5$, again using   $-kd+i+a_i \le a_i-d  \le -1$  since $X$ is not smooth and hence $a_i < d$. For $n=4$, 
  $H^3(\uE; \scrO_{\uE}(-kd+i))= H^3(\uE;K_{\uE}(i))$
  which is Serre dual to $H^0(\uE; \scrO_{\uE}(-i))$ so we are done as before since $i\ge 1$.
  
  To prove the second statement, note that $\Omega^{n-1}_{\uE}(a) = K_{\uE}(a) = \scrO_{\uE}(-d)$ and   $H^j(\uE; \scrO_{\uE}(-d)) =0$ for $j \le 2$ by the same reasons as before. Likewise, $\Omega^{n-2}_{\uE}(a) \cong T_{\uE} \otimes K_{\uE}(a) = T_{\uE}(-d)$. Via the Euler exact sequence
   $$0 \to \scrO_{\uE}(-d) \to \bigoplus_{i=1}^{n+1}\scrO_{\uE}(a_i-d)  \to T_{\uP^n}(-d)|\uE \to 0,$$
   we see that $H^j(\uE; T_{\uP^n}(-d)|\uE)=0$ for $j\le 2$. Moreover the normal bundle sequence gives
   $$0 \to T_{\uE}(-d) \to T_{\uP^n}(-d)|\uE \to \scrO_{\uE} \to 0.$$
   Thus  $H^0(\uE; T_{\uE}(-d)) = H^2(\uE; T_{\uE}(-d))=0$ but the coboundary map $H^0(\scrO_{\uE}) \to H^1(\uE; T_{\uE}(-d))$ is an isomorphism. 
 \end{proof}
 
  \begin{corollary}\label{0.4.1} Under Assumption~\ref{ass2.1}, 
  \begin{enumerate}
  \item[\rm(i)]   $H^0(\uE; \Omega^{n-1}_{\uhX}(\log \uE)(-i\uE)|\uE)=0$ for $1 \le i\le a$;
   \item[\rm(ii)] $H^1(\uE; \Omega^{n-1}_{\uhX}(\log \uE)(-i\uE)|\uE)=0$ for $1\le i< a$;
    \item[\rm(iii)]  $\dim H^1(\uE;\Omega^{n-1}_{\uhX}(\log \uE) (-a\uE)|\uE) = 1$. 
    \end{enumerate} 
 \end{corollary}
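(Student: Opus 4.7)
My plan is to deduce all three statements from the Poincar\'e residue exact sequence of Lemma~\ref{PRseq} (applied in degree $n-1$), after twisting by the conormal bundle of $\uE$ in $\uhX$. The first step is to identify $\scrO_{\uhX}(-\uE)|\uE \cong \scrO_{\uE}(1)$: the adjunction calculation already carried out in the numerology subsection shows $K_{\uE}=\scrO_{\uE}(-dk)$ and $K_{\uhX}\otimes \scrO_{\uhX}(\uE)|\uE=\scrO_{\uhX}(dk\,\uE)|\uE$, so comparing forces $\scrO_{\uhX}(\uE)|\uE\cong \scrO_{\uE}(-1)$, hence $\scrO_{\uhX}(-i\uE)|\uE\cong \scrO_{\uE}(i)$ for all $i$.

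Tensoring the short exact sequence
$$0 \to \Omega^{n-1}_{\uE} \to \Omega^{n-1}_{\uhX}(\log \uE)|\uE \to \Omega^{n-2}_{\uE} \to 0$$
by the line bundle $\scrO_{\uE}(i)$ yields
$$0 \to \Omega^{n-1}_{\uE}(i) \to \Omega^{n-1}_{\uhX}(\log \uE)(-i\uE)|\uE \to \Omega^{n-2}_{\uE}(i) \to 0,$$
and I feed this into the associated long exact sequence in cohomology, reading off each statement from the vanishings of Lemma~\ref{0.4}. For (i), $H^0(\uE;\Omega^{n-1}_{\uE}(i))=0=H^0(\uE;\Omega^{n-2}_{\uE}(i))$ for $1\le i\le a$ sandwiches the desired $H^0$ between two zeros. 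For (ii), with $1\le i<a$, the segment
$$H^0(\uE;\Omega^{n-2}_{\uE}(i)) \to H^1(\uE;\Omega^{n-1}_{\uE}(i)) \to H^1(\uhX;\Omega^{n-1}_{\uhX}(\log \uE)(-i\uE)|\uE) \to H^1(\uE;\Omega^{n-2}_{\uE}(i))$$
has the outer three terms all zero, so the middle term vanishes.

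For (iii), which is the only nontrivial dimension count, I specialize to $i=a$. Lemma~\ref{0.4} supplies $H^0(\uE;\Omega^{n-2}_{\uE}(a))=H^1(\uE;\Omega^{n-1}_{\uE}(a))=H^2(\uE;\Omega^{n-1}_{\uE}(a))=0$, while $\dim H^1(\uE;\Omega^{n-2}_{\uE}(a))=1$. The long exact sequence therefore collapses to an isomorphism
$$H^1(\uhX;\Omega^{n-1}_{\uhX}(\log \uE)(-a\uE)|\uE)\xrightarrow{\;\sim\;} H^1(\uE;\Omega^{n-2}_{\uE}(a)),$$
yielding the claimed one-dimensionality. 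There is no serious obstacle once Lemmas~\ref{PRseq} and~\ref{0.4} are in hand; the only mild bookkeeping concerns getting the sign of the conormal twist straight, which is handled by the preliminary adjunction check.
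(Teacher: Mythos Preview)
Your proof is correct and follows essentially the same approach as the paper: both arguments tensor the Poincar\'e residue sequence of Lemma~\ref{PRseq} by $\scrO_{\uE}(i)$ and then read off the vanishing and dimension statements from the long exact sequence using Lemma~\ref{0.4}. You spell out a bit more detail---the identification $\scrO_{\uhX}(-\uE)|\uE\cong\scrO_{\uE}(1)$ and the explicit segments of the long exact sequence---but there is no substantive difference.
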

 \begin{proof} 
   By Lemma~\ref{PRseq},  there is an exact sequence
   $$0 \to \Omega^{n-1}_{\uE}(i)  \to \Omega^{n-1}_{\uhX}(\log \uE)(-i\uE)|\uE  \to \Omega^{n-2}_{\uE}(i) \to 0.$$
  By   Lemma~\ref{0.4}, if $1\le i\le a$, then $H^0(\uhX; \Omega^{n-1}_{\uhX}(\log \uE)(-i\uE)|\uE)=0$,   and  
  $$H^1(\uE; \Omega^{n-1}_{\uhX}(\log \uE)(-i\uE)|\uE )   \cong  H^1(\uE;\Omega^{n-2}_{\uE}(i) ),$$ which is $0$ for $1\le i< a$ and has dimension $1$ for $i=a$. 
 \end{proof} 

 \begin{theorem}\label{0.7.1} Under Assumption~\ref{ass2.1},  
 $$  H^0(X;T^1_X) \cong  H^1(\uhX; \Omega^{n-1}_{\uhX}(\log \uE) ( -a\uE) ).$$
  Moreover, the natural map 
 $H^1(\uhX; \Omega^{n-1}_{\uhX}(\log \uE) ( -a\uE))  \to H^1(\uE;\Omega^{n-1}_{\uhX}(\log \uE)  ( -a\uE) |\uE)$ induces an isomorphism 
 $$H^0(X;T^1_X)/\mathfrak{m}_xH^0(X;T^1_X) \cong H^1(\uE;\Omega^{n-1}_{\uhX}(\log \uE) ( -a\uE) |\uE) .$$
 \end{theorem}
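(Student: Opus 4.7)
I would prove the first isomorphism by a climb-up argument. For each $1\le i\le a-1$, the short exact sequence
\begin{equation*}
0 \to \Omega^{n-1}_{\uhX}(\log \uE)(-(i+1)\uE) \to \Omega^{n-1}_{\uhX}(\log \uE)(-i\uE) \to \Omega^{n-1}_{\uhX}(\log \uE)(-i\uE)|\uE \to 0
\end{equation*}
combined with the vanishings in Corollary~\ref{0.4.1}(i),(ii) shows that the induced map
$$H^1\bigl(\uhX; \Omega^{n-1}_{\uhX}(\log \uE)(-(i+1)\uE)\bigr) \to H^1\bigl(\uhX; \Omega^{n-1}_{\uhX}(\log \uE)(-i\uE)\bigr)$$
is an isomorphism. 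Composing over $i=1,\dots,a-1$ and invoking the local version of Corollary~\ref{0.2.2} (which identifies the case $i=1$ with $H^0(X;T^1_X)$) gives the first asserted isomorphism $H^0(X; T^1_X) \cong H^1(\uhX; \Omega^{n-1}_{\uhX}(\log \uE)(-a\uE))$.

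For the second statement, I would take the same short exact sequence with $i=a$. The vanishing $H^0(\uhX; \Omega^{n-1}_{\uhX}(\log \uE)(-a\uE)|\uE)=0$ from Corollary~\ref{0.4.1}(i) yields, under the identification from the first step,
\begin{equation*}
0 \to H^1\bigl(\uhX; \Omega^{n-1}_{\uhX}(\log \uE)(-(a+1)\uE)\bigr) \to H^0(X;T^1_X) \xrightarrow{\rho} H^1\bigl(\uhX; \Omega^{n-1}_{\uhX}(\log \uE)(-a\uE)|\uE\bigr),
\end{equation*}
with one-dimensional target by Corollary~\ref{0.4.1}(iii). The key algebraic observation is $\pi^*\mathfrak{m}_x\subseteq \scrO_{\uhX}(-\uE)$: the pullback of any function in $\mathfrak{m}_x$ vanishes along $\uE$, since $\uhX$ is obtained from $X$ by blowing up $\mathfrak{m}_x$ in the weighted stacky sense. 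Multiplication by any $s\in\mathfrak{m}_x$ therefore sends $\Omega^{n-1}_{\uhX}(\log\uE)(-a\uE)$ into its subsheaf $\Omega^{n-1}_{\uhX}(\log\uE)(-(a+1)\uE)$, giving
$$\mathfrak{m}_x\cdot H^0(X;T^1_X)\subseteq H^1\bigl(\uhX; \Omega^{n-1}_{\uhX}(\log \uE)(-(a+1)\uE)\bigr) = \ker\rho.$$

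Since $T^1_{X,x}$ is a nonzero cyclic $\scrO_{X,x}$-module (cokernel of the Jacobian), Nakayama gives $\dim H^0(X;T^1_X)/\mathfrak{m}_x H^0(X;T^1_X)=1$. Granting surjectivity of $\rho$, $\ker\rho$ is also codimension one in $H^0(X;T^1_X)$, and the containment above forces $\ker\rho=\mathfrak{m}_x H^0(X;T^1_X)$, establishing the second assertion.

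The principal obstacle is the surjectivity of $\rho$: the Nakayama/containment argument does not by itself exclude the possibility $\rho\equiv 0$, in which case $H^1(\uhX;\Omega^{n-1}_{\uhX}(\log\uE)(-(a+1)\uE))$ would equal all of $H^0(X;T^1_X)$ and the codimension comparison would fail. I would establish surjectivity by exploiting the residual $\Cee^*$-action on the weighted cone: all the sheaves and cohomology groups in sight are naturally graded by weight, $\rho$ preserves the grading, and the one-dimensional target -- identified via Lemma~\ref{PRseq} and the calculation of Lemma~\ref{0.4} with $H^1(\uE;\Omega^{n-2}_{\uE}(a))$ -- lies in a specific weight matching that of the generator $1\in R/J$ of the Milnor algebra. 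Chasing this generator through the climb-up of the first step produces a class in $H^1(\uhX;\Omega^{n-1}_{\uhX}(\log\uE)(-a\uE))$ whose restriction to $\uE$ is a generator of the one-dimensional group, proving $\rho\ne 0$ and hence, by one-dimensionality of the target, its surjectivity.
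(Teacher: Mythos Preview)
Your argument is correct and follows essentially the same path as the paper's. The climb-up for the first isomorphism is identical. For the second statement, both you and the paper reduce to showing that the restriction map $\rho$ is nonzero, and both invoke the $\Cee^*$-grading to do so.

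The one difference worth noting is organizational. You first establish the containment $\mathfrak{m}_x H^0(X;T^1_X)\subseteq\ker\rho$ via the sheaf-theoretic fact $\pi^*\mathfrak{m}_x\subseteq\scrO_{\uhX}(-\uE)$, and then separately argue surjectivity of $\rho$ by weight considerations. The paper instead carries out the weight decomposition in full: writing $\Omega^{n-1}_{\uhX}(\log\uE)=\rho^*W$ with $\rho\colon\uhX\to\uE$ the projection and $\scrO_{\uhX}(-\uE)=\rho^*\scrO_{\uE}(1)$, one gets
\[
H^1\bigl(\uhX;\Omega^{n-1}_{\uhX}(\log\uE)(-i\uE)\bigr)=\bigoplus_{r\ge i}H^1(\uE;W(r))=\bigoplus_{r\ge -N+i}H^0(X;T^1_X)(r),
\]
and the restriction to $\uE$ is simply projection onto the bottom summand $H^1(\uE;W(i))$. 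For $i=a$ this summand is the weight $-d$ piece (the class of $1$ in the Milnor algebra), nonzero by Corollary~\ref{0.4.1}(iii), giving surjectivity; for $i=a+1$ the decomposition yields $\bigoplus_{r\ge -d+1}H^0(X;T^1_X)(r)=\mathfrak{m}_xH^0(X;T^1_X)$ directly. So once the weight computation is done, it simultaneously proves surjectivity and identifies the kernel, making your separate containment step redundant---though it is a pleasant, coordinate-free way to see one inclusion. Your sketch of the surjectivity step (``chasing the generator through the climb-up'') is exactly this projection-onto-the-bottom-summand picture, and would need the explicit decomposition above to be made precise.
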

 \begin{proof} For the first part, we have an exact sequence
 $$0 \to \Omega^{n-1}_{\uhX}(\log \uE) ( -(i+1)\uE) \to \Omega^{n-1}_{\uhX}(\log \uE) ( -i\uE) \to \Omega^{n-1}_{\uhX}(\log \uE) ( -i\uE) |\uE   \to 0.$$
 Thus, by Corollary~\ref{0.4.1}, for $1\le i< a$ we have an isomorphism 
 $$H^1(\uhX; \Omega^{n-1}_{\uhX}(\log \uE) ( -(i+1)\uE) \to H^1(\uhX; \Omega^{n-1}_{\uhX}(\log \uE) ( -i\uE))$$
 and by induction, starting with the isomorphism $H^0(X;T^1_X)\cong H^1(\uhX; \Omega^{n-1}_{\uhX}(\log \uE)(-\uE))$ of Lemma~\ref{0.2} and Corollary~\ref{0.2.2}, we see that $H^0(X;T^1_X) \cong  H^1(\uhX; \Omega^{n-1}_{\uhX}(\log \uE) ( -a\uE) )$.  
 
 To see the final statement, we have an exact sequence
 $$H^1(\uhX; \Omega^{n-1}_{\uhX}(\log \uE)( -(a+1)\uE)) \to H^1(\uhX; \Omega^{n-1}_{\uhX}(\log \uE)( -a\uE))\to H^1(\uhX;\Omega^{n-1}_{\uhX}(\log \uE)  ( -a\uE) |\uE),$$
 and hence an injection 
 $$H^1(\uhX; \Omega^{n-1}_{\uhX}(\log \uE) ( -a\uE))\Big/\im H^1(\uhX; \Omega^{n-1}_{\uhX}(\log \uE) ( -(a+1)\uE))\to H^1(\uhX;\Omega^{n-1}_{\uhX}(\log \uE)  ( -a\uE) |\uE).$$
 By Corollary~\ref{0.4.1}(iii),  $\dim H^1(\uE;\Omega^{n-1}_{\uhX}(\log \uE) ( -a\uE) |\uE) = 1$. Thus, if the map 
 $$H^0(X;T^1_X)/\mathfrak{m}_xH^0(X;T^1_X) \to H^1(\uE;\Omega^{n-1}_{\uhX}(\log \uE)  ( -a\uE) |\uE)$$ is nonzero, it is an isomorphism. However, to prove that this map is nonzero, it is necessary to consider the $\Cee^*$ picture as in \cite[\S3]{FL}: The vector bundle $\Omega^{n-1}_{\uhX}(\log \uE) $ on  $\uhX$ is of the form $\rho^*W$ for some vector bundle $W$ on $\uE$, where $\rho\colon \uhX \to \uE$ is the natural morphism, and $\scrO_{\uhX}(-\uE) = \rho^*\scrO_{\uE}(1)$.  Then
 $$ H^1(\uhX;\Omega^{n-1}_{\uhX}(\log \uE)) = \bigoplus_{r\ge 0}H^1(\uE; W(r)) = \bigoplus_{r\ge -N}H^0(X;T^1_X)(r) = \bigoplus_{r\ge -d}H^0(X;T^1_X)(r).$$
Here, the final equality holds because $-d$ is the smallest weight occurring in $H^0(X;T^1_X)$ and $-N = -dk \le -d$.  Note also that $\bigoplus_{r\ge -d+1}H^0(X;T^1_X)(r) = \mathfrak{m}_xH^0(X;T^1_X)$. 
 Taking the tensor product with $\scrO_{\uhX}(-i\uE)$ has the effect of shifting the weight spaces by $i$ since 
 $$\rho^*W \otimes \scrO_{\uhX}(-i\uE) \cong \rho^*W \otimes \rho^*\scrO_{\uE}(i) = \rho^*(W\otimes \scrO_{\uE}(i)).$$
  Thus 
 $$ H^1(\uhX;\Omega^{n-1}_{\uhX}(\log \uE)( -i\uE)) = \bigoplus_{r\ge 0}H^1(\uE; W(r+i)) = \bigoplus_{r\ge i}H^1(\uE; W(r))= \bigoplus_{r \ge -N+i}H^0(X;T^1_X)(r).$$
Here $-d \ge -N + i$ $\iff$ $i \le N -d = a$. This recovers the fact  that $H^1(\uhX; \Omega^{n-1}_{\hX}(\log \uE)( -i\uE)) \cong H^0(X;T^1_X)$ for $i \le a$, whereas 
$$H^1(\uhX; \Omega^{n-1}_{\uhX}(\log \uE) ( -(a+1)\uE)) = \bigoplus_{r \ge -d+1}H^0(X;T^1_X)(r) = \mathfrak{m}_xH^0(X;T^1_X)$$
as claimed. 
  \end{proof} 
  
\subsection{Definition of the nonlinear map}  We now consider the analogue of \cite[ Lemma 4.10]{RollenskeThomas}. First, we have the subsheaf $T_{\uhX}(-\log \uE) \subseteq T_{\uhX}$ (on the stack $\uhX$) which is the kernel of the map $T_{\uhX}\to N_{\uE/\uhX}$ or equivalently is dual to $\Omega^1_{\uhX}(\log \uE)$.    There is thus a commutative diagram
 $$\begin{CD}
 T_{\uhX}(-\log \uE) @>>> T_{\uhX}\\
 @V{\cong}VV @VV{\cong}V \\
  \Omega^{n-1}_{\uhX}(\log \uE)(-\uE)  \otimes K_{\uhX}^{-1} @>>>  \Omega^{n-1}_{\uhX}  \otimes K_{\uhX}^{-1}
 \end{CD}$$
  There are compatible isomorphisms 
 \begin{align*}
 T_{\uhX}(d\uE) &\cong \Omega^{n-1}_{\uhX}\otimes K_{\uhX}^{-1} \otimes \scrO_{\uhX}(  d\uE)  =  \Omega^{n-1}_{\uhX}((d-dk +1)\uE)=    \Omega^{n-1}_{\uhX}(-a\uE + \uE);\\
 T_{\uhX}(-\log \uE)(d\uE)  &\cong \Omega^{n-1}_{\uhX}(\log\uE)(-\uE) \otimes K_{\uhX}^{-1} \otimes \scrO_{\uhX}(  d\uE)  \\
 &=   \Omega^{n-1}_{\uhX}(\log \uE)((d-dk )\uE)=    \Omega^{n-1}_{\uhX}(\log \uE)(-a\uE).
 \end{align*}
 Taking $k^{\text{\rm{th}}}$ exterior powers,    $\bigwedge ^kT_{\uhX}$ is dual to $\Omega^k_{\uhX}$ and hence is isomorphic to $\Omega^{n-k}_{\uhX}\otimes K_{\uhX}^{-1} $ and  $\bigwedge ^kT_{\uhX}(-\log \uE)$ is dual to $\Omega^k_{\uhX}(\log \uE)$ and hence is isomorphic to $\Omega^{n-k}_{\uhX}(\log \uE)(-\uE)\otimes K_{\uhX}^{-1} $.  There are compatible isomorphisms
  \begin{gather*} 
   \bigwedge ^k(T_{\uhX}(d\uE)) \cong \Omega^{n-k}_{\uhX}\otimes K_{\uhX}^{-1}\otimes\scrO_{\uhX}(dk\uE)= \Omega^{n-k}_{\uhX}\otimes\scrO_{\uhX}(\uE);   \\
  \bigwedge ^k(T_{\uhX}(-\log \uE)(d\uE)) \cong \Omega^{n-k}_{\uhX}(\log \uE)(-\uE)\otimes K_{\uhX}^{-1}\otimes\scrO_{\uhX}(dk\uE)= \Omega^{n-k}_{\uhX} (\log \uE).
    \end{gather*}

 So we have a commutative diagram
 $$\begin{CD}
 \bigwedge ^k(T_{\uhX}(-\log \uE)(d\uE)) \cong  \bigwedge ^k\left(  \Omega^{n-1}_{\uhX}(\log \uE)(-a\uE)\right) @>>>  \bigwedge ^k\left(  \Omega^{n-1}_{\uhX}(\log \uE)(-a\uE)|\uE\right)\\
 @V{\cong}VV   @VV{\cong}V \\
 \Omega^{n-k}_{\uhX} (\log \uE) @>>> \Omega^{n-k}_{\uhX} (\log \uE)|\uE.
 \end{CD}$$
  There is also the induced map $ T_{\uhX}(-\log \uE) \to T_{\uE}$, and the following commutes:
  $$\begin{CD}
  T_{\uhX}(-\log \uE) @>>> T_{\uE} \\ 
  @V{\cong}VV @VV{\cong}V\\
 \Omega^{n-1}_{\uhX}(\log \uE)(-\uE) \otimes  K_{\uhX}^{-1} @>{\operatorname{Res}}>> \Omega^{n-2}_{\uE} \otimes  K_{\uE}^{-1},
\end{CD}$$
using the adjunction isomorphism $\scrO_{\uhX}(-\uE)\otimes  K_{\uhX}^{-1}|\uE = ( K_{\uhX} \otimes \scrO_{\uhX}(\uE))^{-1}|\uE \cong K_{\uE}^{-1}$.

  The  exact sequence of Lemma~\ref{PRseq} yields an exact sequence
$$0 \to \Omega^{n-1}_{\uE}(a) \to \Omega^{n-1}_{\uhX}(\log \uE)(-a\uE)|\uE \to \Omega^{n-2}_{\uE} (a)\to 0.$$
By Lemma~\ref{0.4}, there is an induced  isomorphism $H^1(\uE; \Omega^{n-1}_{\uhX}(\log \uE)(-a\uE)|\uE) \to H^1(\uE; \Omega^{n-2}_{\uE} (a))$. Moreover, 
$$\Omega^{n-2}_{\uE} (a) \cong T_{\uE} \otimes K_{\uE}(a) = T_{\uE}(-kd+a) = T_{\uE}(-d).$$
Taking $k^{\text{th}}$ exterior powers,
$$\bigwedge^k(T_{\uE}(-d)) = \left(\bigwedge^kT_{\uE}\right)(-kd) = \left(\bigwedge^kT_{\uE}\right)\otimes K_{\uE} \cong \Omega^{n-k-1}_{\uE}.$$
A combination of wedge product and cup product induces  symmetric homogeneous  degree $k$ maps 
\begin{gather*}
\nu_{\uhX}\colon H^1(\uhX; \Omega^{n-1}_{\uhX}(\log \uE)(-a\uE)) \to H^k(\uhX;\Omega^{n-k}_{\uhX} (\log \uE));\\
\mu_{\uhX}= \nu_{\uhX} |\uE\colon H^1(\uE; \Omega^{n-1}_{\uhX}(\log \uE)(-a\uE)|\uE) \to H^k(\uE;\Omega^{n-k}_{\uhX} (\log \uE)|\uE),
\end{gather*}
 and a commutative diagram  (with nonlinear vertical maps)
 
$$\begin{CD}
 H^1(\uhX; \Omega^{n-1}_{\uhX}(\log \uE)(-a\uE)) @>>> H^1(\uE; \Omega^{n-1}_{\uhX}(\log \uE)(-a\uE)|\uE) \\
@V{\nu_{\uhX}}VV @VV{\mu_{\uhX}}V\\
 H^k(\uhX;\Omega^{n-k}_{\uhX} (\log \uE)) @>>> H^k(\uE;\Omega^{n-k}_{\uhX} (\log \uE)|\uE) .
\end{CD}$$
There are similarly  compatible  symmetric homogeneous  degree $k$ maps 
\begin{gather*}
\nu_{\uhX}' \colon H^1(\uhX; T_{\uhX}(-\log \uE)(d\uE) ) \to H^k(\uhX; \bigwedge ^k(T_{\uhX}(-\log \uE)(d\uE)) )\cong H^k(\uhX;\Omega^{n-k}_{\uhX} (\log \uE));\\
\mu_{\uhX}'\colon H^1(\uE; T_{\uE}(-d)) \cong  H^1(\uE; \Omega^{n-2}_{\uE} (a)) \to H^k(\uE; \bigwedge^k(T_{\uE}(-d))) \cong H^k(\uE; \Omega^{n-k-1}_{\uE}).
\end{gather*}
The following  diagram with nonlinear vertical maps commutes:
$$\begin{CD}
 H^1(\uE; \Omega^{n-1}_{\uhX}(\log \uE)(-a\uE)|\uE)  @>{\cong}>> H^1(\uE; T_{\uE}(-d)) \cong  H^1(\uE; \Omega^{n-2}_{\uE} (a))\\
 @V{\mu_{\uhX}}VV @VV{\mu_{\uhX}'}V\\
H^k(\uE;\Omega^{n-k}_{\uhX} (\log \uE)|\uE) @>{\operatorname{Res}}>> H^k(\uE; \bigwedge^k(T_{\uE}(-d))) \cong H^k(\uE; \Omega^{n-k-1}_{\uE}).
 \end{CD}$$

By Lemma~\ref{0.4}, Corollary~\ref{0.4.1}(iii) and Theorem~\ref{introlink}, 
\begin{align*}
\dim H^1(\uE; \Omega^{n-1}_{\uhX}(\log \uE)(-a\uE)|\uE) &= \dim  H^1(\uE; T_{\uE}(-d))=1;\\
\dim H^k(\uE;\Omega^{n-k}_{\uhX} (\log \uE)|\uE) &= \dim \Gr^{n-k}_FH^n(L) = 1.
\end{align*}
The map $H^k(\uhX;\Omega^{n-k}_{\uhX} (\log \uE)) \to H^k(\uE; \Omega^{n-k-1}_{\uE})$ factors through the (surjective) map 
$$H^k(\uhX;\Omega^{n-k}_{\uhX} (\log \uE)) \to H^k(\uE;\Omega^{n-k}_{\uE} (\log \uE)|\uE) = \Gr^{n-k}_FH^n(L),$$   and the map 
\begin{align*} H^k(\uE;\Omega^{n-k}_{\uhX} (\log \uE)|\uE) &= \Gr^{n-k}_FH^n(L) \\
\to H^k(\uE;\Omega^{n-k-1}_{\uE}) &= \Gr^{n-k-1}_FH^{n-1}(E) = \Gr^{n-k}_FH^{n-1}(E)(-1)\end{align*}
is an isomorphism in almost all cases. More precisely, let $H_0^{n-1}(E)$ be the primitive cohomology of $E$ in dimension $n-1$ and let  $ H_0^{n-1-k}(\uE;\Omega^k_{\uE}) = \Gr^k_FH_0^{n-1}(E)$ be the corresponding groups. 

\begin{lemma}\label{lemma2.4}  If $X$ is not an ordinary double point or if $n$ is even, then the map $\Gr^{n-k}_FH^n(L )\to H^k(\uE;\Omega^{n-k-1}_{\uE})$ is an isomorphism, and hence 
$$\dim H^k(\uE;\Omega^{n-k-1}_{\uE}) = 1.$$ 
If   $X$ is an ordinary double point and $n=2k+1$ is odd, then 
$$\Gr^{n-k}_FH^n(L )\to H^k(\uE;\Omega^{n-k-1}_{\uE})=\Gr^{n-k-1}_FH^{n-1}(E)$$ is injective with image     $ H_0^k(\uE;\Omega^k_{\uE}) =\Cee([A]-[B])$, and hence $\dim H_0^k(\uE;\Omega^k_{\uE}) = 1$ . 
\end{lemma}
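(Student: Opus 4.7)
The strategy is to derive the result from the Poincar\'e residue sequence of Lemma~\ref{PRseq},
$$0 \to \Omega^{n-k}_{\uE} \to \Omega^{n-k}_{\uhX}(\log \uE)|\uE \to \Omega^{n-k-1}_{\uE} \to 0,$$
by taking the associated long exact sequence in cohomology. Using the Hodge-theoretic identifications $H^i(\uE; \Omega^j_{\uE}) = \Gr^j_F H^{i+j}(\uE)$ (valid on the smooth orbifold $\uE$) and $H^k(\uE; \Omega^{n-k}_{\uhX}(\log\uE)|\uE) = \Gr^{n-k}_F H^n(L)$, the relevant portion reads
$$\Gr^{n-k}_F H^n(\uE) \to \Gr^{n-k}_F H^n(L) \to \Gr^{n-k-1}_F H^{n-1}(\uE) \to \Gr^{n-k}_F H^{n+1}(\uE),$$
so it suffices to compute the two outer terms and, when they fail to vanish, to identify the connecting map.

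I would then apply Lefschetz on the smooth orbifold hypersurface $\uE \subset \uP^n$ of dimension $n-1$. Below the middle degree, Lefschetz hyperplane gives $H^j(\uE) \cong H^j(\uP^n)$, which is $0$ in odd degree and spanned by $h^{j/2}$ in even degree; above the middle, hard Lefschetz identifies $H^j(\uE)$ with $L^{j-(n-1)}H^{2(n-1)-j}(\uE)$. A bookkeeping of Hodge types then shows that $H^{n-k,k}(\uE)$ can be nonzero only if $n = 2k$, and $H^{n-k,k+1}(\uE)$ only if $n = 2k+1$. The bound $k\le \lfloor (n-1)/2\rfloor$ from Lemma~\ref{ex0.4.1} rules out $n=2k$, so $\Gr^{n-k}_F H^n(\uE)$ vanishes in every case, making the map $\Gr^{n-k}_F H^n(L) \to \Gr^{n-k-1}_F H^{n-1}(\uE)$ always injective. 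If in addition $n>2k+1$, the right-hand term vanishes and the map is an isomorphism; this handles the first assertion.

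In the remaining edge case $n = 2k+1$, Lemma~\ref{ex0.4}(ii) forces $X$ to be an ordinary double point. The exact sequence presents $\Gr^{n-k}_FH^n(L) \hookrightarrow H^{k,k}(\uE)$ with image equal to the kernel of the connecting map into $\Gr^{k+1}_F H^{2k+2}(\uE) = \Cee\cdot h^{k+1}$. Tracing through the adjunction identification $\scrO_{\uhX}(\uE)|\uE \cong \scrO_{\uE}(-1)$ shows that this connecting map is a nonzero scalar multiple of cup product with $h$. Under the Lefschetz decomposition $H^{k,k}(\uE) = H^{k,k}_0(\uE) \oplus \Cee\cdot h^k$, cup product with $h$ kills the primitive part and sends $h^k$ to $h^{k+1}\neq 0$, so the image is $H^k_0(\uE;\Omega^k_{\uE})$. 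For a smooth quadric $\uE$ of dimension $2k$ (the exceptional divisor of an ordinary double point in this dimension), $H^{k,k}(\uE) = \Cee[A]\oplus \Cee[B]$ with $h^k = [A]+[B]$, so the primitive part is $\Cee([A]-[B])$, one-dimensional in agreement with Theorem~\ref{introlink}.

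The main technical point I expect to require care is the verification that the connecting map in the cohomology long exact sequence is indeed a nonzero multiple of cup product with $h$; this should reduce to identifying the Poincar\'e residue extension with the normal-bundle extension and applying the adjunction formula. The Lefschetz theorems in the orbifold setting present no genuine difficulty, as $\uE$ has at worst cyclic quotient singularities on its coarse moduli space, so the standard Lefschetz package applies with $\Cee$-coefficients.
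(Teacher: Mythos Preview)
Your argument is correct and follows the same route as the paper: take the long exact sequence from the Poincar\'e residue sequence (Lemma~\ref{PRseq}), observe that for a weighted hypersurface $\uE\subset\uP^n$ one has $\Gr^i_FH^j(\uE)=0$ unless $j=2i$ or $i+j=n-1$, and conclude that the outer terms vanish except possibly when $n=2k$ or $n=2k+1$. One small correction: the bound $k\le\lfloor(n-1)/2\rfloor$ that rules out $n=2k$ comes from Lemma~\ref{ex0.4}(ii) (a $k$-Du Bois singularity with $k>\tfrac12(n-1)$ is smooth), not from Lemma~\ref{ex0.4.1}, which is only an existence statement. In the ordinary double point case you give more detail than the paper does, explicitly identifying the connecting map with cup product by the hyperplane class via the normal-bundle extension; the paper simply appeals to the ``well-known computation of the primitive cohomology of an even-dimensional quadric.''
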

\begin{proof} As noted in Definition~\ref{def0.3}, if $L$ is the link of the singularity, then $\dim \Gr^{n-k}_FH^n(L) =1$.  Then Lemma~\ref{PRseq} gives the exact sequence
$$\Gr^{n-k}_FH^n(E) \to \Gr^{n-k}_FH^n(L) \to \Gr^{n-k-1}_FH^{n-1}(E) \to \Gr^{n-k}_FH^{n+1}(E).$$
Since $E$ is an orbifold  weighted hypersurface in $WP^n$, $\Gr^i_FH^j(E)=0$ except for the cases $j=2i$ or $i+j = n-1$. Thus  $\Gr^{n-k}_FH^n(E) = \Gr^{n-k-1}_FH^{n-1}(E) =0$ unless $n = 2(n-k)$, i.e.\ $k=\frac12n$, or $n+1 = 2(n-k)$, i.e.\ $k=\frac12(n-1)$. The first case is excluded since we assumed that $X$ is a singular point and the second case only arises if $n=2k+1$ and $X$ is an ordinary double point (Lemma~\ref{ex0.4}). This proves the first statement, and the second statement is the well-known computation of the primitive cohomology of an even-dimensional quadric. 
\end{proof}

 \begin{proposition}\label{0.9} The map $\mu_{\uhX}$ is not $0$. Hence there exist bases  $v\in H^1(\uE; \Omega^{n-1}_{\uhX}(\log \uE)(-a\uE)|\uE)$ and $\varepsilon \in H^k(\uE;\Omega^{n-k}_{\uhX} (\log \uE)|\uE)$ of the two one-dimensional vector spaces and a nonzero $c\in \Cee$ such that, for all $\lambda \in \Cee$,
 $$\mu_{\uhX}(\lambda v) = c\lambda^k\varepsilon.$$
 \end{proposition}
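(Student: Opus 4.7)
My plan is to reduce Proposition~\ref{0.9} to the nonvanishing of a single $k$-fold cup product on $\uE$. The \emph{reduction step} exploits the two commutative diagrams set up just before the statement: via the residue $\Omega^{n-k}_{\uhX}(\log\uE)|\uE \twoheadrightarrow \Omega^{n-k-1}_{\uE}$, the isomorphism $H^1(\uhX;\Omega^{n-1}_{\uhX}(\log\uE)(-a\uE)|\uE) \cong H^1(\uE;T_{\uE}(-d))$ built from Lemma~\ref{PRseq} and Lemma~\ref{0.4}, and the injectivity of $\Gr^{n-k}_FH^n(L)\to H^k(\uE;\Omega^{n-k-1}_{\uE})$ from Lemma~\ref{lemma2.4}, the nonvanishing of $\mu_{\uhX}$ becomes equivalent to the nonvanishing of the $k$-fold cup product map
\[
\mu'_{\uhX}\colon H^1(\uE;T_{\uE}(-d)) \to H^k\bigl(\uE;\bigwedge\nolimits^{k}T_{\uE}(-d)\bigr) \cong H^k(\uE;\Omega^{n-k-1}_{\uE}).
\]
Since the source is one-dimensional and $\mu_{\uhX}$ is homogeneous of degree $k$, once $\mu'_{\uhX}$ is nonzero we automatically have $\mu_{\uhX}(\lambda v) = c\lambda^k\varepsilon$ with $c\in\Cee^*$ for appropriate bases $v, \varepsilon$.

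Next I would fix a distinguished generator $\xi\in H^1(\uE;T_{\uE}(-d))$. The twisted normal bundle sequence
\[
0 \to T_{\uE}(-d) \to T_{\uP^n}(-d)|\uE \to \scrO_{\uE} \to 0,
\]
together with the vanishings $H^0(\uE;T_{\uP^n}(-d)|\uE) = H^1(\uE;T_{\uP^n}(-d)|\uE) = 0$ (the same vanishings proved inside Lemma~\ref{0.4}), shows that the coboundary $\delta\colon H^0(\uE;\scrO_{\uE}) \to H^1(\uE;T_{\uE}(-d))$ is an isomorphism, and I set $\xi := \delta(1)$. Then $\mu'_{\uhX}(\lambda\xi) = \lambda^{k}\xi^{\cup k}$, and the proposition reduces to the claim that $\xi^{\cup k}\neq 0$ in $H^k(\uE;\bigwedge^{k}T_{\uE}(-d))$.

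The \emph{core of the argument}, and the step I expect to be the main obstacle, is this final nonvanishing. I would proceed inductively using the two-step filtration
\[
0 \to \bigwedge\nolimits^{j+1}T_{\uE}(-d) \to \bigwedge\nolimits^{j+1}\!\bigl(T_{\uP^n}(-d)|\uE\bigr) \to \bigwedge\nolimits^{j}T_{\uE}(-d) \to 0
\]
coming from the sub-line $\scrO_{\uE}\hookrightarrow T_{\uP^n}(-d)|\uE$: its coboundary on cohomology is, up to a nonzero combinatorial factor, cup product with $\xi$, so that $\xi^{\cup k}$ is the image of $1 \in H^0(\uE;\scrO_{\uE})$ after $k$ iterated coboundaries. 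It then suffices to show $H^j(\uE;\bigwedge^{j+1}(T_{\uP^n}(-d)|\uE)) = H^{j+1}(\uE;\bigwedge^{j+1}(T_{\uP^n}(-d)|\uE)) = 0$ for $0\le j\le k-1$. For this one takes the $(j+1)$-st exterior power of the twisted Euler sequence $0 \to \scrO_{\uE}(-d) \to \bigoplus_{i}\scrO_{\uE}(a_i-d) \to T_{\uP^n}(-d)|\uE \to 0$, which produces a Koszul-type filtration of $\bigwedge^{j+1}(T_{\uP^n}(-d)|\uE)$ whose graded pieces are direct sums of line bundles $\scrO_{\uE}(m)$ with $m<0$ (since $a_i<d$ whenever $X$ is not smooth); the required vanishings follow from exactly the same kind of line bundle cohomology arguments used in the proof of Lemma~\ref{0.4}.

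The delicate part is the combinatorics of the last step: one must verify that every line bundle summand appearing in the Koszul filtration lies in the vanishing range simultaneously in degrees $j$ and $j+1$ across the whole interval $0\le j\le k-1$, and that the combinatorial constants identifying each coboundary with $\cup\xi$ are actually nonzero rather than accidentally zero. If an edge case resists this direct attack, an alternative worth trying is to invoke the Jacobian-ring description of primitive cohomology of weighted hypersurfaces: under the $k$-liminal hypothesis $\sum a_i = (k+1)d$ one has $H^{n-1-k,k}_0(\uE)\cong R_0 = \Cee$, where $R = \Cee[z_1,\dots,z_{n+1}]/J_f$, so any identification of $\xi^{\cup k}$ as a nonzero multiple of the unit in $R_0$ would finish the proof.
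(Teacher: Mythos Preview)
Your proposal follows the paper's proof essentially step for step: reduce to $\mu'_{\uhX}\ne 0$, take $\xi=\partial_0(1)$ as generator, identify $\xi^{\cup k}$ with the iterated connecting maps of the exterior-power normal bundle sequences, and prove these maps are injective by killing the middle cohomology. The only differences are minor: the paper asks just for $H^j\bigl(\uE;\bigwedge^{j+1}(T_{\uP^n}(-d)|\uE)\bigr)=0$ for $1\le j\le k-1$ (injectivity of $\partial_j$ is all that is needed, and in fact your extra requirement $H^{j+1}=0$ fails at $j=k-1$ when $n=2k+1$), and it obtains this vanishing by identifying $\bigwedge^{j+1}T_{\uP^n}(-(j+1)d)$ with $\Omega^{n-j-1}_{\uP^n}\bigl(\sum_i a_i-(j+1)d\bigr)$ and invoking Bott vanishing on $\uP^n$, which is cleaner than threading through your Koszul--Euler filtration.
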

 \begin{proof} It suffices to prove that the map $\mu_{\uhX}'$ is nonzero. Taking the $(i+1)^{\text{\rm{st}}}$ exterior power  of the normal bundle sequence 
 $$0 \to T_{\uE}(-d) \to (T_{\uP^n}|\uE)(-d) \to \scrO_{\uE}  \to 0$$ gives   exact sequences
 \begin{equation*}
 0 \to  \bigwedge^{i+1}T_{\uE} (-(i+1)d)\to  \bigwedge^{i+1}(T_{\uP^n}|\uE) (-(i+1)d) \to  \bigwedge^iT_{\uE} (-id)\to 0,\tag{$*$}
 \end{equation*}
 and thus a sequence of connecting homomorphisms
 $$\partial_i \colon H^i(\uE;  \bigwedge^iT_{\uE} (-id)) \to H^{i+1}(\uE;  \bigwedge^{i+1}T_{\uE} (-(i+1)d).$$
 We claim the following: 
 
 \begin{claim}\label{0.10}  There exists a nonzero element $\eta \in H^1(\uE;T_{\uE}(-d))$, necessarily a generator,  such that $\mu'_{\uhX}(\eta)  = \pm\partial_{k-1} \circ \cdots \circ   \partial_1(\eta)$.
 \end{claim}
 
 \begin{claim}\label{0.11} The connecting homomorphism $\partial_i$ is an isomorphism for $1\leq i \leq k-2$ and injective for $i=k-1$.
 \end{claim}
 Clearly the two claims imply Proposition~\ref{0.9}.
 \renewcommand{\qedsymbol}{}
 \end{proof}
 
 \begin{proof}[Proof of Claim~\ref{0.10}] The element $\eta =\partial_0(1)\in H^1(\uE;T_{\uE}(-d))$ is the extension class for the extension $0 \to T_{\uE}(-d) \to (T_{\uP^n} |\uE)(-d) \to \scrO_{\uE} \to 0$. By the last line of the proof of Lemma~\ref{0.4}, the coboundary map  $\partial_0$ is injective and hence $\eta \neq 0$. Then a calculation shows that, up to sign,  
 $$\wedge \eta \in H^1(\uE; Hom( \bigwedge^iT_{\uE} (-id),  \bigwedge^{i+1}T_{\uE} (-(i+1)d)))$$
 is the corresponding extension class for the extension $(*)$. Since the connecting homomorphism is given by cup product with the extension class, we see that 
 $$\mu'_{\uhX}(\eta) = \eta^k =  \pm\partial_{k-1} \circ \cdots \circ   \partial_1(\eta) \in H^k(\uE;  \bigwedge^kT_{\uE} (-kd)).\qed$$
 \renewcommand{\qedsymbol}{}
 \end{proof}
 
 \begin{proof}[Proof of Claim~\ref{0.11}] It suffices to show that $H^i(\uE;  \bigwedge^{i+1}(T_{\uP^n}|\uE) (-(i+1)d)) =0$ for $1\le i \leq k-1$. First note that
 $$ \bigwedge^{i+1}(T_{\uP^n}|\uE) (-(i+1)d)) = \Big(\Omega^{n-i-1}_{\uP^n}|\uE\Big)  \Big(\sum_ka_k -(i+1)d\Big).$$
 We have the exact sequence
 $$0 \to \Omega^\ell_{\uP^n}(r-d) \to \Omega^\ell_{\uP^n}(r)\to  (\Omega^\ell_{\uP^n}|\uE) (r) \to 0.$$
 By Bott vanishing (or directly), $H^j(\uE;  (\Omega^\ell_{\uP^n}|\uE) (r)) = 0$ as long as $1\le j\leq n-2$ and $j\neq \ell$ or $\ell+1$. In our situation, $i\leq k-1 < k \le \frac12(n-1)$, and thus $i < n-i-1$. In particular, $i \neq n-i-1$ or $n-i$. Thus $H^i(\uE;  \bigwedge^{i+1}(T_{\uP^n}|\uE) (-(i+1)d)) =0$. 
 \end{proof}
 
 \begin{remark} The above calculations are connected with the computation of the Hodge filtration on $E$. For example, in case $X$ is a cone over the smooth degree $d$ hypersurface $E$ in $\Pee^n$, then  $H^0(\Pee^n; K_{\Pee^n}\otimes (n-k)d)  =H^0(\Pee^n;\scrO_{\Pee^n}(-n-1 + d(k+1)) = H^0(\Pee^n;\scrO_{\Pee^n})$ is identified via residues with $F^{n-k}H^n(L) = \Gr_F^{n-k}H^n(L)$.
 \end{remark}

 \section{The global setting}\label{s3}
 
\subsection{Deformation theory}\label{ss3.1} We assume the following for the rest of this subsection:

\begin{assumption}\label{ass3.1} $Y$ is a canonical Calabi--Yau variety of dimension $n\ge 4$, all of whose singularities are $k$-liminal isolated weighted homogeneous hypersurface singularities, with $k \ge 2$, as the case $k=1$ has already been considered in Theorem~\ref{1limthm}.  We freely use  the notation of the previous sections, especially that of Definition~\ref{def1.8}. In particular, $Y^{\#}$ is the weighted blowup at each point $x$ of $Z=Y_{\text{\rm{sing}}}$, with exceptional divisor $E_x$, and $a_x$ is the integer defined in \S2.1.  We let $\uhY$ and $\uE = \sum_{x\in Z} \uE_x$ be the associated stacks. Let $\vec{a}\uE$ denote the divisor $\sum_{x\in Z}a_x\uE_x$. 
\end{assumption}

  The argument of Theorem~\ref{0.7.1} shows:
 
\begin{lemma} There is a commutative diagram
 $$\begin{CD}
 \mathbb{T}^1_Y  @>{\cong}>>  H^1(\uhY; \Omega^{n-1}_{\uhY}(\log \uE) ( -\vec{a}\uE)) \\
 @VVV @VVV\\
 H^0(Y; T^1_Y)@>{\cong}>> H^0(Y; R^1\pi_*\Omega^{n-1}_{\uhY}(\log \uE) ( -\vec{a}\uE)).\qed
 \end{CD}$$ 
\end{lemma}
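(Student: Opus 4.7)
The plan is to globalize Theorem~\ref{0.7.1} by an induction on the order of vanishing along $\uE$, starting from the base case provided by Corollary~\ref{0.2.2}. Since $Y$ satisfies Assumption~\ref{ass3.1} with $k\ge 2$, the singularities are in particular $1$-rational, so Corollary~\ref{0.2.2} yields an isomorphism $\mathbb{T}^1_Y \cong H^1(\uhY;\Omega^{n-1}_{\uhY}(\log\uE)(-\uE))$ which is compatible with the restriction map $\mathbb{T}^1_Y\to H^0(Y;T^1_Y)$. Thus the case $a=0$ (i.e.\ $i=1$) of the diagram is already known, and the global statement reduces to identifying $H^1(\uhY;\Omega^{n-1}_{\uhY}(\log\uE)(-\uE))$ with $H^1(\uhY;\Omega^{n-1}_{\uhY}(\log\uE)(-a\uE))$.

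For this, I would use the short exact sequences
$$0\to \Omega^{n-1}_{\uhY}(\log\uE)(-(i+1)\uE)\to \Omega^{n-1}_{\uhY}(\log\uE)(-i\uE)\to \Omega^{n-1}_{\uhY}(\log\uE)(-i\uE)|\uE\to 0$$
for $1\le i\le a-1$. The key observation is that $\uE$ is the disjoint union $\bigsqcup_{x\in Z}\uE_x$ of the exceptional divisors of the local weighted blowups, so the sheaves $\Omega^{n-1}_{\uhY}(\log\uE)(-i\uE)|\uE$ decompose as a direct sum over $x\in Z$ of the corresponding local sheaves. Hence the vanishings of Corollary~\ref{0.4.1}(i)--(ii) (which hold at each singular point) give
$$H^0(\uhY;\Omega^{n-1}_{\uhY}(\log\uE)(-i\uE)|\uE)=H^1(\uhY;\Omega^{n-1}_{\uhY}(\log\uE)(-i\uE)|\uE)=0$$
for $1\le i<a$. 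Feeding this into the long exact cohomology sequence produces isomorphisms
$$H^1(\uhY;\Omega^{n-1}_{\uhY}(\log\uE)(-(i+1)\uE))\xrightarrow{\ \cong\ } H^1(\uhY;\Omega^{n-1}_{\uhY}(\log\uE)(-i\uE))$$
for $1\le i<a$, and composing these gives $\mathbb{T}^1_Y\cong H^1(\uhY;\Omega^{n-1}_{\uhY}(\log\uE)(-a\uE))$, which is the top row of the diagram.

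For the bottom row, the sheaf $R^1\pi_*\Omega^{n-1}_{\uhY}(\log\uE)(-a\uE)$ is supported on $Z$, and its stalk at each singular point $x$ is computed by the corresponding local cohomology group $H^1(\uhX_x;\Omega^{n-1}_{\uhX_x}(\log\uE_x)(-a\uE_x))$, which by Theorem~\ref{0.7.1} is identified with $H^0(X_x;T^1_{X_x})$. Summing over $x\in Z$ and comparing with $H^0(Y;T^1_Y)=\bigoplus_{x\in Z}H^0(X_x;T^1_{X_x})$ gives the bottom isomorphism (interpreting the target via the $E_2^{0,1}$-term of the Leray spectral sequence for $\pi$). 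Finally, commutativity of the diagram is a consequence of the naturality of the local-to-global spectral sequence together with the compatibility already built into Corollary~\ref{0.2.2} and preserved at each step of the induction.

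The main technical point to be careful about is the reduction of the global vanishings to the local ones of Corollary~\ref{0.4.1}; this works cleanly because $\uE$ is a disjoint union over the singular points, so the restricted sheaves split as direct sums and their cohomology is computed singularity by singularity. Everything else is formal manipulation of long exact sequences and naturality.
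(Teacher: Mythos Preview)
Your proposal is correct and follows essentially the same approach as the paper, which simply notes that ``the argument of Theorem~\ref{0.7.1} shows'' the result. You have carried out precisely that globalization: starting from Corollary~\ref{0.2.2}, using the same filtration by order of vanishing along $\uE$, and reducing the needed vanishings to the local ones of Corollary~\ref{0.4.1} via the decomposition $\uE=\bigsqcup_{x\in Z}\uE_x$.
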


We also have the subsheaf $T_{\uhY}(-\log \uE) \subseteq T_{\uhY}$. As in \S2, globally there is an isomorphism
 $$ \bigwedge ^k\left(  \Omega^{n-1}_{\uhY}(\log \uE)(-\vec{a}\uE)\right) \cong \Omega^{n-k}_{\uhY} (\log \uE).$$

 Then the global form  of the discussion in \S\ref{s2}  yields:

 \begin{theorem}\label{mainCD} There is  a commutative diagram 
 $$\begin{CD}
H^1(\uhY; \Omega^{n-1}_{\uhY}(\log \uE) ( -\vec{a}\uE)) @>>> H^1(\uE;\Omega^{n-1}_{\uhY}(\log \uE)( -\vec{a}\uE)|\uE )\\
 @V{\nu_{\uhY}}VV @VV{\mu_{\uhY}}V  \\
 H^k(\uhY; \Omega^{n-k}_{\uhY}(\log \uE)) @>>>  H^k(\uE; \Omega^{n-k}_{\uhY}(\log\uE)|\uE) .\qed
 \end{CD}$$
 \end{theorem}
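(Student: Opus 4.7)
The plan is to read the square as the naturality of the construction of the maps $\nu$ and $\mu$ with respect to restriction along the closed immersion $\iota\colon \uE \hookrightarrow \uhY$. Both $\nu_{\uhY}$ and $\mu_{\uhY}$ are defined in two steps: (i) take the $k$-fold iterated cup product of a class in $H^1$, landing in $H^k$ of the $k$-th tensor power; and (ii) push forward along the canonical sheaf morphism
$$[\Omega^{n-1}_{\uhY}(\log \uE)(-a\uE)]^{\otimes k} \twoheadrightarrow \bigwedge^k [\Omega^{n-1}_{\uhY}(\log \uE)(-a\uE)] \cong \Omega^{n-k}_{\uhY}(\log \uE),$$
with the analogous morphism on $\uE$ defining $\mu_{\uhY}$. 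Commutativity of the square will follow once both steps are shown to commute with $\iota^*$.

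For the cup product step, $\iota^*$ is a natural transformation between the cohomology functors $H^\bullet(\uhY;-)$ and $H^\bullet(\uE;(-)|\uE)$, and standard naturality of cup product yields $(\theta^{\cup k})|\uE = (\theta|\uE)^{\cup k}$ for any $\theta \in H^1(\uhY; F)$ and any sheaf $F$. Specializing to $F = \Omega^{n-1}_{\uhY}(\log \uE)(-a\uE)$ handles the first of the two squares that must be checked.

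For the wedge/identification step, the antisymmetrization $F^{\otimes k} \twoheadrightarrow \bigwedge^k F$ is a morphism of coherent sheaves whose formation commutes with base change, so it restricts to the analogous morphism on $\uE$. The further isomorphism $\bigwedge^k [\Omega^{n-1}_{\uhY}(\log \uE)(-a\uE)] \cong \Omega^{n-k}_{\uhY}(\log \uE)$ was built in Section~\ref{s2} as a composition of canonical isomorphisms coming from the duality between $\Omega^{n-1}$ and $T\otimes K^{-1}$, combined with the identity $K_{\uhY} = \scrO_{\uhY}((dk-1)\uE)$. Each of these ingredients is a sheaf morphism defined globally on $\uhY$, so its restriction to $\uE$ coincides with the corresponding identification used to define $\mu_{\uhY}$. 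The main technical checkpoint is precisely this compatibility under restriction, which boils down to the commutativity of the residue/adjunction diagram displayed just before Lemma~\ref{PRseq}: the Poincar\'e residue, the adjunction $K_{\uhY}\otimes \scrO_{\uhY}(\uE)|\uE \cong K_{\uE}$, and the exterior powers of all these morphisms are mutually compatible. Once this is in hand, the two routes around the square agree as sheaf-level operations and therefore descend to the same maps on cohomology, proving Theorem~\ref{mainCD}.
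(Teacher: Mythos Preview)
Your proposal is correct and is essentially the same approach as the paper's, which simply records the theorem as ``the global form of the discussion in \S\ref{s2}'' and places a \qed: you have merely unpacked what that sentence means, namely that $\nu$ and $\mu$ are built from cup product followed by a sheaf-level wedge/identification, both of which are natural with respect to restriction to $\uE$. One minor correction: the residue/adjunction diagram you invoke is displayed in \S2.3, well \emph{after} Lemma~\ref{PRseq}, not just before it; but this is only a cross-referencing slip and does not affect the argument.
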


 Here $\mu_{\uhY}$ is the sum of the local maps $\mu_{\uhX}$ at each component of $E$ and $\nu_{\uhY}$ is also a  homogeneous map of degree $k$. Note that,   after we localize at a singular point $x$ of $Y$,   
 $$\dim H^k(\uE_x; \Omega^{n-k}_{\uhX}(\log \uE_x)|\uE_x) = \dim \Gr^{n-k}_FH^n(L_x) = 1.$$
 By Corollary~\ref{0.4.1}(iii), $\dim H^1(\uE_x;\Omega^{n-1}_{\uhY}(\log \uE ) ( -a_x\uE)|\uE_x ) =1$ as well, and so the $\mu_{\uhY}$ in the diagram, at each singular point $x$ of $Y$, is a homogeneous degree $k$ map between two one-dimensional vector spaces.  For every $x\in Z$, fix  an isomorphism  $H^1(\uE_x;\Omega^{n-1}_{\uhY}(\log \uE ) ( -a_x\uE)|\uE_x ) \cong \Cee$, i.e.\ a basis vector $v_x \in  H^1(\uE_x;\Omega^{n-1}_{\uhY}(\log \uE ) ( -a_x\uE)|\uE_x ) $, and a basis vector $\varepsilon_x  \in \Gr^{n-k}_FH^n(L_x)$.  It follows by  Proposition~\ref{0.9} that, for every $x\in Z$,  there exists a  nonzero $c_x\in \Cee$, depending only on the above choices,  such that, for every $\lambda = (\lambda_x) \in \Cee^Z\cong H^1(\uE;\Omega^{n-1}_{\uhY}(\log \uE)( -\vec{a}\uE)|\uE )$,
 $$\mu_{\uhY}(\lambda) = \sum_{x\in Z} c_x\lambda_x^k \varepsilon_x.$$ 
 
 Consider the following diagram, where the vertical arrows are homogeneous of degree $k$ and the bottom row is exact:
 $$\begin{CD}
H^1(\Omega^{n-1}_{\uhY}(\log \uE)( -\vec{a}\uE)) @>>> H^1(\Omega^{n-1}_{\uhY}(\log \uE)( -\vec{a}\uE)|\uE )@. \\
 @V{\nu_{\uhY}}VV @VV{\mu_{\uhY}}V @. \\
 H^k( \Omega^{n-k}_{\uhY}(\log \uE)) @>>>  H^k(\Omega^{n-k}_{\uhY}(\log \uE)|\uE) @>{\partial}>>  H^{k+1}(\Omega^{n-k}_{\uhY}(\log \uE)(-\uE)).
 \end{CD}$$
The above diagram then implies the following: if a class 
$$\alpha = (\alpha_x) \in H^1(\uE; \Omega^{n-1}_{\uhY}(\log \uE)( -\vec{a}\uE)|\uE )$$ is   the image of $\beta \in H^1(\uhY; \Omega^{n-1}_{\uhY}(\log \uE)( -\vec{a}\uE))$, then $\mu_{\uhY}(\alpha)$ is   the image of $$\nu_{\uhY}(\beta) \in H^k(\uhY;\Omega^{n-k}_{\uhY}(\log \uE)), $$ and  hence $\partial(\mu_{\uhY}(\alpha)) = 0$ in $H^{k+1}(\Omega^{n-k}_{\uhY}(\log \uE)(-\uE))$.

Returning to the world of spaces, as opposed to stacks, consider  a log resolution $\pi \colon \hY \to Y$ with exceptional divisor which we continue to denote by $E$. Then via the isomorphism 
$$H^{k+1}(\uhY; \Omega^{n-k}_{\uhY}(\log \uE)(-\uE)) \cong H^{k+1}(\hY; \Omega^{n-k}_{\hY}(\log E)(-E))$$ of Lemma~\ref{stackisoms},    the coboundary $\partial(\mu_{\uhY}(\alpha))$ defines  an element of $H^{k+1}(\hY; \Omega^{n-k}_{\hY}(\log E)(-E))$. Moreover, if $\partial(\mu_{\uhX}(\alpha))$ is of the form  $\sum_{x\in Z} c_x\lambda_x^k \partial(\varepsilon_x)$, then it has the same form when viewed as an element of $H^{k+1}(\hY; \Omega^{n-k}_{\hY}(\log E)(-E))$, by the commutativity of the diagram 
$$\begin{CD}
H^k(\uE; \Omega^{n-k}_{\uhY}(\log \uE)|\uE) @>{\partial}>>  H^{k+1}(\uhY; \Omega^{n-k}_{\uhY}(\log \uE)(-\uE))\\
@V{\cong}VV  @VV{\cong}V\\
H^k(E; \Omega^{n-k}_{\hY}(\log E)|E) @>{\partial}>>  H^{k+1}(\hY; \Omega^{n-k}_{\hY}(\log E)(-E)).
\end{CD}$$

So finally we obtain:

\begin{theorem}\label{mainthma} For every  $x\in Z =Y_{\text{\rm{sing}}}$, fix    isomorphisms 
$$H^0(T^1_{Y,x})/\mathfrak{m}_xH^0(T^1_{Y,x}) \cong  H^1(E_x;\Omega^{n-1}_{\hX}(\log E_x)( -a_xE_x)|E _x) \cong \Cee.$$
Write $\Gr^{n-k}_FH^n(L_x) = H^k(E_x; \Omega^{n-k}_{\hY}(\log E_x)|E_x) =\Cee\cdot \varepsilon_x$ for some fixed choice of a generator $\varepsilon_x$.  For all $x\in Z$, there exist   $c_x\in \Cee^*$ depending only on the above identifications, with the following property: Suppose that the class $(\bar\theta_x) \in \bigoplus_{x\in Y_{\text{\rm{sing}}}}H^0(T^1_{Y,x})/\mathfrak{m}_xH^0(T^1_{Y,x}) $ is in the image of $\theta \in \mathbb{T}^1_Y$, and let $\lambda_x \in \Cee $ be the complex number  corresponding  to $\bar\theta_x$  via the above identification. 
If $\partial \colon H^k(E; \Omega^{n-k}_{\hY}(\log E)|E)\to H^{k+1}(\hY; \Omega^{n-k}_{\hY}(\log E)(-E))$ is the coboundary map, then
 $$\sum_{x\in Z} c_x\lambda_x^k \partial(\varepsilon_x) =0. \qed$$
 \end{theorem}   

 We can post-compose  the coboundary map 
 $$\partial \colon \Gr^{n-k}_FH^n(L) = H^k(E; \Omega^{n-k}_{\hY}(\log E)|E) \to H^{k+1}(\hY; \Omega^{n-k}_{\hY}(\log E)(-E))$$
 with the natural (injective) map 
 $$H^{k+1}(\hY; \Omega^{n-k}_{\hY}(\log E)(-E)) \to H^{k+1}(\hY; \Omega^{n-k}_{\hY}).$$
 Let $\varphi\colon \Gr^{n-k}_FH^n(L) = H^k(E; \Omega^{n-k}_{\hY}(\log E)|E) \to H^{k+1}(\hY; \Omega^{n-k}_{\hY})$ be the above composition. This is the same as the induced map on $\Gr^{n-k}_F$ of the natural map $H^n(L) \to H^{n+1}(\hY)$ which is the Poincar\'e dual of the map $H_{n-1}(L) \to H_{n-1}(\hY)$. 

\begin{corollary}\label{last}  With  the notation and hypotheses of Theorem~\ref{mainthma},  and with $\varphi\colon \Gr^{n-k}_FH^n(L) \to H^{k+1}(\hY; \Omega^{n-k}_{\hY})$ the natural map as above,  the following holds in $H^{k+1}(\hY; \Omega^{n-k}_{\hY})$:
$$\sum_{x\in Z} c_x\lambda_x^k\varphi(\varepsilon_x) =0.$$ 
In particular, if a strong first-order smoothing of $Y$ exists, then for all $x\in Z$ there exists $\lambda_x\in \Cee^*$  with $\sum_{x\in Z} c_x\lambda_x^k\varphi(\varepsilon_x) =0$. \qed
\end{corollary}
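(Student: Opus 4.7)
The plan is to deduce Corollary~\ref{last} directly from Theorem~\ref{mainthma} by pushing the relation stated there forward along the natural map
$$\psi \colon H^{k+1}(\hY; \Omega^{n-k}_{\hY}(\log E)(-E)) \to H^{k+1}(\hY; \Omega^{n-k}_{\hY})$$
induced by the inclusion of sheaves $\Omega^{n-k}_{\hY}(\log E)(-E) \hookrightarrow \Omega^{n-k}_{\hY}$. By the very definition of $\varphi$ given in the paragraph preceding the corollary statement, $\varphi$ is precisely the composition $\psi \circ \partial$, so $\psi(\partial(\varepsilon_x)) = \varphi(\varepsilon_x)$ for every singular point $x \in Z$. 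Theorem~\ref{mainthma} supplies the constants $c_x \in \Cee^*$ and the identity
$$\sum_{x\in Z} c_x \lambda_x^k \, \partial(\varepsilon_x) = 0 \quad \text{in } H^{k+1}(\hY; \Omega^{n-k}_{\hY}(\log E)(-E)),$$
so applying the $\Cee$-linear map $\psi$ term by term yields the desired
$$\sum_{x\in Z} c_x \lambda_x^k \, \varphi(\varepsilon_x) = 0 \quad \text{in } H^{k+1}(\hY; \Omega^{n-k}_{\hY}),$$
which is the main assertion of the corollary.

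For the ``In particular'' clause, I would invoke the definition of a strong first order smoothing: such a $\theta \in \mathbb{T}^1_Y$ satisfies $\theta_x \notin \mathfrak{m}_x T^1_{Y,x}$ for every $x \in Z$, which under the chosen identifications $T^1_{Y,x}/\mathfrak{m}_x T^1_{Y,x} \cong \Cee$ translates precisely to $\lambda_x \in \Cee^*$ for every $x$. Substituting these nonzero values into the identity above yields the asserted nonlinear obstruction. Honestly, there is no serious obstacle to anticipate: the corollary amounts to a formal postcomposition, and all of the substantive work has already been accomplished in the local degree-$k$ analysis of Section~\ref{s2} (particularly Proposition~\ref{0.9}) and its globalization via Theorem~\ref{mainCD} and Theorem~\ref{mainthma}.
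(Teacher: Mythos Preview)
Your proposal is correct and matches the paper's own treatment: the corollary carries a \qed with no separate proof because it follows immediately from Theorem~\ref{mainthma} by postcomposing with the linear map $\psi\colon H^{k+1}(\hY;\Omega^{n-k}_{\hY}(\log E)(-E))\to H^{k+1}(\hY;\Omega^{n-k}_{\hY})$ and noting that $\varphi=\psi\circ\partial$. Your handling of the ``In particular'' clause via the definition of a strong first order smoothing is likewise exactly what is intended.
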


\begin{remark} (i) By Poincar\'e duality, the map $H^n(L) \to H^{n+1}(\hY)$ is the same as the map $H_{n-1}(L) \to H_{n-1}(\hY)$, which factors as $H_{n-1}(L) \to H_{n-1}(Y) \to H_{n-1}(\hY)$.  By Remark~\ref{duality}, we can identify $\partial \colon \Gr^{n-k}_FH^n(L)  \to H^{k+1}(\hY; \Omega^{n-k}_{\hY}(\log E)(-E))$ with the corresponding map
$$\Gr^{n-k}_FH_{n-1}(L)(-n) \to \Gr^{n-k}_FH_{n-1}(Y)(-n).$$
This gives an equivalent statement to Theorem~\ref{mainthma} which only involves $Y$, not the choice of a resolution. 

\smallskip
\noindent (ii) By Lemma~\ref{lemma3.3}, the map $H^{k+1}(\hY; \Omega^{n-k}_{\hY}(\log E)(-E)) \to H^{k+1}(\hY; \Omega^{n-k}_{\hY})$ is injective.
 Thus $\sum_{x\in Z} c_x\lambda_x^k\varphi(\varepsilon_x) =0$ $\iff$ $\sum_{x\in Z} c_x\lambda_x^k \partial(\varepsilon_x) =0$, so that Theorem~\ref{mainthma} and Corollary~\ref{last} contain the same information.
 \end{remark}
 
 \begin{remark} It is certainly possible for $\partial(\varepsilon_x) =0$. For example, suppose that  $\dim Y= 2k+1$ and the singularities of $Y$ are all $k$-liminal, i.e.\ ordinary double points. If $Y$ is a Calabi--Yau hypersurface in $\Pee^{2k+2}$ with just  a few singular points in general position, then they can be smoothed independently, i.e.\ for every $x\in Z$, there exists a $\theta \in \mathbb{T}^1_Y$ such that $\lambda_x\neq 0$ but $\lambda_{x'}= 0$ for all $x'\neq x$.  Then $\partial(\varepsilon_x) =0$ for every $x\in Z$.
  \end{remark}

  \begin{remark} As in Remark~\ref{1limFano}, we can also consider the Fano case, where $Y$ has isolated $k$-liminal weighted homogeneous hypersurface singularities and $\omega_Y^{-1}$ is ample.  In this case, the construction produces an obstruction to a strong first-order smoothing, namely  $\sum_{x\in Z} c_x\lambda_x^k \partial(\varepsilon_x) \in H^{k+1}(\hY; \Omega^{n-k}_{\hY}(\log E)(-E)\otimes \pi^*\omega_Y^{-1})$. By Serre duality, $H^{k+1}(\hY; \Omega^{n-k}_{\hY}(\log E)(-E)\otimes \pi^*\omega_Y^{-1})$ is dual to $H^{n-k-1}(\hY; \Omega^k_{\hY}(\log E) \otimes \pi^*\omega_Y)$. 
  
  In many reasonable cases, however, $H^{n-k-1}(\hY; \Omega^k_{\hY}(\log E)\otimes \pi^*\omega_Y)=0$. For example, if there exists a smooth Cartier divisor $H$ on $Y$, thus not passing through the singular points, such that $\omega_Y =\scrO_Y(-H)$, and in addition $H^{n-k-2}(H; \Omega^k_H) = 0$, then the argument of  Remark~\ref{1limFano} shows that 
  $$H^{n-k-1}(\hY; \Omega^k_{\hY}(\log E)\otimes \pi^*\omega_Y)=H^{n-k-1}(\hY; \Omega^k_{\hY}(\log E)\otimes \scrO_{\hY}(-H))= 0,$$
  where we identify the divisor $H$ on $Y$ with its preimage $\pi^*H$ on $\hY$.  For example, these hypotheses are satisfied if $Y$ is a hypersurface in $\Pee^{n+1}$ of degree $d\leq n+1$. However, as soon as $n=2k+1$ is odd and $n\ge 5$, there exist such hypersurfaces with only nodes as singularities (the $k$-liminal case with $k = \frac12(n-1)$) such that the map $\mathbb{T}^1_Y \to H^0(Y; T^1_Y)= \bigoplus_{x\in Y_{\text{\rm{sing}}}}H^0(T^1_{Y,x})/\mathfrak{m}_xH^0(T^1_{Y,x}) $ is not surjective (cf.\ for example \cite[Remark 4.11(iv)]{FL}). Thus, the obstructions to the surjectivity of the map $\mathbb{T}^1_Y \to H^0(Y; T^1_Y)$ are not detected by the nonlinear obstruction $\sum_{x\in Z} c_x\lambda_x^k \partial(\varepsilon_x)$.  Of course, a nodal hypersurface in $\Pee^{n+1}$ is smoothable, but the above examples show that, even in  the  Fano case,  the nodes cannot always be smoothed independently.
  \end{remark}
  
  \subsection{Geometry of a smoothing}\label{ss3.2}  We make the following assumption throughout this subsection (except for Remark~\ref{remark313} at the end):
  
  \begin{assumption}  $Y$ denotes a \emph{projective} variety, not necessarily satisfying $\omega^{-1}_Y$ ample or $\omega_Y \cong \scrO_Y$,  with only isolated lci singular points (not necessarily weighted homogeneous). Denote by $Z$ the singular locus of $Y$.  Let $f\colon \mathcal{Y} \to \Delta$  be a \emph{projective} smoothing of $Y$, i.e.\ $Y_0 = Y \cong f^{-1}(0)$ and the remaining fibers $Y_t = f^{-1}(t)$, $t\neq 0$, are smooth. For $x\in Z$,  let $L_x$ denote the link at $x $ and let $M_x$ denote the Milnor fiber  at $x$. Finally, let  $M =\bigcup_{x\in Z}M_x$ and $L= \bigcup_{x\in Z}L_x$.
  \end{assumption}

We have the Mayer--Vietoris sequence of mixed Hodge structures (where $H^i(Y_t)$ is given the limiting mixed Hodge structure): 
\begin{equation}\label{MVMilnor}
 \cdots \to H^{i-1}(M) \to  H^i(Y,Z) \to H^i(Y_t) \to H^i(M) \to \cdots .
 \end{equation}
In particular, just under the assumption that  $Y$ has isolated lci singularities, $H^i(Y,Z) \to H^i(Y_t)$ is an isomorphism except for the cases $i =n, n+1$.  There is a more precise result if we assume that  the singularities are $k$-Du Bois:

\begin{lemma}\label{first}  Suppose that all singular points of $Y$ are isolated lci $k$-Du Bois singularities. Then    
\begin{enumerate} 
\item[\rm(i)] $\Gr^p_FH^n(M_x)=0$ for $p\le k$ and   $\Gr^{n-p}_FH^n(M_x)=0$ for $p\le k-1$.
\item[\rm(ii)]  For all $i$, if  $p\le k$, then $\Gr^p_FH^i(Y_t) = \Gr^p_FH^i(Y)$ and 
 if  $p\le k-1$, then $\Gr^{n-p}_FH^i(Y_t) = \Gr^{n-p}_FH^i(Y)$.
\end{enumerate}   
\end{lemma}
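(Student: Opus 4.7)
The proof splits along the two items: part (i) is a local Hodge-theoretic calculation for the Milnor fiber of an isolated lci $k$-Du Bois singularity, and part (ii) will follow from (i) by feeding it into the Mayer--Vietoris sequence (\ref{MVMilnor}) and invoking strictness of morphisms of mixed Hodge structures.

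For part (i), the plan is to use the by-now standard characterization of $k$-Du Bois singularities in terms of the Hodge filtration on Milnor fiber cohomology, originally due to Mustata--Popa \cite{MP-rat} in the hypersurface case and extended to the isolated lci setting by Chen--Dirks--Mustata \cite{ChenDirksM} (compatible with the viewpoint of \cite{FL22c, FL22d}). Since $M_x$ is an isolated icis Milnor fiber, it has the homotopy type of a bouquet of $n$-spheres, so $\tilde H^i(M_x)=0$ for $i\ne n$ and all the Hodge-theoretic content is concentrated in degree $n$. The $k$-Du Bois hypothesis is then equivalent to $\Gr^p_F \tilde H^n(M_x)=0$ for $p\le k$, giving the first vanishing. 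For the dual vanishing $\Gr^{n-p}_F \tilde H^n(M_x) = 0$ with $p \le k-1$, I would invoke the implication that a $k$-Du Bois singularity is automatically $(k-1)$-rational (Definition~\ref{def0.3}, following \cite{FL22d, ChenDirksM}) together with the analogous Milnor-fiber characterization of $(k-1)$-rationality, which is precisely the vanishing of these complementary Hodge pieces.

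For part (ii), apply the exact functor $\Gr^p_F$ to the Mayer--Vietoris sequence (\ref{MVMilnor}), using strictness of MHS morphisms. Since the reduced cohomology of each $M_x$ vanishes outside degree $n$, we have $\Gr^p_F \tilde H^i(M) = 0$ for $i \ne n$, and by part (i) the same holds for $i = n$ in both of the ranges for $p$ stated in (ii). Hence the maps $\Gr^p_F H^i(Y,Z)\to \Gr^p_F H^i(Y_t)$ are isomorphisms for all $i\ge 1$, the case $i=0$ being trivial. Finally, from the long exact sequence of the pair $(Y,Z)$ with $Z$ a finite set we obtain $H^i(Y,Z)\cong H^i(Y)$ as mixed Hodge structures for $i\ge 2$, while for $i\le 1$ the discrepancy is carried by a subspace of pure Hodge type $(0,0)$ coming from $H^0(Z)$, which does not contribute to $\Gr^p_F$ for $p\ge 1$.

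The main obstacle is part (i): the two vanishings really require both the $k$-Du Bois hypothesis and its consequence $(k-1)$-rationality, and one must use the lci (not only hypersurface) formulation, which is what makes the recent work of Chen--Dirks--Mustata and of the authors of the present paper essential. Once (i) is in hand, part (ii) is a formal consequence of strictness applied to the Mayer--Vietoris sequence, together with the standard comparison between $H^i(Y,Z)$ and $H^i(Y)$.
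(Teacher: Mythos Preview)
Your proposal is correct and follows essentially the same route as the paper's (very terse) proof: part (i) is obtained by invoking the Milnor-fiber Hodge-theoretic characterizations of $k$-Du Bois and $(k-1)$-rational (the paper cites \cite[\S6]{FL22d}, you cite the closely related \cite{MP-rat}, \cite{ChenDirksM}), and part (ii) follows by applying $\Gr^p_F$ to the Mayer--Vietoris sequence (\ref{MVMilnor}) and using strictness. Your added remark on passing from $H^i(Y,Z)$ to $H^i(Y)$ via the pair sequence makes explicit a step the paper leaves implicit.
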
 
\begin{proof} The first statement follows from  \cite[\S6]{FL22d} and the second  from (i), (\ref{MVMilnor}),  and strictness. 
\end{proof} 

\begin{remark} Under the assumption of  isolated lci $k$-Du Bois singularities as above (or more generally  isolated lci $(k-1)$-rational singularities), the above implies that $\Gr^W_{2n-a}H^n(Y_t) = 0$   for all $a\le 2k-1$, and hence that, for all $a\le 2k-1$, $\Gr^W_aH^n(Y_t) = 0$  as well. Thus, if $T$ is the monodromy operator acting on $H^n(Y_t)$ and $N = \log T^m$ for a sufficiently divisible power of $T$, then $N^{n-2k+1} =0$. 
\end{remark}

 Under the assumption of  $k$-liminal singularities, the proof of  Lemma~\ref{first} and \cite[Corollary 6.14]{FL22d}  give the following:

\begin{lemma}\label{second} In the above notation, if all singular points of $Y$ are isolated $k$-liminal hypersurface singularities, then 
$$\Gr^{n-k}_FH^n(M_x) \cong \Gr^{n-k}_FH^n(L_x) =\Cee \cdot \varepsilon_x$$
for some nonzero $\varepsilon_x \in \Gr^{n-k}_FH^n(L_x)$.
 Moreover,  there is an exact sequence
$$0 \to \Gr^{n-k}_FH^n(Y) \to \Gr^{n-k}_FH^n(Y_t)\to \bigoplus_{x\in Z}\Cee \cdot \varepsilon_x \xrightarrow{\psi}   \Gr^{n-k}_FH^{n+1}(Y) \to \Gr^{n-k}_FH^{n+1}(Y_t)\to 0.\qed$$
\end{lemma}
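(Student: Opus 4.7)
The plan is to apply $\Gr^{n-k}_F$ to the Mayer--Vietoris sequence (\ref{MVMilnor}) and exploit strictness of morphisms of mixed Hodge structures, so that taking graded pieces preserves exactness. All the genuinely non-formal input will enter through the identification of the top Hodge piece on the Milnor fiber with that on the link under the $k$-liminal hypothesis.

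First I would collapse (\ref{MVMilnor}) to a five-term exact sequence. By Milnor's fibration theorem, each $M_x$ is homotopy equivalent to a bouquet of $n$-spheres, so $\tilde H^i(M_x)=0$ for $i\ne n$. With $n\ge 4$, this gives $H^{n-1}(M)=H^{n+1}(M)=0$, reducing (\ref{MVMilnor}) to
$$0 \to H^n(Y,Z) \to H^n(Y_t) \to H^n(M) \to H^{n+1}(Y,Z) \to H^{n+1}(Y_t) \to 0.$$
Since $Z$ is zero-dimensional, the long exact sequence of the pair gives $H^i(Y,Z)\cong H^i(Y)$ as MHS for $i\ge 2$, which accounts for the outer four terms after applying $\Gr^{n-k}_F$.

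To identify the middle term, I would appeal componentwise to \cite[Corollary 6.14]{FL22d}. For a $k$-liminal isolated hypersurface singularity this yields
$$\Gr^{n-k}_F H^n(M_x) \cong \Gr^{n-k}_F H^n(L_x),$$
and Theorem~\ref{introlink} identifies the common space as $\Cee\cdot\varepsilon_x$. This proves the first displayed identity of the lemma and determines $\Gr^{n-k}_F H^n(M)=\bigoplus_{x\in Z}\Cee\cdot\varepsilon_x$. Applying the exact functor $\Gr^{n-k}_F$ to the five-term sequence above then gives the desired exact sequence.

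The main subtlety is the input from \cite[Corollary 6.14]{FL22d}: verifying that the top surviving Hodge piece of $H^n(M_x)$ coincides with the corresponding piece of $H^n(L_x)$ under the $k$-liminal condition. Hodge-theoretically, this is really the content of the $k$-liminal property --- the $k$-Du Bois hypothesis forces vanishing of $\Gr^p_F H^n$ at levels $p\ge n-k+1$ for both the link and the Milnor fiber (cf.\ Lemma~\ref{first}(i)), so $p=n-k$ becomes the common top nonzero Hodge piece, and the failure of $k$-rationality forces it to be nonzero. Carefully tracking the MHS structure in the nearby cycles formalism is the only nontrivial step; once it is in hand, the rest is formal bookkeeping with the Mayer--Vietoris sequence and strictness.
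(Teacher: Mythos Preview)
Your proof is correct and follows the same approach the paper indicates: apply $\Gr^{n-k}_F$ to the Mayer--Vietoris sequence (\ref{MVMilnor}), use the vanishing of $\tilde H^i(M)$ for $i\neq n$ together with $H^i(Y,Z)\cong H^i(Y)$ for $i\ge 2$ to get the five-term sequence, invoke \cite[Corollary~6.14]{FL22d} for the identification $\Gr^{n-k}_FH^n(M_x)\cong\Gr^{n-k}_FH^n(L_x)$, and conclude by strictness. One cosmetic remark: you only need $n\ge 2$ (not $n\ge 4$) for $H^{n-1}(M)=0$, and the standing hypothesis $n\ge 3$ from Definition~\ref{def0.3} already suffices.
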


We also have the natural map $\varphi\colon \Gr^{n-k}_FH^n(L) =\bigoplus_{x\in Z}\Cee \cdot \varepsilon_x\to \Gr^{n-k}_FH^{n+1}(\hY)$, and 
there is a commutative diagram
$$\begin{CD}
\Gr^{n-k}_FH^n(M) @>{\cong}>> \Gr^{n-k}_FH^n(L) \\
@V{\psi}VV @VV{\varphi}V \\
\Gr^{n-k}_FH^{n+1}(Y) @>>> \Gr^{n-k}_FH^{n+1}(\hY).
\end{CD}$$
By   Lemma~\ref{lemma3.3},   $\Gr^{n-k}_FH^{n+1}(Y) \to \Gr^{n-k}_FH^{n+1}(\hY)$ is injective. 
Thus the dimension of the kernel and image of the map $\psi \colon  \bigoplus_{x\in Z}\Cee \cdot \varepsilon_x \to  \Gr^{n-k}_FH^{n+1}(Y)$ are the same as the dimensions of the kernel and image of the  map $\varphi \colon  \bigoplus_{x\in Z}\Cee \cdot \varepsilon_x \to  \Gr^{n-k}_FH^{n+1}(\hY)$. Then we have the following generalization of  \cite[Lemma 8.1(2)]{FriedmanSurvey}:

\begin{corollary}\label{third} Still assuming that  all singular points of $Y$ are isolated $k$-liminal hypersurface singularities, in the above notation, let $s' = \dim \Ker \{\varphi  \colon \bigoplus_{x\in Z}\Cee \cdot \varepsilon_x \to  \Gr^{n-k}_FH^{n+1}(\hY)\}$ and let $s'' =\#(Z) - s'= \dim \im \varphi$. Then:
\begin{enumerate}
\item[\rm(i)]  $h^{n-k, k}(Y_t) = h^{k, n-k}(Y_t)= \dim \Gr^{n-k}_FH^n(Y_t) = \dim \Gr^{n-k}_FH^n(Y) +s'$.
\item[\rm(ii)] $\dim \Gr^k_FH^n(Y)  = \dim \Gr^{n-k}_FH^n(Y) +s'$.
\item[\rm(iii)] $h^{n-k,k+1}(Y_t)= \dim \Gr^{n-k}_FH^{n+1}(Y_t) = \dim \Gr^{n-k}_FH^{n+1}(Y) -s''$.  
\end{enumerate}
\end{corollary}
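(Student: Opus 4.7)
The plan is to read off statements (i) and (iii) directly from the five-term exact sequence of Lemma~\ref{second}, and then to deduce (ii) by combining (i) with Hodge symmetry on the smooth fiber $Y_t$ together with the identification of low Hodge graded pieces provided by Lemma~\ref{first}(ii).

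First I would use the commutative square relating $\psi$ to $\varphi$, combined with the injectivity of $\Gr^{n-k}_FH^{n+1}(Y) \to \Gr^{n-k}_FH^{n+1}(\hY)$ supplied by Lemma~\ref{lemma3.3}, to conclude that $\dim \Ker \psi = s'$ and $\dim \im \psi = s''$. Splitting the five-term sequence of Lemma~\ref{second} into the two short exact pieces
$$0 \to \Gr^{n-k}_FH^n(Y) \to \Gr^{n-k}_FH^n(Y_t) \to \Ker\psi \to 0,$$
$$0 \to \im\psi \to \Gr^{n-k}_FH^{n+1}(Y) \to \Gr^{n-k}_FH^{n+1}(Y_t) \to 0,$$
dimension counting immediately gives the formulas in (i) and (iii). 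The remaining equality $h^{n-k,k}(Y_t) = h^{k,n-k}(Y_t)$ in (i) is just Hodge symmetry on the smooth projective $Y_t$.

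For (ii), the key additional input is Lemma~\ref{first}(ii) applied with $p = k$, which yields $\Gr^k_FH^n(Y) \cong \Gr^k_FH^n(Y_t)$. Combining this with Hodge symmetry on $Y_t$ and the formula from (i) gives
$$\dim \Gr^k_FH^n(Y) = h^{k,n-k}(Y_t) = h^{n-k,k}(Y_t) = \dim \Gr^{n-k}_FH^n(Y) + s',$$
which is exactly (ii). The argument is essentially formal once Lemma~\ref{second} is in place; the only step requiring genuine care is checking that $\psi$ and $\varphi$ have the same kernel and image dimensions, for which Lemma~\ref{lemma3.3} (injectivity on $H^{n+1}$) is the essential ingredient. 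Beyond this I anticipate no serious obstacle.
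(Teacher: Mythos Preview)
Your proposal is correct and follows essentially the same route as the paper: the paper likewise reads off (i) and (iii) from the exact sequence of Lemma~\ref{second} (after identifying $\Ker\psi$ and $\im\psi$ with those of $\varphi$ via the discussion preceding the corollary, i.e.\ Lemma~\ref{lemma3.3}), and then derives (ii) from Lemma~\ref{first}(ii) with $p=k$, Hodge symmetry on $Y_t$, and (i). You have simply spelled out the splitting of the five-term sequence more explicitly than the paper does.
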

\begin{proof}  (i) and (iii) follow from the exact sequence in Lemma~\ref{second}. As for (ii), by Lemma~\ref{first}(ii), 
$$\dim \Gr^k_FH^n(Y)  =  \dim \Gr^k_FH^n(Y_t)  =\dim \Gr^{n-k}_FH^n(Y_t) = \dim \Gr^{n-k}_FH^n(Y) +s',$$
using (i). 
\end{proof}

\begin{remark}\label{remark313} 
Let  $Y$ be a compact analytic  threefold with    all singular points   $1$-liminal, hence  ordinary double points.   Assume in addition   that $h^1(\scrO_Y) = h^2(\scrO_Y)=0$. For a canonical Calabi--Yau threefold, since $\omega_Y\cong \scrO_Y$, this is a natural assumption to make: If  $h^1(\scrO_Y) \neq 0$, $Y$ is smooth by a result of Kawamata \cite[Theorem 8.3]{KawamataFiber}, and  $h^1(\scrO_Y) = 0$ $\iff$ $h^2(\scrO_Y) =h^2(\omega_Y) =0$ by Serre duality. Let $Y'$ be   a small resolution of $Y$, and let $[C_x]\in H^2(Y'; \Omega^2_{Y'})=H^4(Y')$ be the fundamental class of the exceptional curve over the point $x\in Z$. Setting $\psi \colon \Cee^Z \to H^2(Y'; \Omega^2_{Y'})$ to be the natural map $(a_x) \mapsto \sum_{x\in Z}a_x[C_x]$, let $s'=\dim \Ker \psi$ and $s''=\dim \im \psi$. Then  arguments similar to those above show that 
$$ b_4(Y) = b_4(Y') = b_2(Y') = b_2(Y) + s''.$$  Moreover, if $Y$ is smoothable and $Y_t$ denotes a general smoothing , then:
\begin{align*}
 b_2(Y_t) &= b_2(Y) = b_2(Y') - s'';\\ 
 b_3(Y_t) &= b_3(Y') + 2s'. 
\end{align*}
In particular, if $Y$ is a $1$-liminal canonical   Calabi--Yau threefold and   $\psi=0$, or equivalently $s'' =0$ in the above notation, i.e.\ $Y$ is $\Q$-factorial,  then $Y$ is smoothable by Theorem~\ref{dim3criterion} and the Kawamata--Ran--Tian theorem, and the above formulas hold for $Y_t$. 
\end{remark}

\bibliography{cyref}
\end{document}